\definecolor{myblue}{rgb}{.1, .4, .8}
\definecolor{myred}{rgb}{.8, .1, .1}
\definecolor{myyellow}{RGB}{246,235, 24}
\definecolor{mylime}{RGB}{150,201, 61}
\definecolor{mypurple}{RGB}{186,83, 159}
\definecolor{myorange}{RGB}{246,134, 59}
\definecolor{matlab1}{rgb}{0, 0.4470, 0.7410}
\definecolor{matlab2}{rgb}{0.8500, 0.3250, 0.0980}
\definecolor{matlab3}{rgb}{0.9290, 0.6940, 0.1250}
\definecolor{matlab4}{rgb}{0.4940, 0.1840, 0.5560}
\crefname{remark}{Remark}{Remark}
\crefname{Example}{Example}{Examples}
\crefname{hypothesis}{Hypothesis}{Hypotheses}
\crefname{assumption}{Assumption}{Assumption}
\title{The Geometry of the Painlev\'e paradox\thanks{Submitted to the editors {26/10/2021}.
\funding{{This work was funded by the UK Engineering and Physical Sciences Research Council (EPSRC) as part of the first author's PhD Studentship.}}}}
\author{
Noah Cheesman\thanks{Department of Engineering Mathematics, University of Bristol, Bristol BS8 1TW, UK
(\email{noah.cheesman@bristol.ac.uk}, \email{s.j.hogan@bristol.ac.uk}).}
 \and S.J. Hogan\footnotemark[2]
\and Kristian Uldall Kristiansen\thanks{Department of Applied Mathematics and Computer Science, Technical University of Denmark, 2800 Kgs. Lyngby, Denmark  
  (\email{krkri@dtu.dk}).}}
\newcommand{\diff}[2]{\frac{\mathrm{d}{#1}}{\mathrm{d}{#2}}}				% first derivative
\newcommand{\pdiff}[2]{\frac{\partial{#1}}{\partial{#2}}}					% first partial
\renewcommand{\vec}[1]{\boldsymbol{#1}}										%vector
\newcommand{\sign}{\mathrm{sign}}
\newcommand{\tr}{\mathrm{tr}}
\renewcommand{\vec}[1]{{#1}}
\renewcommand{\vec}[1]{\mathbf{#1}}
\newcommand{\mat}[1]{\mathbf{#1}}
\renewcommand{\mat}[1]{\underline{\vec{#1}}}
\renewcommand{\mat}[1]{{\underline{\underline{\mathrm{\smash{#1}}}}}}
\renewcommand{\vec}[1]{\underline{\smash{#1}}}
\newcommandx{\john}[2][1=]{\todo[linecolor=red,backgroundcolor=red!25,bordercolor=red,#1]{#2}}
\newcommandx{\johnin}[2][1=]{\todo[inline,linecolor=red,backgroundcolor=red!25,bordercolor=red,#1]{#2}}
\newcommandx{\noah}[2][1=]{\todo[linecolor=blue,backgroundcolor=blue!25,bordercolor=blue,#1]{#2}}
\newcommandx{\noahin}[2][1=]{\todo[inline,linecolor=blue,backgroundcolor=blue!25,bordercolor=blue,#1]{#2}}
\newcommandx{\kristian}[2][1=]{\todo[linecolor=green,backgroundcolor=green!25,bordercolor=green,#1]{#2}}
\newcommandx{\kristianin}[2][1=]{\todo[inline,linecolor=green,backgroundcolor=green!25,bordercolor=green,#1]{#2}}
\newlist{thmlist}{enumerate*}{1}
\setlist[thmlist]{label=\rm{(\arabic*)}}
\crefname{thmlisti}{thm}{thms}
\definecolor{myblue}{rgb}{.1, .4, .8}
\definecolor{myred}{rgb}{.8, .1, .1}
\definecolor{myyellow}{rgb}{0.9647,0.9216,0.0941}
\definecolor{mylime}{rgb}{0.5882,0.7882,0.2392}
\definecolor{mypurple}{rgb}{0.7294,0.3255,0.6235}
\definecolor{myorange}{rgb}{0.9647,0.5255,0.2314}
\definecolor{md1}{rgb}{0, 0.4470, 0.7410}
\definecolor{md2}{rgb}{0.8500, 0.3250, 0.0980}
\definecolor{md3}{rgb}{0.9290, 0.6940, 0.1250}
\definecolor{md4}{rgb}{0.4940, 0.1840, 0.5560}
\definecolor{md5}{rgb}{0.4660, 0.6740, 0.1880}
\definecolor{md6}{rgb}{0.3010, 0.7450, 0.9330}
\definecolor{md7}{rgb}{0.6350, 0.0780, 0.1840}
\newcommand*\circled[1]{\tikz[baseline=(char.base)]{
            \node[shape=circle,draw,inner sep=2pt] (char) {#1};}}
\DeclareSymbolFont{matha}{OML}{txmi}{m}{it}
\DeclareMathSymbol{\varv}{\mathord}{matha}{118}
\renewcommand{\vec}[1]{\boldsymbol{\mathbf{#1}}}
\renewcommand{\mat}[1]{\underline{\vec{#1}}}	
\renewcommand{\diff}[2]{\frac{\mathrm{d}{#1}}{\mathrm{d}{#2}}}
\renewcommand{\pdiff}[2]{\frac{\partial{#1}}{\partial{#2}}}
\renewcommand{\sign}[1]{\mathrm{sign}{#1}}
\crefname{section}{section}{sections}
\crefname{subsection}{subsection}{subsections}
\Crefname{section}{Section}{Sections}
\Crefname{subsection}{Subsection}{Subsections}
\Crefname{figure}{Figure}{Figures}
\newlist{mycases}{enumerate}{3}
\setlist[mycases,1]{label=\textbf{Case~\arabic*},
                   ref  =\arabic*}
\setlist[mycases,2]{label=\textbf{(\alph*)},
                   ref  =\themyenumi\textbf{(\alph*)}}
\setlist[mycases,3]{label=\bfseries(\roman*),
                   ref  =\themyenumii\textbf{.(\roman*)}}
\crefname{mycasesi}{Case}{Cases}
\crefname{mycasesii}{item}{items}
\crefname{mycasesiii}{item}{items}
\newlist{mymechs}{enumerate}{3}
\setlist[mymechs,1]{label=\textbullet,
                   ref  =\Roman*}
\setlist[mymechs,2]{label=--,
                   ref  =\themymechsi\textbf{(\alph*)}}
\setlist[mymechs,3]{label=\bfseries(\roman*),
                   ref  =\themymechsii\textbf{.(\roman*)}}
\crefname{mymechsi}{Mechanism}{Mechanisms}
\crefname{mymechsii}{item}{items}
\crefname{mymechsiii}{item}{items}
\pgfplotsset{compat=1.16}
\begin{document}

\maketitle

% REQUIRED
\begin{abstract}
Painlev\'e showed that there can be inconsistency and indeterminacy in solutions to the equations of motion of a $2D$ rigid body moving on a sufficiently rough surface.
%when the coefficient of dry (Coulomb) friction $\mu$ exceeds a certain critical value $\mu_\mathrm{P}$.
%A paradox happens because the friction-induced moment on the rigid body in contact with the rough surface can be large enough to drive it into the surface, which is impossible.
%Painlev\'e studied the $2D$ motion of a slender rod slipping along a rough surface. 
The study of Painlev\'e paradoxes in $3D$ has received far less attention. In this paper, we highlight the pivotal role in the dynamics of the azimuthal angular velocity $\Psi$ by proving the existence of three critical values of $\Psi$, one of which occurs independently of any paradox. We show that the $2D$ problem is highly singular and uncover a rich geometry in the $3D$ problem which we use to explain recent numerical results.
\end{abstract}

% REQUIRED
\begin{keywords}
Painlev\'e paradox, geometry, mechanics
\end{keywords}

% REQUIRED
\begin{AMS}
  { 37N15, 70E18, 74H20, 74H25
}\end{AMS}

\section{Introduction}
Painlev\'{e} \cite{Painleve1895, Painleve1905a, Painleve1905b} showed that there can be \textit{inconsistency} (non-existence) and \textit{indeterminacy}  (non-uniqueness) in solutions to the equations of motion of a rigid body moving along a rough surface, known as \textit{Painlev\'{e} paradoxes}, when the coefficient of dry (Coulomb) friction $\mu$ exceeds a certain critical value $\mu_\mathrm{P}$.
%A paradox occurs because the friction-induced moment on that part of the rigid body in contact with the rough surface can be large enough to drive the rod \textit{into} the rigid surface, which is impossible.
A paradox occurs because the moment on the rigid body caused by friction can be large enough to drive the body \textit{into} the rigid surface, which is impossible.
Painlev\'{e} \cite{Painleve1905a, Painleve1905b} studied planar ($2D$) motion of a slender rod slipping along a rough surface, where $\mu_\mathrm{P} = \frac{4}{3}$ if the rod is uniform.
Related problems in statics had already been found in 1872 by Jellet \cite{Jellet72}, but the discovery by Painlev\'e led to celebrated discussion about the limits of rigid body theory \cite{Lecornu1905a, Lecornu1905b, Sparre1905, Klein1909, Mises1909, Hamel1909, Prandtl1909,Delassus1920, Delassus1923, Beghin1923, Beghin1924}, especially as the critical value $\mu=\mu_\mathrm{P}$ was so large. 

Interest in Painlev\'e paradoxes has been revived more recently, where they have been shown to occur in many important engineering systems, where $\mu_\mathrm{P}$, which is a function of system parameters, can be considerably lower \cite{WilmsCohen1981, Lotstedt1981, Ivanov1986, Lesuanan1990, NeimarkFufayev1995, WilmsCohen1997,Stewart1997}.

The theoretical study of Painlev\'e paradoxes in $2D$ received a great boost with the work by G\'enot and Brogliato \cite{Genot1999}. 
They proved that the rod cannot reach an inconsistent state when slipping, unless the free acceleration at the rod tip vanishes at the same {\it critical point} in phase space. This work led to further research \cite{LeineBrogliatoNijmeijer2002, Ivanov03, Liu2007, ShenStronge2011, Or2012, Or2014, Varkonyi15, Varkonyi2017, Burns2017, Elkaranshawy17,NordVarChamp18}. Experimental evidence of a Painlev\'e paradox has been found in a robotic system \cite{ZhaoLiuMaChen2008}. 

In contrast, the study of Painlev\'e paradoxes in $3D$, where a rigid body moves over a rough surface, has received far less attention. The first paper to explicitly consider such a case appears to be that of Zhao {\emph{et al.}} \cite{Zhao2008}. These authors derived the governing equations using energy methods and confirmed that Painlev\'e paradoxes could occur. Shen \cite{Shen2015} considered a $3D$ elastic rod in rectilinear motion. This version of the problem allows for elastic waves in the rod, whilst still keeping the surface rigid. Champneys and V{\'{a}}rkonyi \cite{Champneys2016} carried out numerical computations of the equations derived by \cite{Zhao2008} and showed that it was possible to enter the inconsistent region from slipping, unlike the $2D$ case \cite{Genot1999}. 

So it is natural to ask the question: how does the planar $2D$ problem, with its critical point and the impossibility of reaching an inconsistent state through slipping, fit into the $3D$ case, where reaching inconsistency through slipping seems to be the default? This paper is organised as follows. In \cref{sec:painleve3D}, we derive the governing equations of a rigid body moving with one point in contact with a rough surface, using force and torque balances. %, in contrast to the energy method used in \cite{Zhao2008}.

In \cref{sec:constraint}, we show where a paradox can occur and, perhaps more importantly, how to avoid one. We highlight the pivotal role of the azimuthal angular velocity $\Psi$. It turns out that there are three critical values of $\Psi$. One critical value $\Psi_\mathrm{L}$ occurs independently of any paradox. For $|\Psi|>\Psi_\mathrm{L}$, the rod can lift off the surface in the absence of any motion in the polar direction of the rod. The other two critical values of $\Psi$ occur only in the presence of a paradox.

In $3D$, the vertical acceleration of the rod is identically zero on the \textit{G\'enot-Brogliato} (or \textit{GB}) manifold \cite{Varkonyi18}. In \cref{sec:slip}, we focus on the slipping dynamics near the GB manifold. In $3D$, the default motion from slipping is into the inconsistent region, with bifurcations in the dynamics occurring around the GB manifold. In \cref{sec:alltogether}, we bring together results from the previous sections to show how the kinematics of the formation of the GB manifold interacts with the dynamics to give a rich geometry of the Painlev\'e paradox in $3D$ and that the $2D$ problem is highly singular.

\section{The \texorpdfstring{Painlev\'{e}}{Painlevé} problem in \texorpdfstring{$3D$}{3D}}
\label{sec:painleve3D}
We consider a slender, rigid rod $AB$ of mass $m$ slipping on a planar rough horizontal surface, as shown in \cref{fig:me2}.

\begin{figure}[!htbp]
    \centering
    \begin{overpic}[tics=5]{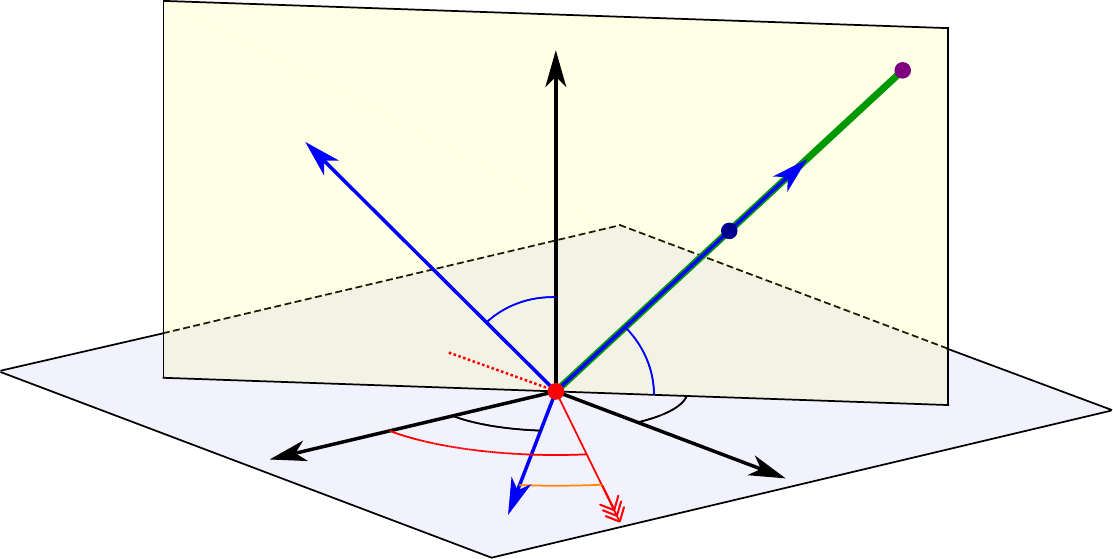}
        \put(25,11){$\vec{I}$}
        \put(68,10){$\vec{J}$}
        \put(46,44){$\vec{K}$}
        \put(44,3){\color{blue}{$\vec{i}$}}
        \put(70,36){\color{blue}{$\vec{j}$}}
        \put(27,32){\color{blue}{$\vec{k}$}}
        \put(40,7){\color{red}{$\beta$}}
        \put(49.5,4){\color[RGB]{255,134,0}{$\varphi$}}
        \put(61.5,12){$\psi$}
        \put(46,12){$\psi$}
        \put(59,17){\color{blue}{$\theta$}}
        \put(45,24){\color{blue}{$\theta$}}
        \put(57,5){\color{red}{$\eta$}}
        \put(26,18){\color{red}{$A:(x,y,0)$}}
        \put(67,28){\color[cmyk]{1,1,0,0.45}{$S:(X,Y,Z)$}}
        \put(76,44){\color[cmyk]{0,1,0,0.5}{$B$}}
\end{overpic}
    \caption{A slender rod $AB$ (shown in green) of mass $m$ moving on a planar rough surface at speed $\eta \ge 0$. Frame of reference $(\vec{I}, \vec{J}, \vec{K})$ is fixed to the surface and $(\vec{i}, \vec{j}, \vec{k})$ is fixed to the body. The distance between the point of contact $A$ and the centre of mass $S$ is given by $l$. \cite{Zhao2008}.\label{fig:me2}}
\end{figure}

An inertial frame fixed to the surface has Cartesian axes $(\vec{I}, \vec{J}, \vec{K})$. 
%Note that $\vec{i} \cdot \vec{k}_G=0$, that is $\vec{i}$ is tangent to the plane $\vec{i}_G,\vec{j}_G$ and the $\vec{j}$ axis coincides with the slender rod. 
The centre of mass of the rod $S$, which is at a distance $l$ from the tip $A$ in contact with the surface, has coordinates $(X, Y, Z)$ in the inertial frame. Axes fixed to the rod, and parallel to its principal axes, are denoted by $(\vec{i}, \vec{j}, \vec{k})$. Point $A$ has coordinates $(x,y,0)$ in the inertial frame.  

The rod is inclined at an angle $\theta$ to the horizontal, the \textit{polar angle} between the $\vec{k}$ axis and the $\vec{K}$ axis. The \textit{azimuthal angle} between the vertical plane containing the rod and the $\vec{J}$ axis is denoted by $\psi$. The rod moves on the rough surface with a variable speed $\eta$ and at an angle $\beta$ to the $\vec{I}$ axis - an angle $\varphi=\beta-\psi$ to the $\vec{i}$ axis. We refer to $\beta$ as the \textit{slip angle} (or \textit{heading}) and $\varphi$ as the \textit{relative slip angle} (or \textit{relative heading}). We exclude the cases of the vertical rod and horizontal rod. So we take $\theta\in(0,\pi/2)$, $\psi\in[-\pi,\pi)$, $\beta\in[-\pi,\pi)$ and $\varphi \in (-\pi,0]$. 
%When $\varphi=-\frac{\pi}{2}$, we have planar motion of the rod.

Point $A$ moves with velocity $(u, v, 0)$ where 
\begin{align}
    \label{eq:uv}
    \begin{split}
        \dot{x} = u & = \eta \cos \beta, \\
        \dot{y} = v & = \eta \sin \beta
    \end{split}
\end{align}
where a dot denotes differentiation with respect to time, and $\eta =\sqrt{u^2 + v^2}$.

\subsection{Equations of motion of the slender rod}
\label{subsec:eqmotion}
We derive the equations of motion of the slender rod as it moves in contact with the rough surface, using force and torque balances, in contrast to the energy method used in \cite{Zhao2008}. The moment of inertia tensor $\mat{I}$ of the rod with respect to its principal axes is given by
\begin{equation}
    \label{eq:inertia}
    \mat{I}=
    %\begin{pmatrix}
        % I_{11}       & I_{12} & I_{13} \\
        % I_{21}       & I_{22} & I_{23} \\
        % I_{31}       & I_{32} & I_{33} 
    %\end{pmatrix}=
    \begin{pmatrix}
        I_{0} & 0 & 0      \\
        0     & 0 & 0      \\
        0     & 0 & I_{0} 
    \end{pmatrix}
\end{equation}
where $I_0 = \frac{1}{3}ml^2$ for a uniform rod. The matrix $\mat{I}$ is singular due to the assumed slenderness of the rod.

% The angular velocity $\vec{\Omega}$ of the rod in the same reference frame is given by 
% \begin{equation}
% 	\vec{\Omega}=(\Omega_1,\Omega_2,\Omega_3)^\intercal \\
% \end{equation}
% where
% \begin{equation}
%     \label{eq:angvel}
% 	\Omega_1=\dot{\theta},\qquad
% 	\Omega_2=\dot{\psi}\, \sin \theta,\qquad
% 	\Omega_3=\dot{\psi}\, \cos \theta.
% \end{equation}

The angular velocity $\vec{\Omega}$ of the rod is given by 
\begin{align}
	\vec{\Omega}&=\dot{\theta}\,\vec{i}+\dot{\psi}\,\vec{K}
\intertext{or}
	\vec{\Omega}&=\dot{\theta}\,\vec{i}+\dot{\psi}\sin{\theta}\,\vec{j}+\dot{\psi}\cos{\theta}\,\vec{k}.
    \label{eq:angvel}
\end{align}

The angular momentum in the rod frame is given by $\vec{H}=\mat{I}\vec{\omega}$, and its rate of change is given by $\dot{\vec{H}} = \mat{I} \, \dot{\vec{\Omega}} + \vec{\Omega} \times (\mat{I} \, \vec{\Omega})$. Using \cref{eq:inertia,eq:angvel}, we find
\begin{equation}\label{eq:moments}
    \dot{\vec{H}}=
    \begin{pmatrix}
        I_{0}(\ddot{\theta}+\dot{\psi}^2 \sin\theta\cos\theta  )\\
        0          \\
        I_{0}(\ddot{\psi}\cos\theta-2\dot{\theta}\dot{\psi}\sin\theta   )
    \end{pmatrix}
\end{equation}

Euler's rotation equations state that this rate of change of angular momentum $\dot{\vec{H}}$ is equal to the applied torque $\vec{G}$ of the forces acting on the rod. The force on the rod in the inertial frame is denoted by
$\vec{F}=(F_x, F_y, F_z)^\mathsf{T}$. Then $\vec{G} = \vec{r}\times (\mat{T} \, \vec{F})$, where $\vec{r}=(0,-l,0)^\mathsf{T}$ is the position vector of the contact point $A$ relative to $S$ in the body frame, and $\mat{T}$ is the rotation matrix from the inertial frame to the body frame given by
\begin{align}
    \mat{T}&=\left[\begin{pmatrix}
    \cos\psi            & -\sin\psi             & 0 \\
    \sin\psi            & \cos\psi              & 0 \\
    0                   & 0                     & 1 \\
\end{pmatrix}\begin{pmatrix}
    1                   & 0                     & 0 \\
    0                   & \cos\theta            & -\sin\theta \\
    0                   & \sin\theta            & \cos\theta \\
\end{pmatrix}\right]^\intercal\label{eq:rotmatrix}.
%,\nonumber\\
%    \mat{T}&=\begin{pmatrix}
%    \cos\psi            & \sin\psi             & 0 \\
%    -\sin\psi\cos\theta & \cos\psi\cos\theta   & \sin\theta \\
%    \sin\psi\sin\theta  & -\cos\psi\sin\theta  & \cos\theta \\
%\end{pmatrix}
\end{align}
So we find
\begin{equation}
    \label{eq:torques}
    \vec{G}=\vec{r} \times (\mat{T} \, \vec{F})=
    \begin{pmatrix}
    -l(\sin\psi\sin\theta F_x -\cos\psi\sin\theta F_y +\cos\theta F_z)\\
    0\\
    l(\cos\psi F_x +\sin\psi F_y)
    \end{pmatrix},
\end{equation}
which is zero in the $\vec{j}$ component due to the slenderness of the rod.

Equating \cref{eq:moments} and \cref{eq:torques}, we find 
\begin{align}
    \label{eq:rotode}
    \begin{split}
        \ddot{\theta} & = -\frac{l}{I_0} \left (\sin\psi\sin\theta F_x - \cos\psi\sin\theta F_y+\cos\theta F_z \right ) - \dot{\psi}^2\sin\theta\cos\theta,\\
        \ddot{\psi} & =\frac{l}{I_0 \cos\theta} \left ( \cos\psi F_x +\sin\psi F_y \right ) + 2 \dot{\psi}\dot{\theta}\tan\theta.
    \end{split}
\end{align}
For the case of a uniform rod, \cref{eq:rotode} are identical to \cite[eq. (12)]{Zhao2008}, which were derived directly from the energy equations.

The rectilinear dynamics are determined as follows. Using Newton's second law in the inertial frame, we have for the centre of mass $S$ of the rod
\begin{equation}
    \label{eq:linode}
    m \ddot{X}=F_x,\qquad
    m \ddot{Y}=F_y,\qquad
    m \ddot{Z}=F_z-mg.
\end{equation}
We are interested in the dynamics of the rod tip $A$, whose coordinates are given by
\begin{equation}\label{eq:tipfromcentre}
    x=X+l\cos{\theta}\sin{\psi},\qquad
    y=Y-l\cos{\theta}\cos{\psi},\qquad
    z=Z-l\sin{\theta}.
\end{equation}
Differentiating \cref{eq:tipfromcentre} twice with respect to time  and substituting the results from \cref{eq:rotode,eq:linode} gives the following
\begin{align}
    \begin{split}\label{eq:linode2}
        \ddot{x}=&\frac{ml^2}{I_0} \left[  { \left(- \sin^2 \psi  \cos^{2} \theta  +\frac{I_{0}}{m l^2}+1 \right) }\frac{F_{x}}{m}
        +   { \left(\cos^2 \theta \sin  \psi    \cos  \psi   \right)} \frac{F_{y}}{m} 
        +   { \left(\sin \theta \cos  \theta    \sin  \psi   \right)} \frac{F_{z}}{m} \right. \\
        &\qquad\quad+ \left.\frac{I_0}{ml}\left( -\, \dot{\psi} ^{2}  \cos^{3}  \theta   \sin  \psi - \dot{\theta} ^{2}\cos\theta  \sin  
        \psi \right)\right],\\
        \ddot{y}=&\frac{ml^2}{I_0} \left[  { \left( \sin \psi  \cos \psi \cos^2 \theta   \right) }\frac{F_{x}}{m}
        +   { \left(- \cos^2 \psi  \cos^{2} \theta  +\frac{I_{0}}{m l^2}+1   \right)} \frac{F_{y}}{m} 
        -   { \left(\sin \theta \cos  \theta    \cos  \psi   \right)} \frac{F_{z}}{m} \right. \\
        &\qquad\quad+ \left.\frac{I_0}{ml}\left( \, \dot{\psi} ^{2}  \cos^{3}  \theta   \cos  \psi + \dot{\theta} ^{2} \cos\theta  \cos \psi \right)\right],\\
        \ddot{z}=&\frac{ml^2}{I_0} \left[  { \left( \sin \psi  \sin \theta \cos \theta   \right) }\frac{F_{x}}{m}
        -   { \left( \cos \theta  \cos \psi \sin \theta   \right)} \frac{F_{y}}{m} 
        +   { \left(\cos^2 \theta +\frac{I_0}{ml^2}  \right)} \frac{F_{z}}{m} \right. \\
        &\qquad\quad+ \left.\frac{I_0}{ml}\left( \, \dot{\psi} ^{2}  \cos^{2}  \theta   \sin  \theta + \dot{\theta} ^{2} \sin\theta - \frac{g}{l} \right)\right],\\
    \end{split}
\end{align}
which are identical to \cite[eq. (13)]{Zhao2008} for the case of a uniform rod.

In order to proceed, we must specify $F_x$, $F_y$ and $F_z$. We assume Coulomb friction, and hence when slipping
\begin{equation}\label{eq:friction}
    F_x=-\mu \cos \beta F_z,\qquad F_y=-\mu \sin \beta F_z,
\end{equation}
where $\mu$ is the coefficient of friction between the rod and the surface and $\beta$ is the slip angle (see \cref{fig:me2}). 

It is useful to write equations of motion \cref{eq:uv,eq:rotode,eq:linode2} in terms of $\eta$, the speed of the contact point $A$, and $\varphi=\beta-\psi$, the relative slip angle. In addition, following \cite{Hogan2017}, we adopt the scalings $(x,y,z)=l(\,\tilde{x},\tilde{y},\tilde{z})$, $(F_x,F_y,F_z)=mg(\tilde{F}_x,\tilde{F}_y,\tilde{F}_z)$,
$\alpha=ml^2/I_0$ and $t=\tilde{t}/\omega$, where $\omega^2=g/l$. Note that $\alpha=3$ corresponds to a uniform slender rod.

Dropping the tildes, \cref{eq:uv,eq:rotode,eq:linode2,eq:friction} can be written as the following set of first order ordinary differential equations
\begin{equation}\label{eq:polareqs_varphi}
\begin{aligned}
    \dot{x}&=u=\eta\cos(\psi+\varphi),\qquad&\dot{\eta}&=Q_1 F_z+A_1,\\
    \dot{y}&=v=\eta\sin(\psi+\varphi), &\eta\dot{\varphi}&=Q_2 F_z+A_2-\eta\Psi,\\
    \dot{z}&=w, & \dot{w}&=p F_z+b,\\
    \dot{\psi}&=\Psi, & \dot{\Psi}&=d_1 F_z+c_1,\\
    \dot{\theta}&=\Theta,  &\dot{\Theta}&=d_2 F_z+c_2,
\end{aligned}
\end{equation}
where
\begin{align}
b(\Psi,\Theta,\theta) & := \left( \Psi ^{2}  \cos^{2}  \theta  + \Theta ^{2} \right)\sin\theta - 1, \label{eq:b}\\
    p(\theta,\varphi; \mu, \alpha) & := 1+\alpha+\alpha\sin\theta\left(\mu\cos\theta\sin\varphi-\sin\theta\right). \label{eq:p}
\end{align} 

The term $b(\Psi,\Theta,\theta)$ in the $\dot{w}$ equation is the scaled vertical component of free acceleration of the rod tip $A$.
The \textit{dimensional} form of $b$ is given by $l\dot{\psi}^2 \sin \theta \cos^2 \theta + l\dot{\theta}^2\sin \theta - g$.
The first term is due to azimuthal motion, the second term due to polar motion and the third term is the acceleration due to gravity.
The term $p F_z$ is the vertical acceleration of the rod tip resulting from the interaction with the surface%, so $[p(\theta,\varphi; \mu, \alpha)]=M^{-1}$
. The coefficients $Q_i(\theta,\varphi; \mu, \alpha), A_i(\Psi,\Theta,\theta,\varphi), d_i(\theta,\varphi; \mu, \alpha), c_i(\Psi,\Theta,\theta) \; (i=1,2)$ are given in \cref{sec:appA}. 
\begin{remark}\label{rem:3d22d}
From \cref{eq:polareqs_varphi}, see also \cref{eq:polareqs_varphi_terms} in \cref{sec:appA}, $\{\Psi=0\}\cap \{\varphi=\pm\pi/2\}$ (the planar Painlev\'e problem) is an invariant manifold.
\end{remark}

\section{Constraint-based approach}
\label{sec:constraint}
In this section, we analyse the rigid body equations \cref{eq:polareqs_varphi}. In particular, we highlight where they break down, compare them with the $2D$ problem and demonstrate the role of the azimuthal angular velocity $\Psi$. Our focus is on the equation of vertical acceleration $\dot{w}=p F_z+b$.

\subsection{Breakdown of rigid body equations}
\label{subsec:break}
The Painlev\'{e} problem can be thought of \cite{Lotstedt1981, Genot1999} as the linear  complementarity problem (LCP)
\begin{equation}\label{eq:comp}
    0\leq F_z \perp z\geq 0.
\end{equation}
%that is,
%\begin{equation}\label{eq:comp_thatis}
 %   z\geq0,\qquad F_z \geq 0, \qquad z \cdot F_z=0.
%\end{equation}

When the rod slips along the surface $z=0$, its tip must be in vertical equilibrium, that is, $w=\dot{w}=0$. In this case, from \cref{eq:polareqs_varphi}, we have
\begin{equation}\label{eq:FZ}
    F_z=-\frac{b(\Psi,\Theta,\theta)}{p(\theta,\varphi; \mu, \alpha)}.
    %=-\frac{\left(\Psi ^{2}  \cos^{2}  \theta   + \Theta ^{2}\right) \sin\theta - 1}{1+\alpha+\alpha\sin\theta\left(\mu \cos\theta\sin\varphi-\sin\theta\right)}.
\end{equation}

So the LCP \cref{eq:comp} with \cref{eq:FZ} has four modes, dependent on the signs of $b$ and $p$.
\begin{itemize}[leftmargin=0.95in]
    \item[$b<0, \; p>0$:] From \cref{eq:FZ}, we have $F_z>0$ and the rod is \textit{slipping} along the rough surface.
    \item[$b>0, \; p>0$:] Both components of the vertical acceleration $\dot{w}$ acting on the rod tip  are directed away from the surface and \textit{lift-off} occurs.
    \item[$b<0, \; p<0$:] Both the free acceleration $b$ and the vertical acceleration from contact forces $pF_z$  act vertically downwards. This is \textit{inconsistent} with our assumption of a rigid surface and we have \textit{non-existence} of solutions to \cref{eq:polareqs_varphi}.
    \item[$b>0, \; p<0$:] From \cref{eq:FZ}, we have $F_z>0$. But because the free acceleration $b$ is upwards, lift-off is still possible. So the motion of the rod is \textit{indeterminate} and we have \textit{non-uniqueness} of solutions to \cref{eq:polareqs_varphi}.
\end{itemize}

%Whilst we have suggested that $p<0$ would result in a lack of existence or uniqueness of solutions, we have yet to show that this is possible for any configuration of the rod.
Hence we need $p<0$ to have a paradox. From \cref{eq:p}, it is straightforward to show that $p <0$ when
\begin{equation}\label{eq:mup}
    \mu>\mu_\mathrm{P}^*(\varphi;\alpha)=\frac{2\sqrt{\alpha+1}}{\alpha|\sin\varphi|}.
%>\frac{2\sqrt{1+\alpha}}{\alpha}
\end{equation}

Solving $p=0$ for $\theta$, we obtain $\theta=\theta_\pm$, where
%\begin{equation}\label{eq:thetastar}
%    \theta_{\pm}(\mu,\alpha,\varphi)=\arctan\frac{1}{2}\left(-\mu\alpha\sin\varphi\pm \sqrt{\mu^2 \alpha^2 \sin^2 \varphi-4(\alpha+1)}\right).\\
%\end{equation}
\begin{equation}\label{eq:thetastar}
    \theta_{\pm}(\varphi;\mu,\alpha)=\arctan\left(-\frac{1}{2}\mu\alpha\sin\varphi\pm \frac{1}{2}\sqrt{\mu^2 \alpha^2 \sin^2 \varphi-4(\alpha+1)}\right).\\
\end{equation}
Similarly, solving $p=0$ for $\varphi$, we find $\varphi=\varphi_\pm$ where 
\begin{equation}\label{eq:varphistar}
    \varphi_{\mp}(\theta;\mu,\alpha):=-\frac{\pi}{2}\mp\arccos\left(\frac{1+\alpha\cos^2\theta}{\alpha\mu\sin\theta\cos\theta}\right).\\
\end{equation}

\begin{remark}
    When $\varphi=-\pi/2$, \cref{eq:mup,eq:thetastar} reduce to
    \begin{equation}\label{eq:mup2D}
    \mu_\mathrm{P}(\alpha):=\mu_\mathrm{P}^*(-\pi/2;\alpha)=\frac{2}{\alpha}\sqrt{\alpha+1}
\end{equation}
and
    \begin{equation}\label{eq:theta12}
    \begin{split}
        \theta_1(\mu,\alpha):=\theta_-\left(-\frac{\pi}{2};\mu,\alpha\right)= \arctan\left(\frac{1}{2}\mu\alpha- \frac{1}{2}\sqrt{\mu^2 \alpha^2-4(\alpha+1)}\right), \\
        \theta_2(\mu,\alpha):=\theta_+\left(-\frac{\pi}{2};\mu,\alpha\right)=\arctan\left(\frac{1}{2}\mu\alpha+ \frac{1}{2}\sqrt{\mu^2 \alpha^2-4(\alpha+1)}\right),
    \end{split}
\end{equation}
corresponding to \cite[eq. (2.13), (2.14)]{Hogan2017} respectively for the $2D$ Painlev\'{e} problem (\cref{rem:3d22d}). 
\end{remark}
\cref{fig:badsection} shows an example of regions in which $p<0$ for the $3D$ Painlev\'{e} problem.  
In \cref{fig:rod_orientations}, $p<0$ where the rod is within the blue cone and the state of the rod is either inconsistent or indeterminate. 
In \cref{fig:badregion},  we show where $p<0$ in the $(\theta, \varphi)$-plane. The bounds of the blue region in the $(\theta, \varphi)$-plane are important in the sequel. They can be determined as follows. From \cref{eq:p}, it can be shown by differentiation that when $p=0$,
\begin{equation}\label{eq:dvarphidtheta}
    \frac{d\varphi}{d\theta} = \frac{2 (\mu \cos 2\theta \sin \varphi - \sin 2\theta)}{\mu \sin 2\theta \cos \varphi}
\end{equation} 
The blue region in \cref{fig:badregion} is bounded in $\theta$ when $\diff{\theta}{\varphi}=0$, that is when $\varphi = -\frac{\pi}{2}$ since $\varphi\in(-\pi,0)$. The solutions to $p(\theta,-\pi/2)=0$ are $\theta=\theta_{1,2}(\mu,\alpha)$, given in \cref{eq:theta12}. The blue region is bounded in $\varphi$ when $\diff{\varphi}{\theta}=0$. This happens when $\sin \varphi = \frac{1}{\mu}\tan 2\theta$, that is, when $\theta=\theta_\mathrm{P}(\alpha)$ where 
\begin{equation}\label{eq:thetap}
    \theta_\mathrm{P}:=\arctan{\left(\sqrt{1+\alpha}\right)}. 
\end{equation}
Hence $\varphi \in (\varphi_1, \varphi_2)$ in \cref{fig:badregion} where
%\begin{equation}\label{eq:varphi12}
%    \begin{split}
%        \varphi_1 &= -\arcsin \frac{2\sqrt{\alpha+1}}{\alpha \mu}, \\
%        \varphi_2&=-\pi+\varphi_1.
%    \end{split}
%\end{equation}
\begin{equation}\label{eq:varphi12}
    \varphi_{1,2}(\mu,\alpha):=\varphi_{\mp}(\theta_\mathrm{P}(\alpha);\mu,\alpha)=-\frac{\pi}{2}\mp \arccos{\left(\frac{2\sqrt{1+\alpha}}{\alpha\mu}\right)},
\end{equation}
and $p(\theta_\mathrm{P},\varphi_{1,2})=0$.

Therefore, the region where $p<0$ can be written as
\begin{align}
    \big\{(\theta,\varphi)|\varphi\in (\varphi_-(\theta),\varphi_+(\theta)),\theta\in(\theta_1,\theta_2)\big\},
    \intertext{or equivalently}
    \big\{(\theta,\varphi)|\theta\in(\theta_-(\varphi),\theta_+(\varphi)), \varphi\in (\varphi_1,\varphi_2)\big\},
\end{align}
for $\mu>\mu_\mathrm{P}(\alpha)$.
\iffalse
\begin{figure}[htbp]
	\centering
	\begin{subfigure}[t]{0.32\textwidth}
        \begin{overpic}[width=\textwidth]{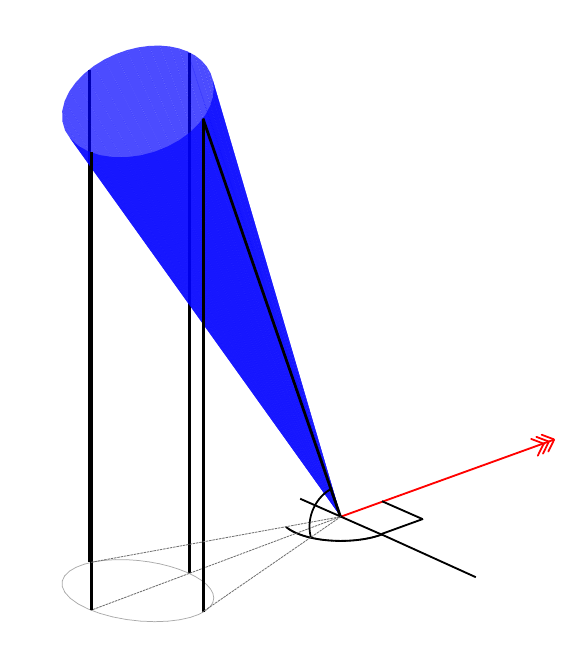}
        	\put(50,10){$-\varphi$}
        	\put(35,23){$\theta$}
        	\put(50,39){\color{red}{Direction of slip}}
        \end{overpic}
        \caption{}
        \label{fig:rod_orientations}
    \end{subfigure}
    \hfil
    \begin{subfigure}[t]{0.41\textwidth}
        \begin{overpic}[width=\textwidth]{}
	        \put(50,-1){$\theta$}
	        \put(0,75){$\varphi$}
   	        \put(15,7){$0$}
       	    \put(7,12){$-\pi$}
	        \put(9,59){$\varphi_2$}
	        \put(9,42){$\varphi_1$}
        	\put(5,50){$-\frac{\pi}{2}$}
        	\put(10,87){$0$}
        	\put(97,7){$\frac{\pi}{2}$}
        	\put(56,7){$\frac{\pi}{4}$}
        	\put(66,7){$\theta_1$}
        	\put(81,7){$\theta_2$}
        \end{overpic}
        \caption{}
        \label{fig:badregion}
    \end{subfigure}
    \caption{Region, shown in blue, inside which $p<0$ for $\alpha=3, \mu=1.4$ where we have either inconsistency or indeterminacy. Here $\theta_1=0.9702$, $\theta_2=1.2209$ from \cref{eq:theta12} and $\varphi_1=-1.8807$, $\varphi_2=-1.2610$ when $\theta=\theta_\mathrm{P}=1.1071$ from \cref{eq:varphi12} and \cref{eq:thetap} respectively: (a) in physical space, (b) in the $(\theta, \varphi)$-plane.}
    \label{fig:badsection}
\end{figure}
\fi
\begin{figure}[htbp]
	\centering
	\begin{subfigure}[t]{0.32\textwidth}
        \begin{overpic}[width=\textwidth]{Figs/rod_orientations3.pdf}
        	\put(50,10){$-\varphi$}
        	\put(35,23){$\theta$}
        	\put(50,39){\color{red}{Direction of slip}}
        \end{overpic}
        \caption{}
        \label{fig:rod_orientations}
    \end{subfigure}
    \hfil
    \begin{subfigure}[t]{0.5\textwidth}
        \pgfplotstableset{col sep=comma}
\pgfplotsset{scaled x ticks=false}
\pgfplotsset{width=\textwidth,height=0.8\textwidth}
\begin{tikzpicture}
\begin{axis}[enlargelimits=false,axis on top,
    xlabel={$\theta$},
    ylabel={$\varphi$},
    xmin=0, xmax=1.57079632679,%change here to zoom
    ymin=-3.14159265358979, ymax=0,
    xtick={0,0.78539816339,0.9702,1.2209,1.57079632679},
    xticklabels={$0$,$\frac{\pi}{4}$,$\theta_1$,$\theta_2$,$\frac{\pi}{2}$},
    ytick={-3.14159265358979,-1.8807,-1.57079632679,-1.2610,0},
    yticklabels={$-\pi$,$\varphi_1$,$-\frac{\pi}{2}$,$\varphi_2$,$0$},
    ]
\addplot graphics[xmin=0,xmax=1.57079633,ymin=-3.14159265258979,ymax=0]{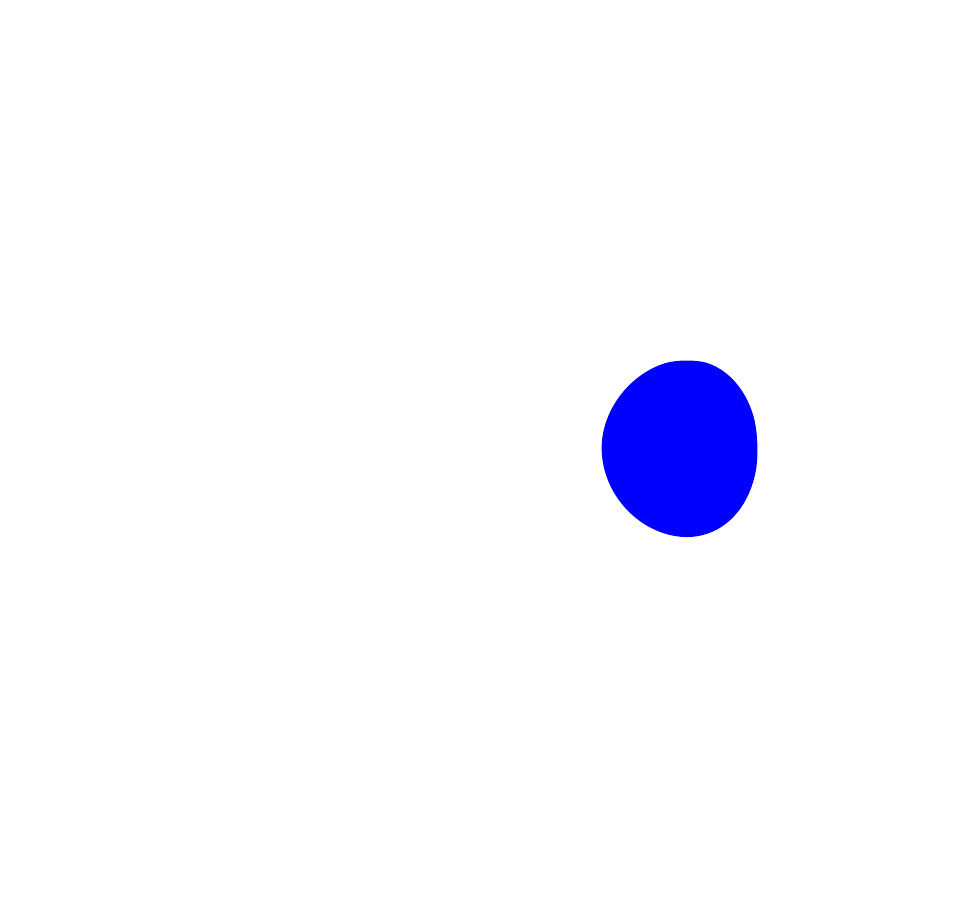};
\addplot[dashed,color=black,line width=0.5pt] coordinates{(0.9702,-3.14159265358979)(0.9702,0)};
\addplot[dashed,color=black,line width=0.5pt] coordinates{(1.2209,-3.14159265358979)(1.2209,0)};
\addplot[dashed,color=black,line width=0.5pt] coordinates{(0,-1.2610)(1.57079632679,-1.2610)};
\addplot[dashed,color=black,line width=0.5pt] coordinates{(0,-1.8807)(1.57079632679,-1.8807)};
\addplot[dashed,color=black,line width=0.5pt] coordinates{(0,-1.57079632679)(1.57079632679,-1.57079632679)};
\end{axis}
\end{tikzpicture}
        \caption{}
        \label{fig:badregion}
    \end{subfigure}
    \caption{Region, shown in blue, inside which $p<0$ for $\alpha=3, \mu=1.4$ where we have either inconsistency or indeterminacy. Here $\theta_1=0.9702$, $\theta_2=1.2209$ from \cref{eq:theta12} and $\varphi_1=-1.8807$, $\varphi_2=-1.2610$ when $\theta=\theta_\mathrm{P}=1.1071$ from \cref{eq:varphi12} and \cref{eq:thetap} respectively: (a) in physical space, (b) in the $(\theta, \varphi)$-plane.}
    \label{fig:badsection}
\end{figure}

\subsection{Comparison with \texorpdfstring{$2D$}{2D} problem}
\label{subsec:comp}
We compare the $2D$ and $3D$ problems, taking \cite{Genot1999} $\alpha=3$, corresponding to a uniform slender rod and  $\mu=1.4>\mu_P(3)=\frac{4}{3}$ from \cref{eq:mup2D}. % - note that this can only be done in $3D$ when $\Psi=\dot{\psi}=0$.
\cref{fig:p3s} shows, for $\Theta\equiv\dot{\theta}>0$, the codimension-1 surfaces where $b=0$ (in red) and $p=0$ (in blue). For the $2D$ problem (\cref{fig:pandb}), these surfaces are projected into $(\theta,\Theta)$-space, where they intersect at points $P^+$ and $Q^+$ (labelled $P^+_{c1}, P^+_{c2}$ respectively in \cite[fig. 2]{Genot1999}). For the $3D$ problem, these surfaces are shown for fixed $\Psi=0$ and projected into $(\theta,\varphi,\Theta)$-space (\cref{fig:p3.2}), where they intersect at a closed curve. This is the $\Psi=0$ section of the \textit{{G\'enot-Brogliato manifold}} (or \textit{GB manifold}) \cite{Varkonyi18}. In \cref{fig:GBs_bif3}, we will see how this section varies for $\Psi\neq0$. The $2D$ section at $\varphi=-\frac{\pi}{2}$ in \cref{fig:p3.2} gives \cref{fig:pandb}. 

The surfaces $b=0$ and $p=0$, symmetric about $\Theta=0$ and $\varphi=-\frac{\pi}{2}$, segment phase space into four different regions corresponding to the four modes (slipping, lift-off, inconsistent and indeterminate) of the LCP \cref{eq:comp}. In \cref{fig:pandb}, these regions are labelled and a paradox ($p<0$) occurs between the blue lines $p=0$. It is not possible to avoid a paradox whilst increasing $\theta$. In \cref{fig:p3.2}, a paradox occurs inside the blue cylinder $p=0$: \textit{indeterminate} above the surface $b=0$ and \textit{inconsistent} below. Outside the blue cylinder $p=0$, we have \textit{lift-off} above the surface $b=0$ and \textit{slipping} below. From \cref{eq:p}, the shape of $p=0$  is independent of $\Psi$ and hence in $3D$ it is always possible to avoid a paradox, if it occurs, by choosing the relative slip angle $\varphi$ such that either $\varphi \in (-\pi, \varphi_2)$ or $\varphi \in (\varphi_1, 0)$, see also \cref{fig:badregion}.
\begin{figure}[htbp]
    \centering
    \begin{subfigure}[t]{0.41\textwidth}
        \begin{overpic}[width=\textwidth]{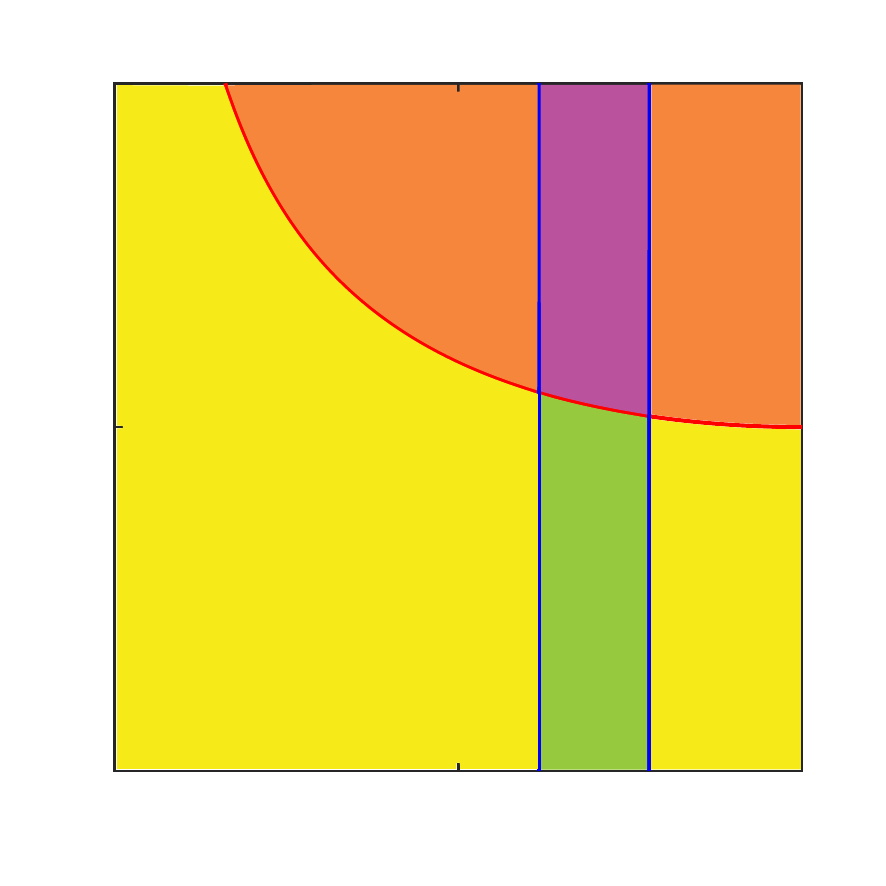}
            \put(12,6){0}
            \put(50,6){$\frac{\pi}{4}$}
            \put(60,6){$\theta_1$}
            \put(73,6){$\theta_2$}
            \put(88,6){$\frac{\pi}{2}$}
            \put(80,0){$\theta$}
            \put(0,60){$\Theta$}
            \put(8,13){0}
            \put(8,50){1}
            \put(8,89){2}
            \put(51,50){$P^+$}
            \put(76,46){$Q^+$}
            \put(59.5,54.3){$\bullet$}
            \put(71.9,52){$\bullet$}
            \put(20,97){\textcolor{myred}{$b=0$}}
            \put(23,91){\textcolor{myred}{\textbf{\textbackslash}}}
            \put(60,97){\textcolor{myblue}{$p=0$}}
            \put(71,91){\textcolor{myblue}{\textbf{\textbackslash}}}
            \put(61,91){\textcolor{myblue}{\textbf{/}}}
            \put(30,40){Slipping}
            \put(40,75){Lift-off}
      %      \put(63,17){\rotatebox{90}{ {Incon.}}}
       %     \put(63,60){\rotatebox{90}{ {Indet.}}}
       \put(68,30){--------------}
            \put(95,30){Inconsistent}
            \put(68,65){--------------}
            \put(95,65){Indeterminate}
        \end{overpic}
        \caption{$2D$ motion, from \cite{Hogan2017}}
        \label{fig:pandb}
    \end{subfigure}
    \hfill
    \begin{subfigure}[t]{0.41\textwidth}
        \begin{overpic}[width=\textwidth]{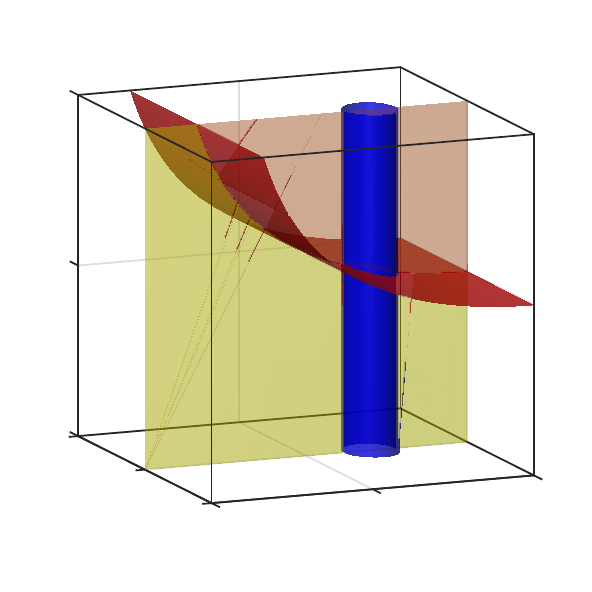}
            \put(38,9){0}
            \put(64,10){$\frac{\pi}{4}$}
            \put(92,13){$\frac{\pi}{2}$}
            \put(75,0){$\theta$}
            \put(0,65){$\Theta$}        
            \put(8,55){1}
            \put(8,84){2}
            \put(10,10){$\varphi$}
            \put(13,17){$-\frac{\pi}{2}$}
            \put(24,12){$-{\pi}$}
            \put(8,25){$0$}
            \put(23,89){\textcolor{red}{$b=0$}}
               \put(29,84){\textcolor{red}{\textbf{/}}}
                \put(55,93){\textcolor{blue}{$p=0$}}
            \put(61,86){\textcolor{blue}{\textbf{/}}}

        \end{overpic}
        \caption{$3D$ motion, for $\Psi=0$}
        \label{fig:p3.2}
    \end{subfigure}
    \caption{Kinematics for $\Theta \equiv \dot{\theta} >0$: (a) $2D$ motion, reproduced from \cite{Hogan2017}: the curves $b=0$ and $p=0$ intersect at $P^+$ and $Q^+$. They separate phase space into four regions, with  different rigid body dynamics: \textit{slipping} ($b<0$, $p>0$, in yellow), \textit{lift-off} ($b>0$, $p>0$, in orange), \textit{inconsistent} ($b<0$, $p<0$, in lime) and \textit{indeterminate} ($b>0$, $p<0$, in purple). Due to the symmetry about $\Theta=0$, there are similarly $P^-$ and $Q^-$ for $\Theta<0$. (b) $3D$ motion with $\Psi\equiv\dot{\psi}=0$: the curves $b=0$ and $p=0$ intersect in the G\'enot-Brogliato (or GB) manifold \cite{Varkonyi18}. A paradox occurs inside the blue cylinder: indeterminate above the surface $b=0$ (in red), inconsistent below. Outside the blue cylinder, we have lift-off above the surface $b=0$ (in red), slipping below. The cross section at $\varphi=-\pi/2$ gives (a). In both figures, we set $\alpha=3$, $\mu=1.4$.}
    \label{fig:p3s} 
\end{figure}
A figure similar to \cref{fig:p3s} can be drawn for $\Theta<0$ where, for the $2D$ problem, the lines $p=0$ and $b=0$ intersect at $P^-$ and $Q^-$ (labelled $P^-_{c1}, P^-_{c2}$ respectively in \cite{Genot1999}) . But since there is no effect on the kinematics, it is not shown here. However as we shall see in \cref{sec:slip}, the geometry of the dynamics in $\Theta>0$ is very different from that in $\Theta<0$.

\subsection{The role of \texorpdfstring{$\Psi$}{Psi}}
\label{subsec:roleofPsi}
 Now we investigate the role of the azimuthal angular velocity $\Psi \equiv \dot{\psi}$. We shall see that when $|\Psi|$ is large enough, lift-off can occur, independently of a paradox (for $p>0$), even when $\Theta=\dot{\theta}=0$. At even larger values of $\Psi$, indeterminacy can occur when $\Theta=0$ and the inconsistent region can even disappear.

In numerical plots for $\alpha=3$ and $\mu=1.4$, between  $\Psi=\pm1.56$ (\cref{fig:p3.3}) and  $\Psi=\pm1.7$ (\cref{fig:p3.4}), we see that the area of lift-off (above $b=0$, outside the cylinder $p=0$) has deformed so much that it is possible for the rod to lift off the rough surface even when $\Theta=0$. The rotation about the vertical $\bf{K}$ axis in the inertial frame (see \cref{fig:me2}) is sufficient to generate an overall upwards acceleration of the rod tip. 
%(due to ``centrifugal forces'' like for a centrifugal governor). 
A further increase to $\Psi=\pm 2.2$ (\cref{fig:p3.5}) sees the possibility of indeterminacy (above $b=0$, inside the cylinder $p=0$) when $\Theta=0$. Then between \cref{fig:p3.5} and \cref{fig:p3.6}, the inconsistent region (below $b=0$, inside the cylinder $p=0$) ceases to exist. 
%\john{Can we show this using vectors? } 
%\noah{Do you think we can make some nice justification of this from this \url{https://www.overleaf.com/project/5fa71bfaef79d40d67c1e1d6}}
\begin{figure}[htbp]
    \centering
    \newlength{\templength}
    \setlength{\templength}{0.43\textwidth}
    \begin{subfigure}[t]{\templength}
        \begin{overpic}[width=\textwidth]{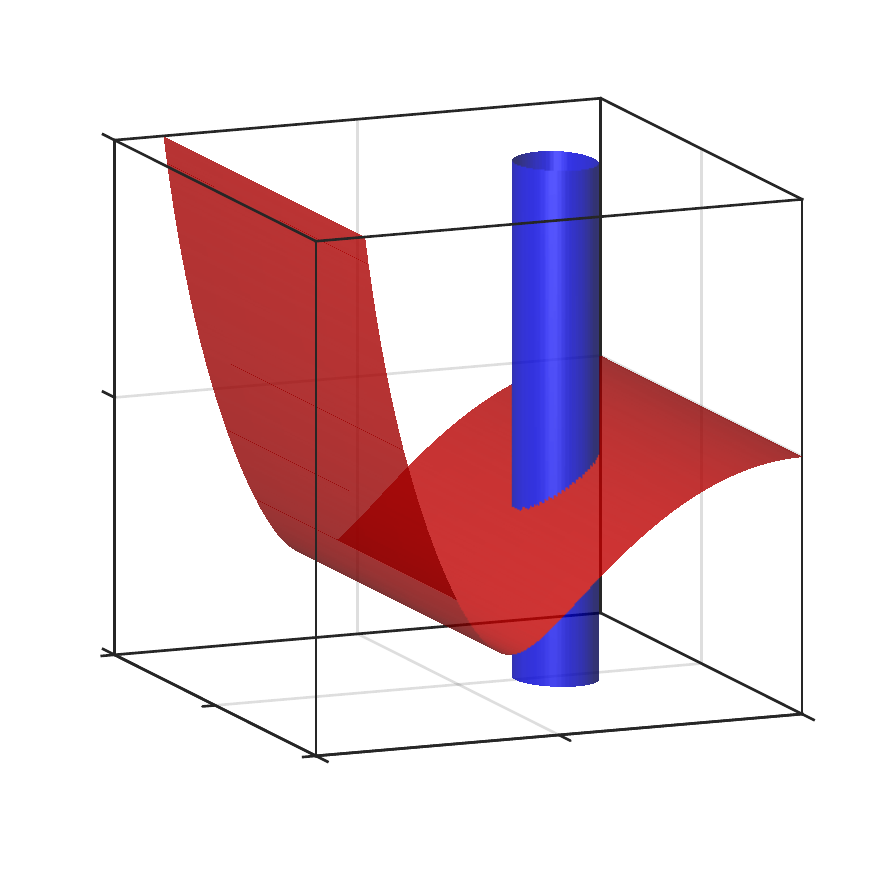}
            \put(38,9){0}
            \put(64,10){$\frac{\pi}{4}$}
            \put(92,13){$\frac{\pi}{2}$}
            \put(75,0){$\theta$}
            \put(0,65){$\Theta$}        
            \put(8,55){1}
            \put(8,84){2}
            \put(10,10){$\varphi$}
            \put(13,17){$-\frac{\pi}{2}$}
            \put(24,12){$-{\pi}$}
            \put(8,25){$0$}
        \end{overpic}
        \caption{$\Psi=\pm1.56$}
        \label{fig:p3.3}
    \end{subfigure}
    \hfil
    \begin{subfigure}[t]{\templength}
        \begin{overpic}[width=\textwidth]{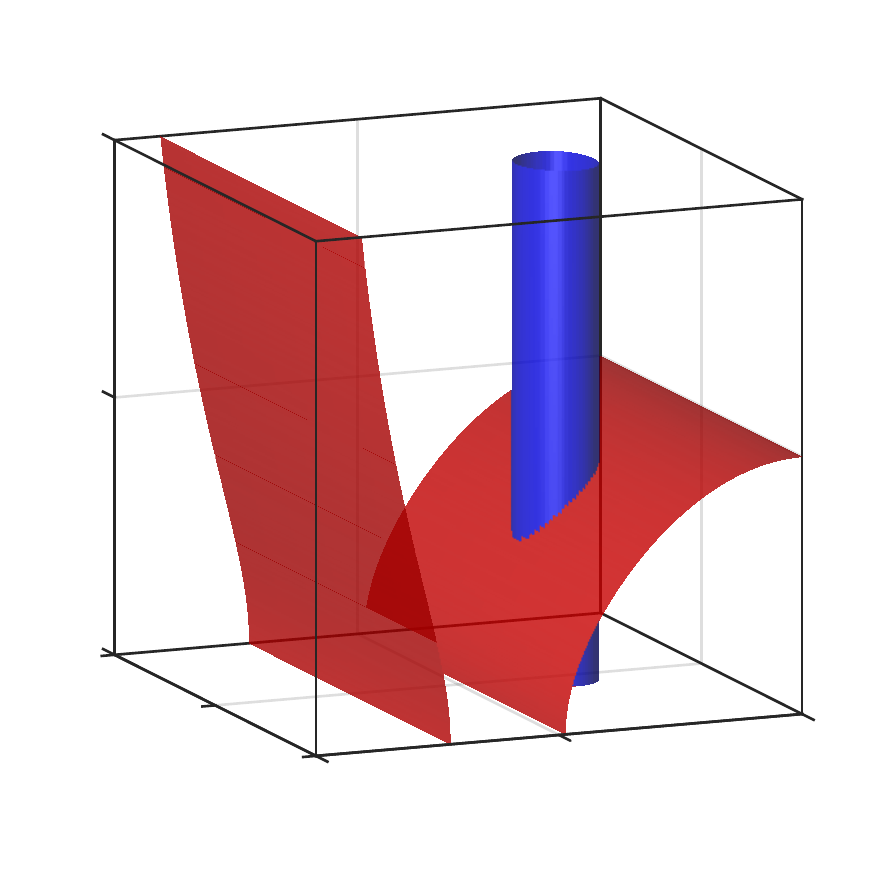}
            \put(38,9){0}
            \put(64,10){$\frac{\pi}{4}$}
            \put(92,13){$\frac{\pi}{2}$}
            \put(75,0){$\theta$}
            \put(0,65){$\Theta$}        
            \put(8,55){1}
            \put(8,84){2}
            \put(10,10){$\varphi$}
            \put(13,17){$-\frac{\pi}{2}$}
            \put(24,12){$-{\pi}$}
            \put(8,25){$0$}
        \end{overpic}
        \caption{$\Psi=\pm1.7$}
        \label{fig:p3.4}
    \end{subfigure}
    \begin{subfigure}[t]{\templength}
        \begin{overpic}[width=\textwidth]{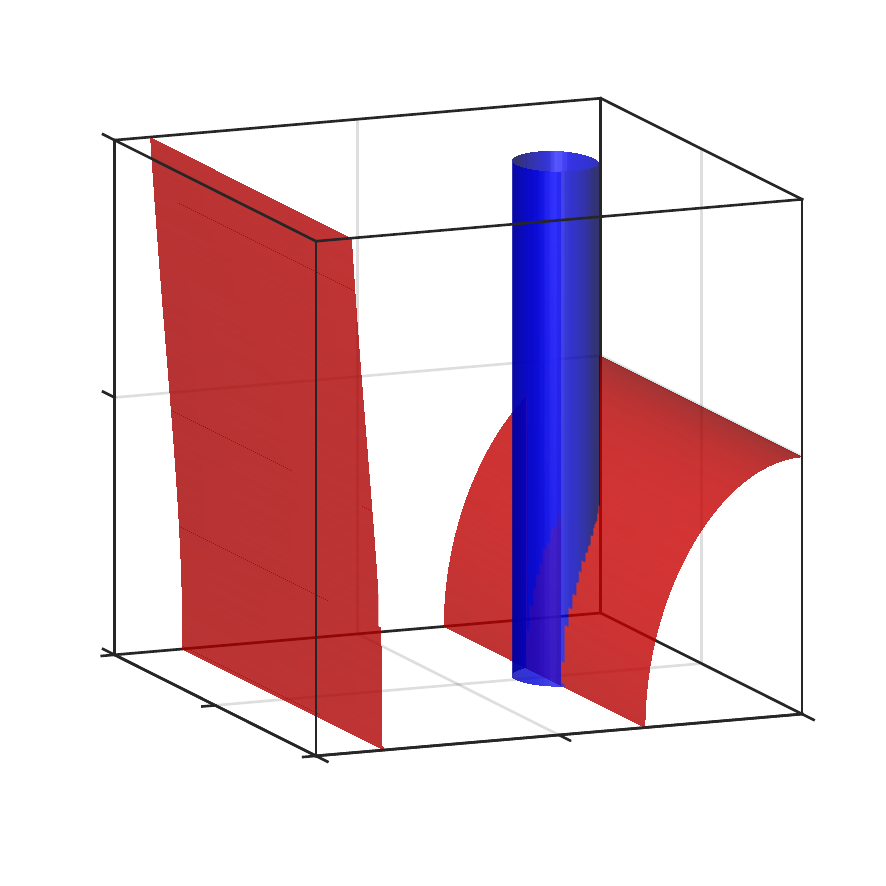}
            \put(38,9){0}
            \put(64,10){$\frac{\pi}{4}$}
            \put(92,13){$\frac{\pi}{2}$}
            \put(75,0){$\theta$}
            \put(0,65){$\Theta$}        
            \put(8,55){1}
            \put(8,84){2}
            \put(10,10){$\varphi$}
            \put(13,17){$-\frac{\pi}{2}$}
            \put(24,12){$-{\pi}$}
            \put(8,25){$0$}
        \end{overpic}
        \caption{$\Psi=\pm2.2$}
        \label{fig:p3.5}
    \end{subfigure}
    \hfil
    \begin{subfigure}[t]{\templength}
        \begin{overpic}[width=\textwidth]{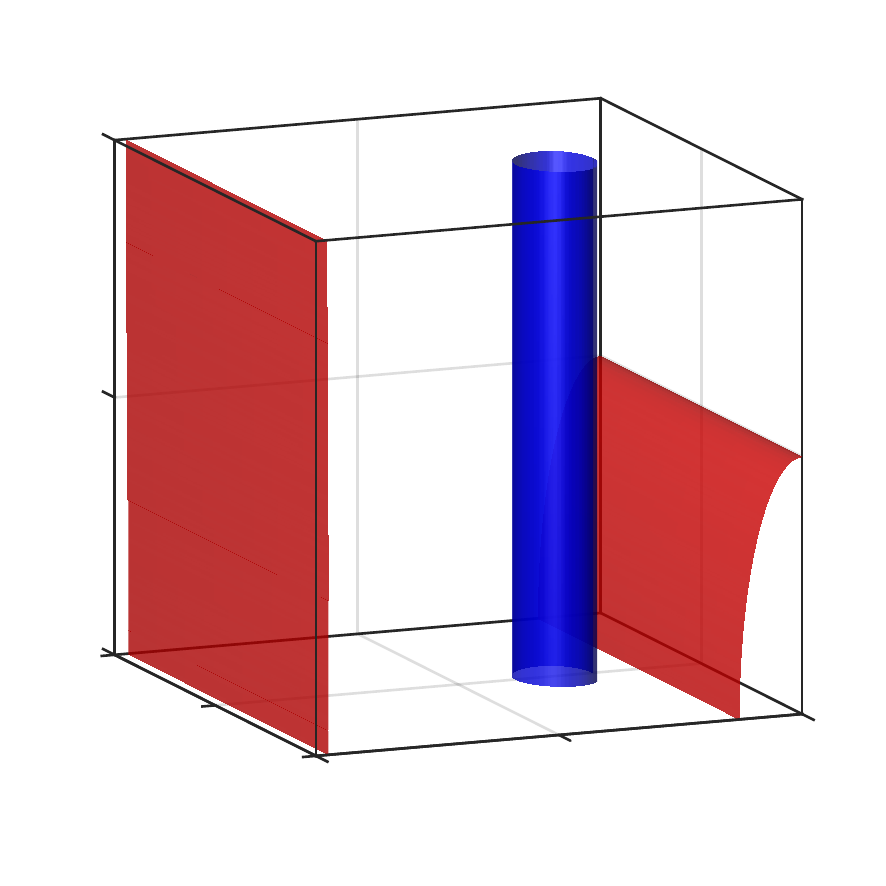}
            \put(38,9){0}
            \put(64,10){$\frac{\pi}{4}$}
            \put(92,13){$\frac{\pi}{2}$}
            \put(75,0){$\theta$}
            \put(0,65){$\Theta$}        
            \put(8,55){1}
            \put(8,84){2}
            \put(10,10){$\varphi$}
            \put(13,17){$-\frac{\pi}{2}$}
            \put(24,12){$-{\pi}$}
            \put(8,25){$0$}
        \end{overpic}
        \caption{$\Psi=\pm5$}
        \label{fig:p3.6}
    \end{subfigure}
    \caption{Surfaces $b=0$ (in red) and $p=0$ (in blue) as $\Psi$ varies. In all figures we set $\alpha=3$, $\mu=1.4$. Only $\Theta\geq0$ shown due to symmetry of $b=0$ and $p=0$ about $\Theta=0$.}
     \label{fig:p3ss} 
\end{figure}

We now find expressions for these critical values of $\Psi$. The smallest value of $\Psi$ where lift-off can be obtained when $\Theta=\dot{\theta}=0$ occurs when $\pdiff{b}{\theta}=\pdiff{b}{\Theta}=0$ on $b=0$. From \cref{eq:b}, it can be shown that this occurs when $(\theta,\Psi)=(\theta_\mathrm{L},\pm\Psi_\mathrm{L})$, where
\begin{equation}\label{eq:thetal}
    \sin \theta_\mathrm{L} := \frac{1}{\sqrt{3}},
\end{equation} 
so that
\begin{equation}\label{eq:thetalvalue}
    \theta_\mathrm{L}:=\arcsin{\left(\frac{1}{\sqrt{3}}\right)} \approx 0.6155 \approx 35.3^{\circ},
\end{equation} 
and
\begin{equation}\label{eq:psil}
    \Psi_\mathrm{L}:=\left(\frac{3\sqrt{3}}{2}\right)^{\frac{1}{2}}\approx 1.6119.
\end{equation}

We show this case as a $2D$ section with $\varphi=-\frac{\pi}{2}$ in \cref{fig:psi1.61192D} and in $3D$ for $\varphi \in (-\pi,0]$ in \cref{fig:psi1.61193D}. Note that this potential for lift-off when $\Theta=0$ for $\Psi > \Psi_\mathrm{L}$ can occur even when there is no paradox.
\begin{figure}[htbp]
    \centering
    \begin{subfigure}[t]{0.41\textwidth}
        \begin{overpic}[width=\textwidth]{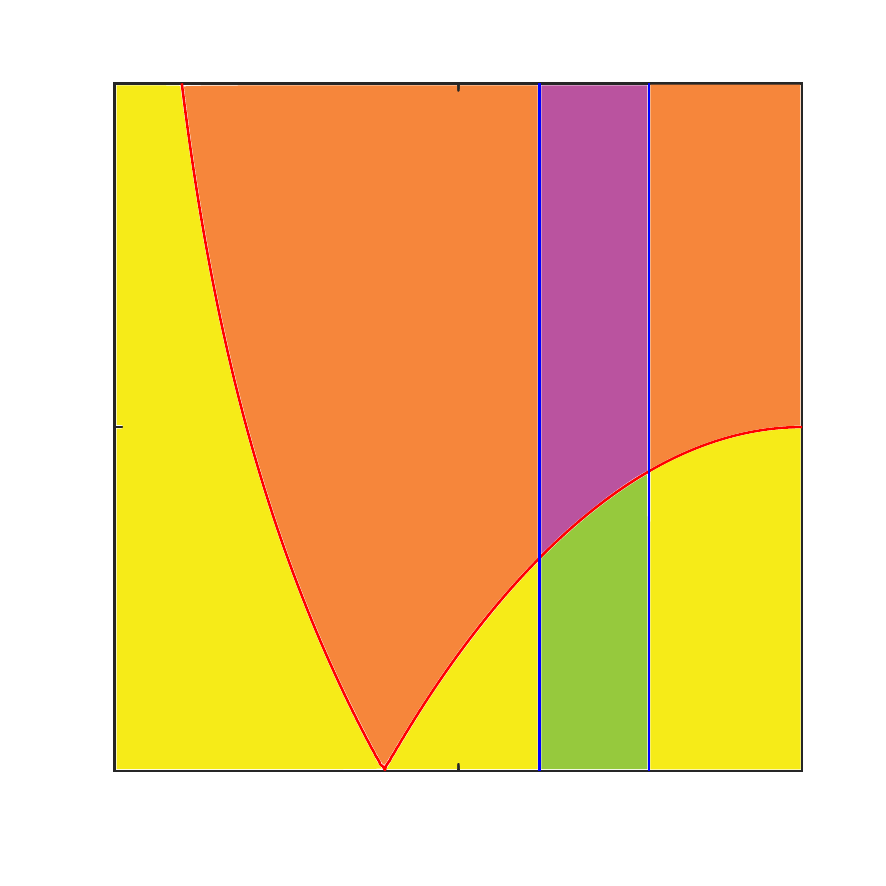}
            \put(14,8){0}
            \put(50,8){$\frac{\pi}{4}$}
            \put(40,8){$\theta_\mathrm{L}$}
            \put(88,8){$\frac{\pi}{2}$}
            \put(60,0){$\theta$}
            \put(0,60){$\Theta$}
            \put(8,13){0}
            \put(8,50){1}
            \put(8,89){2}
   %         \put(12,97){\textcolor{myred}{$b=0$}}
    %        \put(18,91){\textcolor{myred}{\textbf{\textbackslash}}}
     %       \put(61,97){\textcolor{myblue}{$p=0$}}
      %      \put(71,91){\textcolor{myblue}{\textbf{\textbackslash}}}
       %     \put(61,91){\textcolor{myblue}{\textbf{/}}}
            \put(15,20){Slipping}
            \put(35,60){Lift-off}
           % \put(63,17){\rotatebox{90}{ {Incon.}}}
        %    \put(63,60){\rotatebox{90}{ {Indet.}}}
         \put(68,30){--------------}
            \put(95,30){Inconsistent}
            \put(68,65){--------------}
            \put(95,65){Indeterminate}
        \end{overpic}
        \caption{$\Psi =\pm\Psi_\mathrm{L}$, for $\varphi=-\frac{\pi}{2}$.}
        \label{fig:psi1.61192D}
    \end{subfigure}
    \hfill
    \begin{subfigure}[t]{0.41\textwidth}
        \begin{overpic}[width=\textwidth]{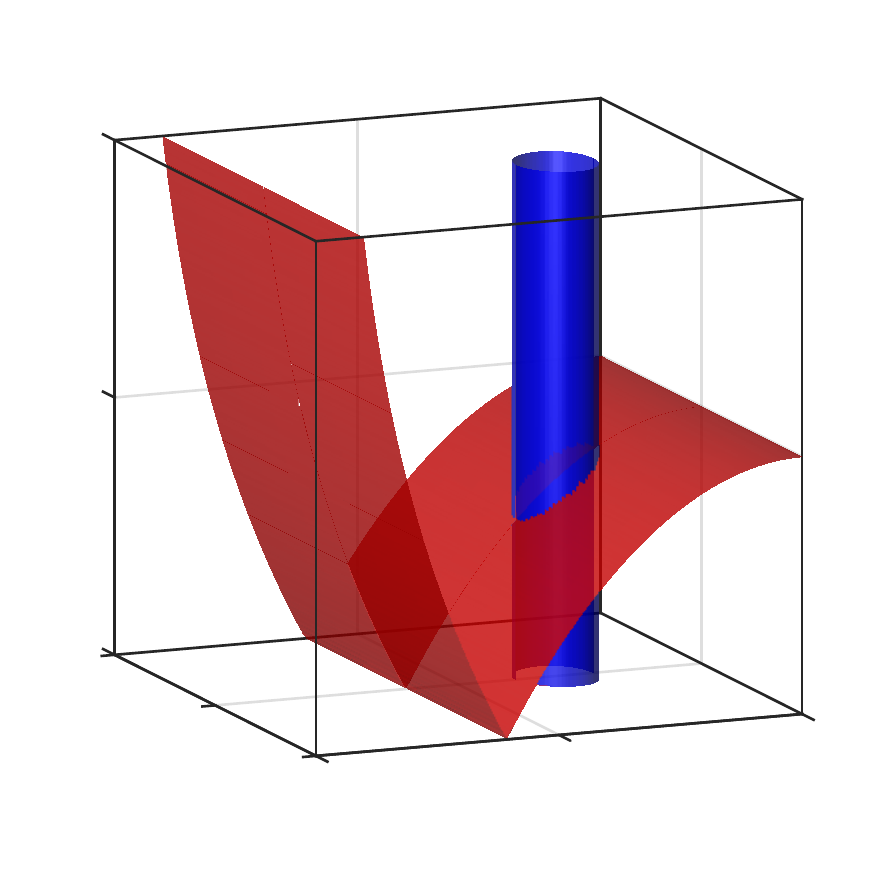}
            \put(38,9){0}
            \put(64,10){$\frac{\pi}{4}$}
            \put(55,10){$\theta_\mathrm{L}$}
            \put(92,13){$\frac{\pi}{2}$}
            \put(75,0){$\theta$}
            \put(0,65){$\Theta$}        
            \put(8,55){1}
            \put(8,84){2}
            \put(10,10){$\varphi$}
            \put(13,17){$-\frac{\pi}{2}$}
            \put(24,12){$-{\pi}$}
            \put(8,25){$0$}
        \end{overpic}
        \caption{$\Psi =\pm\Psi_\mathrm{L}$, for $\varphi \in (-\pi,0]$.}
        \label{fig:psi1.61193D}
    \end{subfigure}
    \caption{Areas of different dynamics for $3D$ motion when $\Psi =\pm\Psi_\mathrm{L}$, from \cref{eq:psil}. Lift-off can occur for $|\Psi| \ge \Psi_\mathrm{L}$ even when $\Theta=\dot{\theta}=0$: (a) $\varphi=-\frac{\pi}{2}$, (b) $\varphi \in (-\pi,0]$. In both figures we set $\alpha=3$, $\mu=1.4$.}
    \label{fig:psi1.6119} 
\end{figure}

The next critical value of $\Psi$ occurs when the surface $b=0$ is tangential to the cylinder $p=0$ at $\theta=\theta_1$, where $\theta_1$ is given in \cref{eq:theta12}. A straightforward calculation shows that this happens when $\Psi=\pm \Psi_1$, where
\begin{equation}\label{eq:psi1}
    \Psi_1(\mu,\alpha) := \frac{(1+\tan^2 \theta_1)^{\frac{3}{4}}}{(\tan \theta_1)^{\frac{1}{2}}}.
\end{equation} 
When $\alpha=3$, $\mu=1.4$, this corresponds to $\Psi_1 = 1.9480$ with $\theta_1=0.9702$. We show this case in \cref{fig:psi1.9483}; as a $2D$ section with $\varphi=-\frac{\pi}{2}$ in \cref{fig:psi1.9482D} and for $\varphi \in (-\pi,0]$ in \cref{fig:psi1.9483D}. When $\theta_\mathrm{L}<\theta_1$, this is the smallest value of $\Psi$ for which indeterminacy can occur for $\Theta=0$.
\begin{figure}[htbp]
    \centering
    \begin{subfigure}[t]{0.41\textwidth}
        \begin{overpic}[width=\textwidth]{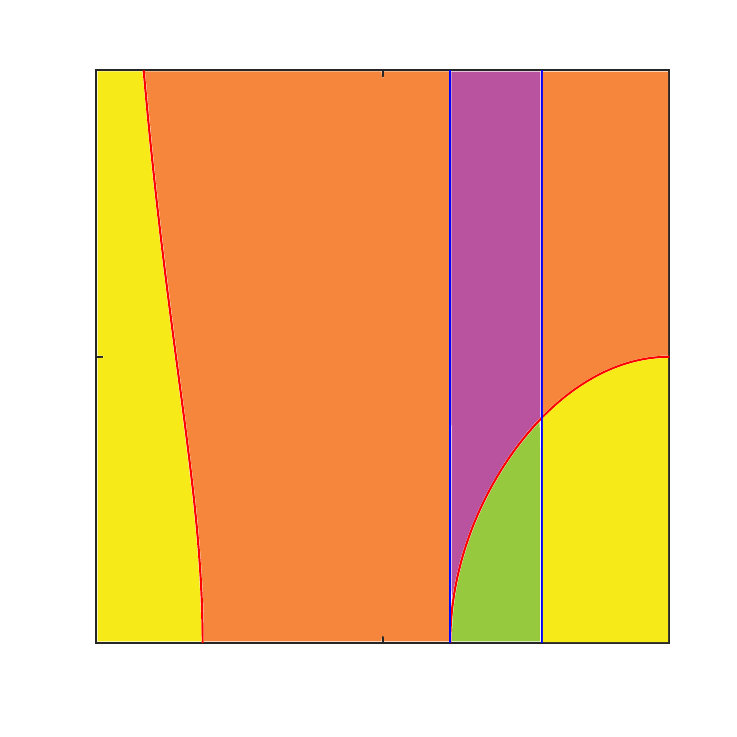}
            \put(14,8){0}
            \put(50,8){$\frac{\pi}{4}$}
            \put(88,8){$\frac{\pi}{2}$}
            \put(60,0){$\theta$}
            \put(0,60){$\Theta$}
            \put(8,13){0}
            \put(8,50){1}
            \put(8,89){2}
 %           \put(10,97){\textcolor{myred}{$b=0$}}
  %          \put(14,91){\textcolor{myred}{\textbf{\textbackslash}}}
   %         \put(76,56){\textcolor{myred}{$b=0$}}
    %        \put(82,51){\textcolor{myred}{\textbf{\textbackslash}}}
     %       \put(61,97){\textcolor{myblue}{$p=0$}}
      %      \put(71,91){\textcolor{myblue}{\textbf{\textbackslash}}}
       %     \put(61,91){\textcolor{myblue}{\textbf{/}}}
            \put(35,30){Slipping}
             \put(18,30){--------}
            % \put(15,20){Slipping}
            \put(30,65){Lift-off}
       %     \put(65,13){\rotatebox{90}{ {Incon.}}}
        %    \put(64.5,47){\rotatebox{90}{ {Indeterminate}}}
         \put(68,30){--------------}
            \put(95,30){Inconsistent}
            \put(68,65){--------------}
            \put(95,65){Indeterminate}
        \end{overpic}
        \caption{$\Psi =\pm\Psi_1$, for $\varphi=-\frac{\pi}{2}$.}
        \label{fig:psi1.9482D}
    \end{subfigure}
    \hfill
    \begin{subfigure}[t]{0.41\textwidth}
        \begin{overpic}[width=\textwidth]{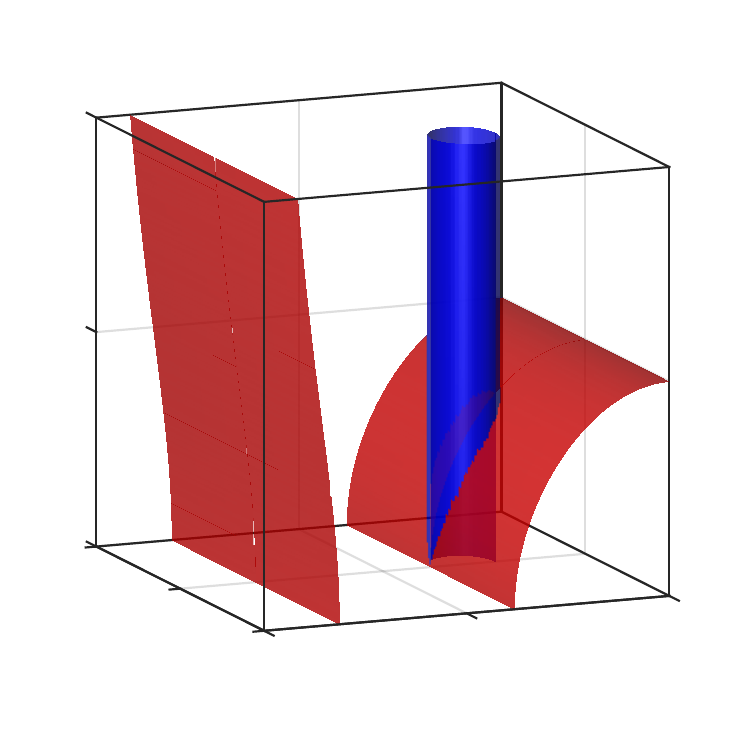}
            \put(38,9){0}
            \put(64,10){$\frac{\pi}{4}$}
            \put(92,13){$\frac{\pi}{2}$}
            \put(75,0){$\theta$}
            \put(0,65){$\Theta$}        
            \put(8,55){1}
            \put(8,84){2}
            \put(10,10){$\varphi$}
            \put(13,17){$-\frac{\pi}{2}$}
            \put(24,12){$-{\pi}$}
            \put(8,25){$0$}
        \end{overpic}
        \caption{$\Psi =\pm\Psi_1$, for $\varphi \in (-\pi,0]$.}
        \label{fig:psi1.9483D}
    \end{subfigure}
    \caption{Areas of different dynamics for $3D$ motion when $\Psi =\pm\Psi_1$, from \cref{eq:psi1}. The surface $b=0$ is tangential to the cylinder $p=0$ at $\theta=\theta_1$: (a) $\varphi=-\frac{\pi}{2}$, (b) $\varphi \in (-\pi,0])$. Both figures use $\alpha=3$, $\mu=1.4$.}
    \label{fig:psi1.9483} 
\end{figure}

The final critical value of $\Psi$ occurs when the surface $b=0$ is tangential to the cylinder $p=0$ at $\theta=\theta_2$, where $\theta_2$ is given in \cref{eq:theta12}. This happens when $\Psi=\pm\Psi_2$, where
\begin{equation}\label{eq:psi2}
    \Psi_2(\mu,\alpha) := \frac{(1+\tan^2 \theta_2)^{\frac{3}{4}}}{\left(\tan \theta_2\right)^{\frac{1}{2}}}.
\end{equation} 
When $\alpha=3$, $\mu=1.4$, this corresponds to $\Psi_2 = 3.0097$ with $\theta_2=1.2209$. We show this case in \cref{fig:psi3.0097}; as a $2D$ section with $\varphi=-\frac{\pi}{2}$ in \cref{fig:psi3.00972D} and for $\varphi \in (-\pi,0]$ in \cref{fig:psi3.00973D}. When $|\Psi| \ge \Psi_2$, there is no inconsistent paradox.
\begin{figure}[htbp]
    \centering
    \begin{subfigure}[t]{0.41\textwidth}
        \begin{overpic}[width=\textwidth]{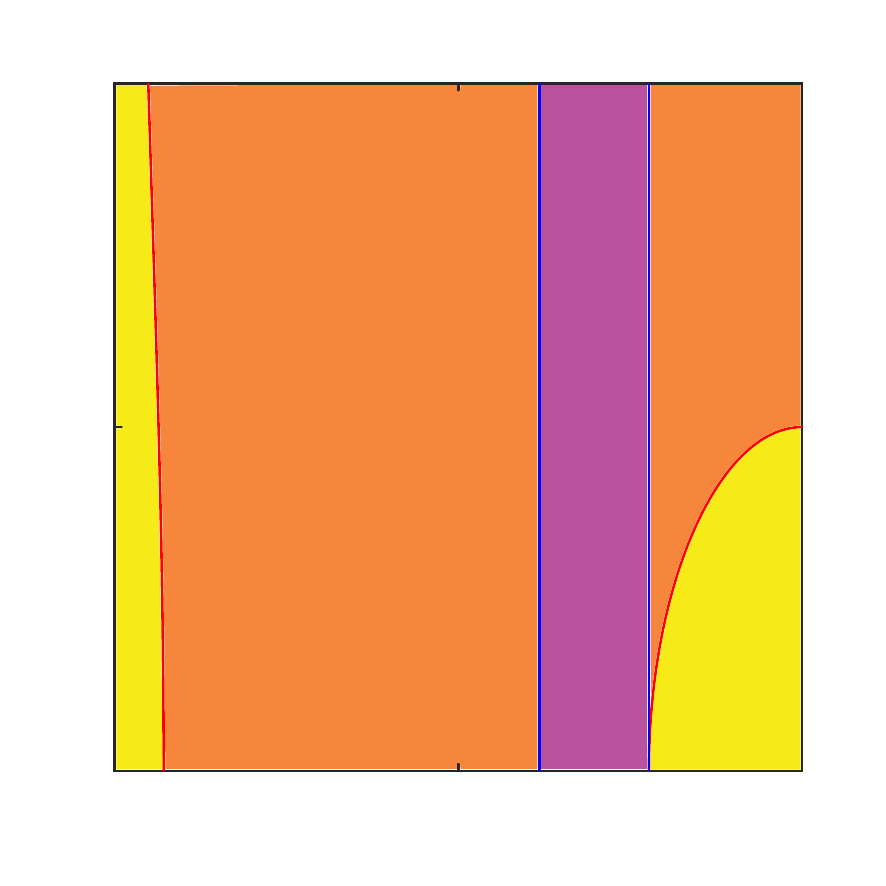}
            \put(14,8){0}
            \put(50,8){$\frac{\pi}{4}$}
            \put(88,8){$\frac{\pi}{2}$}
            \put(60,0){$\theta$}
            \put(0,60){$\Theta$}
            \put(8,13){0}
            \put(8,50){1}
            \put(8,89){2}
%            \put(10,97){\textcolor{myred}{$b=0$}}
 %          \put(76,56){\textcolor{myred}{$b=0$}}
  %          \put(82,51){\textcolor{myred}{\textbf{\textbackslash}}}
   %         \put(61,97){\textcolor{myblue}{$p=0$}}
    %        \put(71,92){\textcolor{myblue}{\textbf{\textbackslash}}}
     %       \put(61,92){\textcolor{myblue}{\textbf{/}}}
      %      \put(14,15){\rotatebox{90}{ {Slipping}}}
            %\put(15,20){Slipping}
           % \put(30,50){Lift-off}
            %\put(63,17){\rotatebox{90}{ {Incon.}}}
        %    \put(63,30){\rotatebox{90}{ {Indeterminate}}}
                   \put(32,30){Slipping}
             \put(15,30){--------}
            \put(30,65){Lift-off}
            \put(68,65){--------------}
            \put(95,65){Indeterminate}
        \end{overpic}
        \caption{$\Psi =\pm\Psi_2$, for $\varphi=-\frac{\pi}{2}$.}
        \label{fig:psi3.00972D}
    \end{subfigure}
    \hfill
    \begin{subfigure}[t]{0.41\textwidth}
        \begin{overpic}[width=\textwidth]{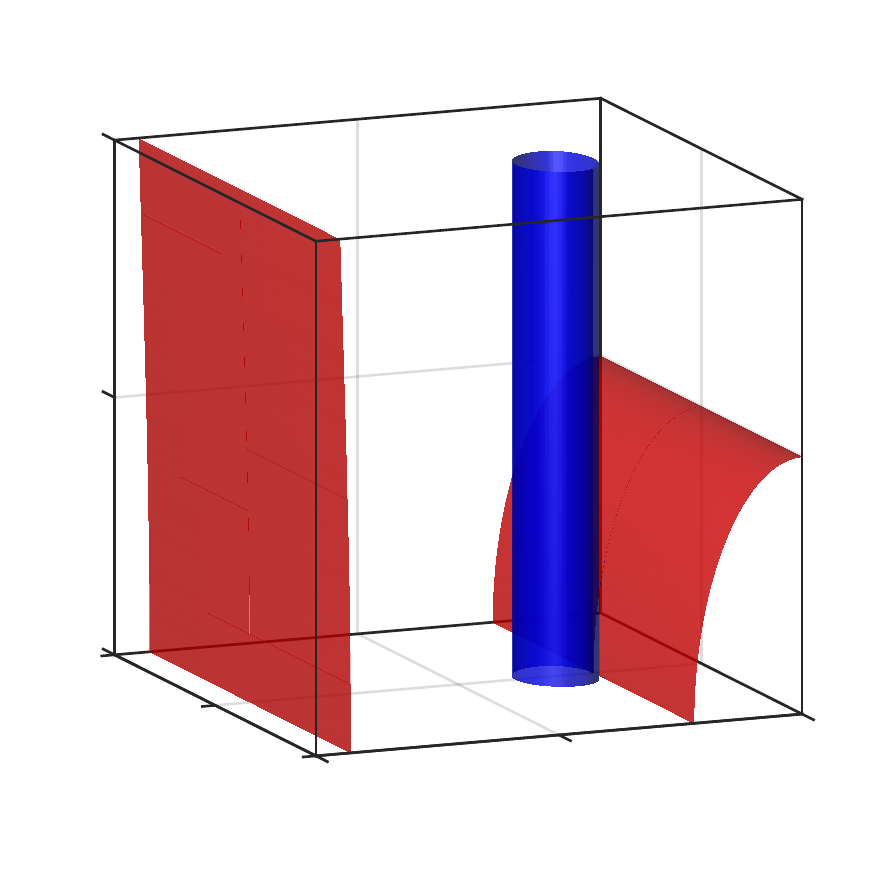}
            \put(38,9){0}
            \put(64,10){$\frac{\pi}{4}$}
            \put(92,13){$\frac{\pi}{2}$}
            \put(75,0){$\theta$}
            \put(0,65){$\Theta$}        
            \put(8,55){1}
            \put(8,84){2}
            \put(10,10){$\varphi$}
            \put(13,17){$-\frac{\pi}{2}$}
            \put(24,12){$-{\pi}$}
            \put(8,25){$0$}
        \end{overpic}
        \caption{$\Psi =\pm\Psi_2$, for $\varphi \in (-\pi,0]$.}
        \label{fig:psi3.00973D}
    \end{subfigure}
    \caption{Areas of different dynamics for $3D$ motion when $\Psi =\pm\Psi_2$, from \cref{eq:psi2}. The inconsistent paradox disappears for $|\Psi| \ge \Psi_2$: (a) $\varphi=-\frac{\pi}{2}$, (b) $\varphi \in (-\pi,0]$. Both figures use $\alpha=3$, $\mu=1.4$.}
    \label{fig:psi3.0097} 
\end{figure}

\subsection{Generic behaviour}
\label{subsec:generic}
As we have seen, variation in $\Psi$ causes changes to the shape of the surface $b=0$ projected into $(\theta,\varphi,\theta)$ and hence to the topology of the GB manifold $b=p=0$ in this view.
In fact there are three different \textit{mechanisms} I-III by which the GB manifold deforms, for fixed $\mu$, as $\Psi$ increases. They are illustrated in \cref{fig:mechanisms}, as follows:
\begin{itemize}
\item[I] ($\mu<\mu_\mathrm{P}$): there is no paradox and the rod can either slip or lift-off. 
    \begin{itemize}
        \item For  $|\Psi|<\Psi_\mathrm{L}$, we have case \circled{1}.
        \item For  $|\Psi|>\Psi_\mathrm{L}$, we have case \circled{2}.
    \end{itemize}
\item[II] ($\mu_\mathrm{P}<\mu<\mu_\mathrm{L}$): $p<0$ for $\theta\in(\theta_-,\theta_+)$ and so a Painlev\'{e} paradox can occur.
    \begin{itemize}
        \item For $\Psi<|\Psi_\mathrm{L}|$, we have case \circled{3} and all four modes (slipping, lift-off, inconsistent and indeterminate) are possible.
        \item For $\Psi_\mathrm{L}<|\Psi|<\Psi_1$, lift-off is possible for $\Theta=0$; case \circled{4}.
        \item For $\Psi_1<|\Psi|<\Psi_2$, both lift-off and indeterminate modes as possible for $\Theta=0$; case \circled{6}.
        \item For $|\Psi|>\Psi_2$, the inconsistent mode ceases to exist; case \circled{7}.
    \end{itemize}
\item[III] ($\mu>\mu_\mathrm{L}$): $p<0$ for $\theta\in(\theta_-,\theta_+)$ and so a Painlev\'{e} paradox can occur. Mechanism III is the same as mechanism II except when $\Psi_\mathrm{L}<|\Psi|<\Psi_1$, where the indeterminate mode (rather than lift-off) is possible for $\Theta=0$; case \circled{5}.
\end{itemize}

\begin{figure}[htbp]
    \centering
    \vspace{8pt}
    \begin{overpic}[width=0.9\textwidth]{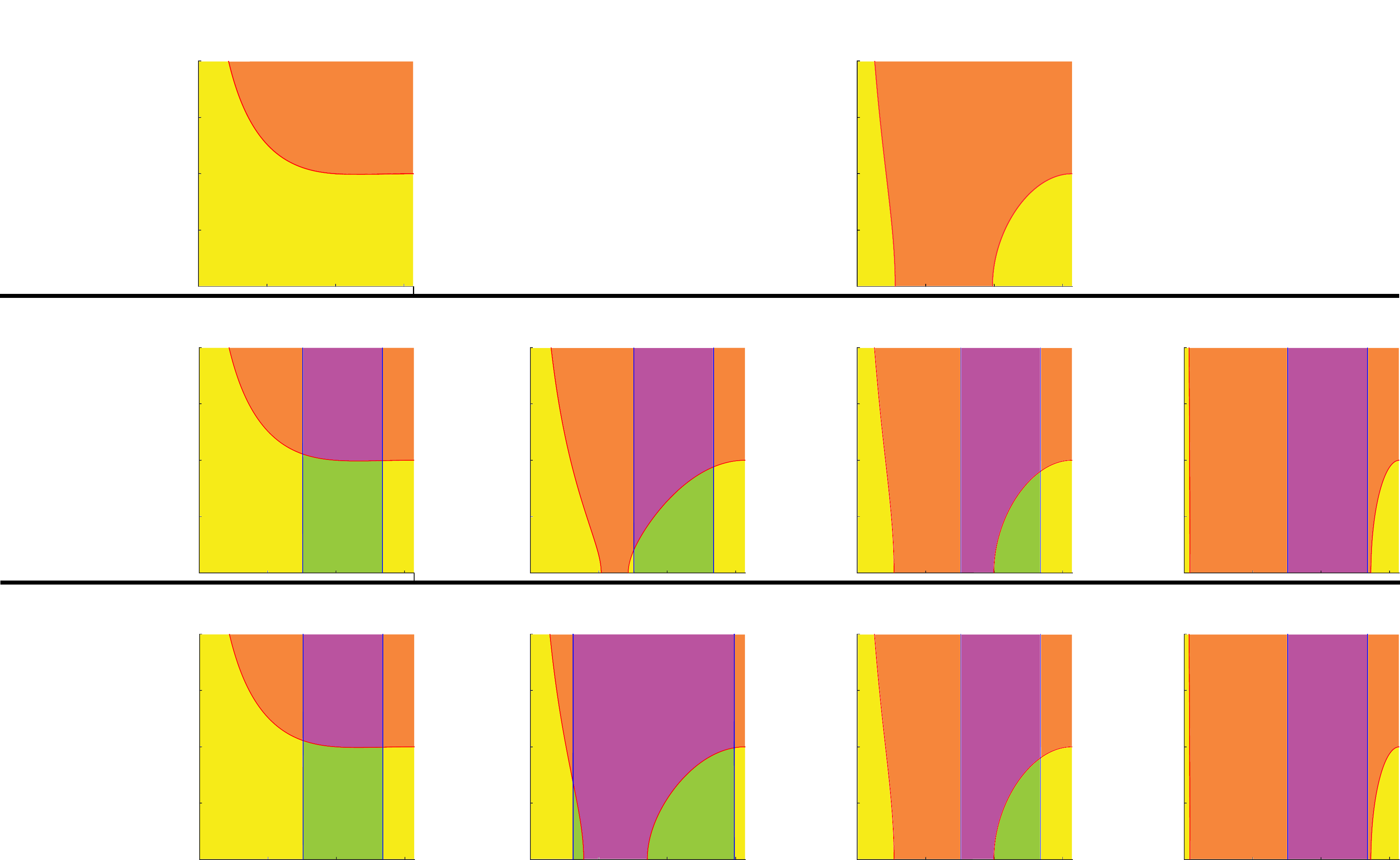}
      \put(5,10){III}
    \put(3,3){\scriptsize{$\mu>\mu_\mathrm{L}$}}
    \put(5,30){II}
    \put(3,26){\scriptsize{$\mu\in$}}
    \put(3,23){\scriptsize{$(\mu_\mathrm{P},\mu_\mathrm{L})$}}
    \put(5,50){I}
    \put(3,43){\scriptsize{$\mu<\mu_\mathrm{P}$}}
    {\color{blue}
    \put(40,62){increasing $\Psi$}\put(10,61){\vector(1,0){80}}
    \put(30,8){\vector(1,0){7}}
    \put(53.5,8){\vector(1,0){7}}
    \put(77,8){\vector(1,0){7}}
    \put(30,28){\vector(1,0){7}}
    \put(53.5,28){\vector(1,0){7}}
    \put(77,28){\vector(1,0){7}}
    \put(30,48){\vector(1,0){30}}
    }
    \put(20,17){\scriptsize{\circled{3}}}
    \put(20,37.5){\scriptsize{\circled{3}}}
    \put(20,58){\scriptsize{\circled{1}}}
    \put(44,17){\scriptsize{\circled{5}}}
    \put(44,37.5){\scriptsize{\circled{4}}}
    \put(67,17){\scriptsize{\circled{6}}}
    \put(67,37.5){\scriptsize{\circled{6}}}
    \put(67,58){\scriptsize{\circled{2}}}
    \put(90.5,17){\scriptsize{\circled{7}}}
    \put(90.5,37.5){\scriptsize{\circled{7}}}
    \end{overpic}
    \caption{Mechanisms I--III for the $3D$ Painlev\'e problem, for increasing $\Psi$.  }
    \label{fig:mechanisms}
\end{figure}

The different types of behaviour labelled \circled{1}--\circled{7} occur for different values of $\Psi$ as a function of $\mu$, as shown in \cref{fig:muPsicases}. Mechanisms I--III occur at different values of $\mu$ as a function of $\alpha$, as shown in \cref{fig:alphamumech}.

In \cref{fig:muPsicases}, we show $\Psi_\mathrm{L}$, $\Psi_1$ and $\Psi_2$ as functions of $\mu$ for $\alpha=3$ when $\Psi>0$. 
%Both $\Psi_{1,2} \to \infty$ as $\mu \to \infty$. 
At $\mu=\mu_\mathrm{P}(\alpha)$, we have
\begin{equation}\label{eq:psip}
    \Psi_1(\mu_\mathrm{P}(\alpha),\alpha)=\Psi_2(\mu_\mathrm{P}(\alpha),\alpha)=\Psi_\mathrm{P}(\alpha):=\sqrt[4]{\frac{(2+\alpha)^3}{(1+\alpha)}}.
\end{equation} 

So $\Psi_\mathrm{P} \approx 2.3644$ when $\alpha=3$. Note that $\Psi_1 \ge \Psi_\mathrm{L}$, with equality occurring when  $\theta_1=\theta_\mathrm{L} \approx 0.6155$ from \cref{eq:thetalvalue}, when 
\begin{equation}\label{eq:mul}
    \mu=\mu_\mathrm{L}(\alpha):=\frac{2\alpha+3}{\alpha \sqrt{2}}.
\end{equation}
%where $\theta_\mathrm{L}<\theta_1$ for $\mu<\mu_\mathrm{L}$, and $\theta_\mathrm{L}>\theta_1$ for $\mu>\mu_\mathrm{L}$.
For a uniform slender rod with $\alpha=3$, $\mu_\mathrm{L}(3)=\frac{3}{\sqrt{2}} \approx 2.1213$. 

In \cref{fig:alphamumech}, we show $\mu_\mathrm{P}(\alpha)$ from \cref{eq:mup2D} and $\mu_\mathrm{L}(\alpha)$ from \cref{eq:mul}.
\begin{figure}[htbp]
    \centering
    \begin{subfigure}[b]{0.47\textwidth}
    \begin{overpic}[width=\textwidth]{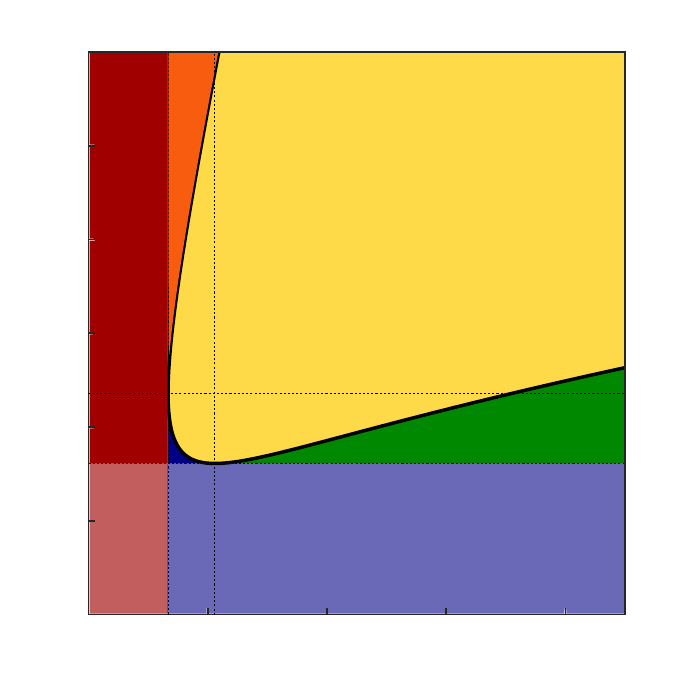}
        {
        \put(14,20){\circled{1}}
        \put(14,60){\circled{2}}
        \put(24,86){\circled{7}}
        \put(60,70){\circled{6}}
        \put(75,36){\circled{5}}
        \put(55,20){\circled{3}}
        \put(25.5,33.5){\line(1,2){4}}
        \put(27.5,43.5){\circled{4}}}
        %\put(25,35){{\LARGE{/}}}
        \put(12,6){0}
        \put(22,6){$\mu_\mathrm{P}$}
        \put(30,6){$\mu_\mathrm{L}$}
        \put(8,10){$0$}
        \put(6,32){$\Psi_\mathrm{L}$}
        \put(6,42){$\Psi_\mathrm{P}$}
        \put(35,85){$\Psi=\Psi_2$}
        \put(60,50){$\Psi=\Psi_1$}
        \put(0,55){$\Psi$}
        \put(80,6){8}
        \put(46,6){4}
        \put(7,64){4}
        \put(70,3){$\mu$}
        \end{overpic}
        \caption{}
        \label{fig:muPsicases}
    \end{subfigure}
    \hfil
     \begin{subfigure}[b]{0.47\textwidth}
    \begin{overpic}[width=0.95\textwidth]{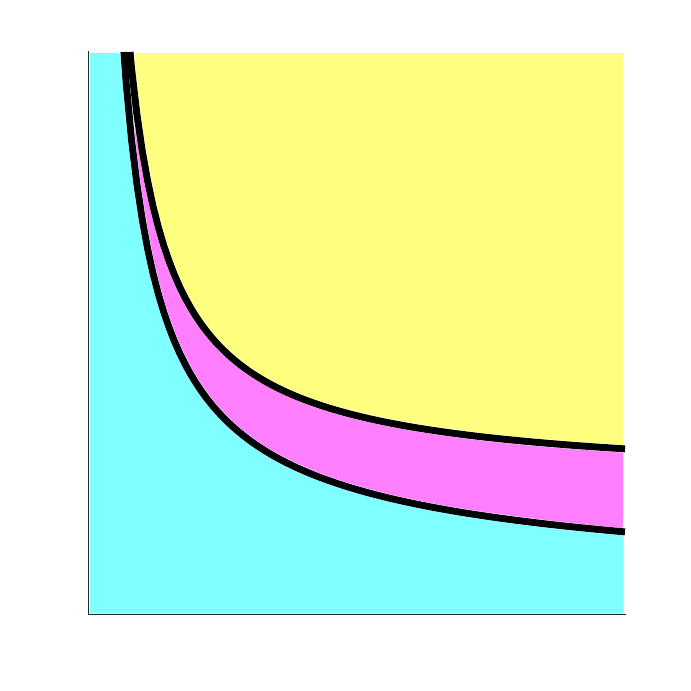}
    \put(50,15){$\mu=\mu_\mathrm{P}(\alpha)$}
    \put(49,17){\vector(-1,2){7.5}}
    \put(60,45){$\mu=\mu_\mathrm{L}(\alpha)$}
    \put(58,45){\vector(-3,-2){6}}
    \put(20,20){I}
    \put(65,29){II}
    \put(70,70){III}
    \put(0,50){$\mu$}
    \put(70,3){$\alpha$}
    \end{overpic}
    \caption{}
    \label{fig:alphamumech}
    \end{subfigure}
    \caption{(a) Regions where the seven types  of kinematic behaviour occur in the $3D$ Painlev\'e problem for $\alpha=3$. (b) Regions where mechanisms I--III occur in the $3D$ Painlev\'e problem. Note that $\mu_\mathrm{P}(\alpha)$ is given in \cref{eq:mup2D}, $\mu_\mathrm{L}(\alpha)$ in \cref{eq:mul}, $\Psi_\mathrm{L}$ in \cref{eq:psil}, $\Psi_\mathrm{P}$ in \cref{eq:psip}, $\Psi_1$ in \cref{eq:psi1}, $\Psi_2$ in \cref{eq:psi2}.}
    \label{fig:CaseIs}
\end{figure}
In \cref{fig:Psistheta}, we show the critical polar angles $\theta_\mathrm{L}$, $\theta_1$, $\theta_2$ as functions of $\mu$ for $\alpha=3$. At $\mu=\mu_\mathrm{P}(\alpha)$, $\theta_1=\theta_2=\theta_\mathrm{P}(\alpha)$, see \cref{eq:thetap}. It is straightforward to show that $\theta_2>\theta_\mathrm{L} \, \forall \alpha, \mu$. 
\begin{figure}[htbp]
    \centering
          \begin{overpic}[width=0.47\textwidth]{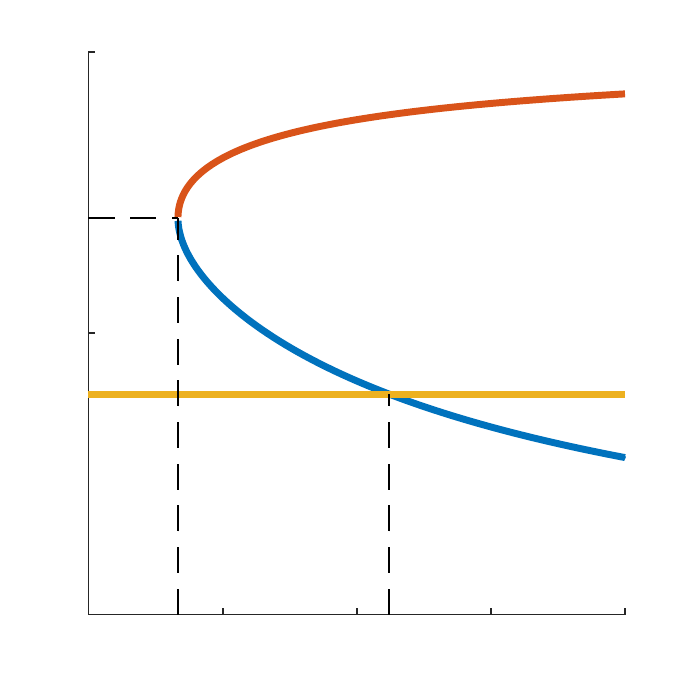}
        \put(-5,50){$\theta$}
        \put(8,50){$\frac{\pi}{4}$}
        \put(8,90){$\frac{\pi}{2}$}
        \put(9,10){0}
        \put(12,5){1}
        \put(5,42){$\theta_\mathrm{L}$}
        \put(30,85){$\theta=\theta_2$}
        \put(70,29){$\theta=\theta_1$}
        \put(5,70){$\theta_\mathrm{P}$}
        \put(22,6){$\mu_\mathrm{P}$}
        \put(55,6){$\mu_\mathrm{L}$}
        \put(47,0){$\mu$}
        \put(89,6){3}
        %\put(33.7,23.3){$\bullet$}
        \end{overpic}
    \caption{Critical values $\theta_\mathrm{L}$, $\theta_1$ and $\theta_2$ as functions of $\mu$ for $\alpha=3$. Here $\theta_\mathrm{L} =\arcsin{\frac{1}{\sqrt{3}}} \approx 0.6155$ from \cref{eq:thetal} and $\theta_\mathrm{P} \approx 1.1071$ from \cref{eq:thetap}. $\theta_1, \theta_2$ are given in \cref{eq:theta12} and $\mu_\mathrm{P} = \frac{4}{3}$, $\mu_\mathrm{L}=\frac{3}{\sqrt{2}}$.}
    \label{fig:Psistheta} 
\end{figure}

In \cref{fig:mechanisms}, we show only the generic cases of the geometry of the surfaces $p=0$ and $b=0$ for fixed $\Psi$. In \cref{fig:mechbifs} in \cref{sec:geobifs}, we show all the bifurcations between these cases with changes in the variable $\Psi$ and the parameter $\mu$.

\subsection{GB manifolds}
\label{subsec:GB}
The GB manifold is the intersection of the codimension-1 sets $b=0$ and $p=0$, and hence is codimension-2.
For the $2D$ problem ($\varphi=\pm\pi/2$, $\Psi=0$) this intersection corresponds to 4 points ($P^\pm$, $Q^\pm$) when projected into the plane $(\theta,\Theta)$ if $\mu>\mu_P$, see \cref{fig:p3s} and \cite[fig. 2]{Genot1999}.
For the $3D$ problem, this codimenion-2 set is more difficult to visualise. Here, we project this set into $(\theta,\varphi,\Theta)$ for fixed $\Psi$. For a given $\Psi$, this set can take a number of different forms in  $(\theta,\varphi,\Theta)$, or it may not exist.
% In $2D$, the GB manifold consists of four points $P^{\pm}, Q^{\pm}$. In $3D$, it can take on a number of different forms, or it may not exist. In \cref{fig:GBs_bif3}, we show the GB manifold in $\Theta \gtrless 0$ for representative values of $\Psi$ when $\alpha=3$ with $\mu=1.4$ and $\mu=6$. There is no GB manifold in the $3D$ problem when $|\Psi|> \Psi_2$.
In \cref{fig:GBmanifoldmu1p4} $\mu = 1.4 \in (\mu_\mathrm{P},\mu_\mathrm{L})$ corresponds to mechanism II. Since $\Psi_\mathrm{L}=1.6118$, $\Psi_1=1.9480$ and $\Psi_2=3.0097$ then $\Psi=0$ ({\color{md2}{$\blacksquare$}}) corresponds to type \circled{3} behaviour, $\Psi=1.7800$ ({\color{md3}{$\blacksquare$}}) to type \circled{4} and $\Psi=2.4789$ ({\color{md5}{$\blacksquare$}}) to type \circled{6}. In \cref{fig:GBmanifoldmu6} $\mu = 6 > \mu_\mathrm{L}$ corresponds to mechanism III. Since $\Psi_\mathrm{L}=1.6118$, $\Psi_1=2.1876$ and $\Psi_2=17.8171$ then $\Psi=0$ ({\color{md2}{$\blacksquare$}}) corresponds to type \circled{3} behaviour, $\Psi=1.8997$ ({\color{md3}{$\blacksquare$}}) to type \circled{5} and $\Psi=10.0024$ ({\color{md5}{$\blacksquare$}}) to type \circled{6}. 

\begin{figure}[htbp]
\centering
\begin{subfigure}{0.45\textwidth}
\begin{overpic}[width=\textwidth]{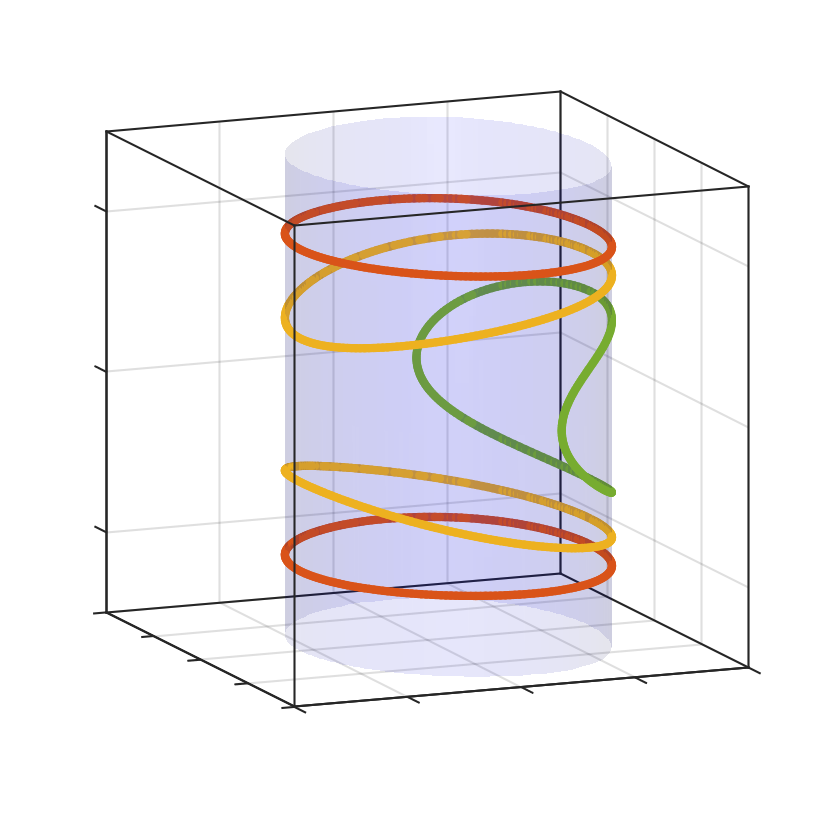}
\put(8,55){0}
\put(8,75){1}
\put(6,36){-1}
\put(0,60){$\Theta$}
\put(1,23){$-\frac{3\pi}{8}$}
\put(13,18){$-\frac{\pi}{2}$}
\put(23,11){$-\frac{5\pi}{8}$}
\put(5,10){$\varphi$}
\put(76,12.5){$\frac{3\pi}{8}$}
\put(50,9){$\frac{5\pi}{16}$}
\put(80,2){$\theta$}
\end{overpic}
\caption{$\mu=1.4$}
\label{fig:GBmanifoldmu1p4}
\end{subfigure}
\hfil
\begin{subfigure}{0.45\textwidth}
\begin{overpic}[width=\textwidth]{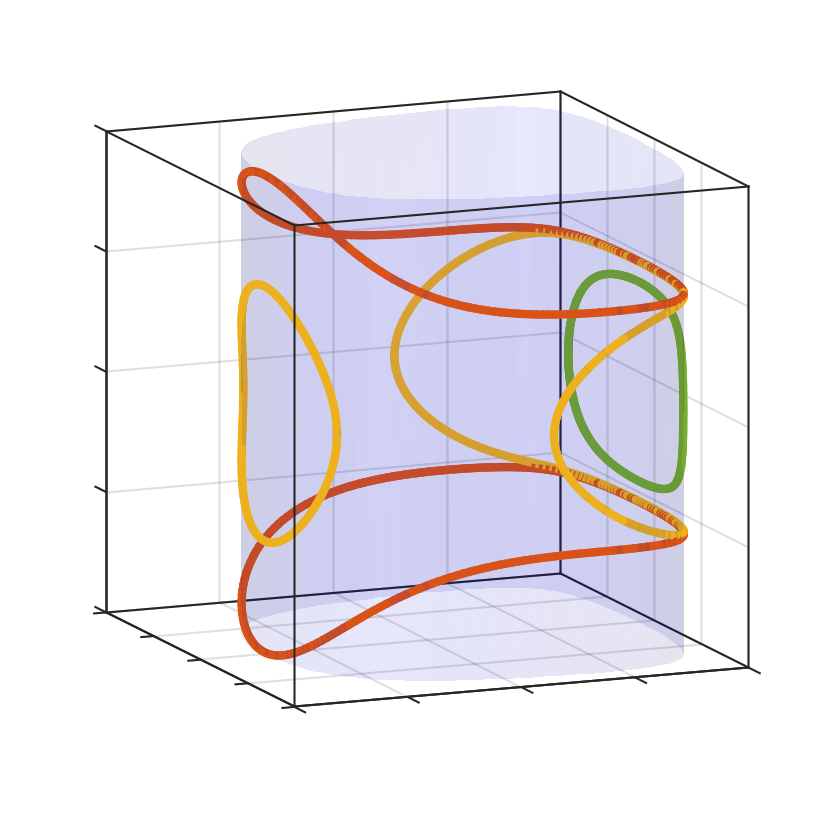}
\put(8,55){0}
\put(8,85){2}
\put(6,27){-2}
\put(0,60){$\Theta$}
\put(9,22){$0$}
\put(13,18){$-\frac{\pi}{2}$}
\put(26,12){$-\pi$}
\put(5,10){$\varphi$}
\put(36,9){$0$}
\put(64,11){$\frac{\pi}{4}$}
\put(91,13){$\frac{\pi}{2}$}
\put(80,2){$\theta$}
\end{overpic}
\caption{$\mu=6$}
\label{fig:GBmanifoldmu6}
\end{subfigure}
\caption{GB manifolds for $\alpha=3$. (a) mechanism II: $\mu=1.4$ and $\Psi=0<\Psi_\mathrm{L}$ ({\color{md2}{$\blacksquare$}}), $1.7800\in(\Psi_\mathrm{L},\Psi_1)$ ({\color{md3}{$\blacksquare$}}), $2.4789\in(\Psi_1,\Psi_2)$ ({\color{md5}{$\blacksquare$}}). (b) mechanism III: $\mu=6$ and $\Psi=0<\Psi_\mathrm{L}$ ({\color{md2}{$\blacksquare$}}), $1.8997\in(\Psi_\mathrm{L},\Psi_1)$ ({\color{md3}{$\blacksquare$}}), $10.0024\in(\Psi_1,\Psi_2)$ ({\color{md5}{$\blacksquare$}}). See also \cref{fig:mechbifs}}
\label{fig:GBs_bif3}
\end{figure}

\section{Slipping dynamics near the GB manifold}
\label{sec:slip}
In the previous section, we have seen how the surfaces $b=0$ and $p=0$ divide  phase space. But this kinematic analysis does not tell us how the rod, when slipping, can move into another region (or mode). In this section, we consider the rod dynamics close to $b=p=0$, in order to see which transitions from slipping\footnote{We do not consider movements from the other modes for obvious reasons.} are allowed.

In the classical $2D$ Painlev\'{e} problem, for $\mu>\mu_\mathrm{P}$, G\'enot and Brogliato \cite{Genot1999} showed that slipping trajectories starting with $\theta \in (0,\theta_1)$ cannot pass through the boundary $p=0$ of the Painlev\'{e} region without also passing through the lift-off boundary $b=0$. In that case, the rod must pass through the point $P^+$ of \cref{fig:pandb} (the rod can also stick, of course)\footnote{Trajectories starting with $\theta \in (\theta_2, \frac{\pi}{2})$ are all directed away from the point $Q^+$ in \cref{fig:pandb}, so the line $p=0$ can not be crossed there at all. The line $p=0$ can not be crossed from slipping anywhere in $\Theta <0$ \cite{Genot1999}.}.

G\'enot and Brogliato \cite{Genot1999} also found a new critical value $\mu_\mathrm{C}(\alpha)$ of the coefficient of friction  given, for general $\alpha$ \cite{Or2012}, by
\begin{align}
\label{eq:muC}
\mu_\mathrm{C}(\alpha) & :=\frac{2}{\sqrt{3}}\mu_\mathrm{P}(\alpha)=\frac{4}{\alpha}\sqrt{\frac{\alpha+1}{3}}.
\end{align} 
The importance of $\mu>\mu_\mathrm{C}$ in the $2D$ problem is summarised in \cite[Theorem 1]{Or2012}. For $\mu \in (\mu_\mathrm{P},\mu_\mathrm{C}]$, all slipping solutions undergo lift-off before reaching inconsistency; the point $P^+$ cannot be reached. For $\mu>\mu_\mathrm{C}$, there is a region of initial conditions where $P^+$ is reached, where the normal reaction force becomes unbounded.

In this section, we investigate the slipping dynamics close to the GB manifold $b=p=0$ in the $3D$ problem. In \cite[section 7.2]{Champneys2016}, it was shown that system trajectories can cross $p=0$ transversely away from the GB manifold because $p(\theta,\varphi)$ tends to zero faster than $b(\Psi,\Theta,\theta)$ (see also numerical evidence in \cite[Fig. 17]{Champneys2016}). 

In fact we can show that crossing of $p=0$ away from $b=0$ is the norm in $3D$ and that the $2D$ problem is highly singular. From \cref{eq:p,eq:polareqs_varphi}, we have that
\begin{equation}\label{eq:pdotarg}
    \begin{split}
    \left.\dot{p}\right|_{p=0}&=\left(\pdiff{p}{\theta}\dot{\theta}+\pdiff{p}{\varphi}\dot{\varphi}\right)_{p=0}\\
    &=\frac{1}{\eta}(1+\alpha)\alpha\mu\cos^2\theta\cos^2\varphi\, b.
    \end{split}
\end{equation}
So, away from $\theta=\pi/2$ and $\varphi=-\pi/2$, we find that $\left.\sign{\left(\dot{p}\right)}\right|_{p=0}=\sign{\left(b\right) }$. When slipping, we have $b<0$, so in this mode we have $\left.\sign{\left(\dot{p}\right)}\right|_{p=0}=\sign{\left(b\right) } < 0$. Hence we move from $p=0$ into $p<0$. But since $b<0$, the rod moves into the inconsistent region. 

There are two cases where $\left.\sign{\left(\dot{p}\right)}\right|_{p=0}=0$ $\forall \Psi$. When $\theta=\pi/2$, the rod is vertical and we have excluded this case. The other exception is when $\varphi=-\pi/2$. When $\Psi=0$, this is the classical $2D$ Painlev\'{e} problem. In other words, the $2D$ problem is the \textit{exception} in requiring that $p=0$ can be crossed only when $b=0$.

But the result in \cref{eq:pdotarg} assumes that there is an inconsistent region, which is not the case if $|\Psi|>\Psi_2$. So we need to understand in more detail the dynamics of the $3D$ problem \cref{eq:polareqs_varphi} near $b=p=0$.

To proceed, we disregard equations for $\dot{x}$ and $\dot{y}$ in \cref{eq:polareqs_varphi}, as they are cyclic. When the rod is slipping, $z=w=0$, and so $F_z=-b/p$ from \cref{eq:polareqs_varphi}. If we substitute this result into the remaining six equations in \cref{eq:polareqs_varphi} and transform time\footnote{The new time $s$ is given by $\eta p ds=dt$. This step preserves orbits. It also reverses time for $p<0$, but this does not concern us since then the rod would be in a paradox.} in order to de-singularise $p=0$ and $\eta=0$, we find
\begin{equation}
\label{eq:general}
{\vec{x}}'=\vec{f}(\vec{x})
\end{equation}
where
\begin{equation}
\label{eq:x}
\vec{x}=(\eta, \varphi, \psi, \Psi, \theta, \Theta)^\intercal,
\end{equation}
differentiation with respect to the new time is denoted by a dash and
\begin{equation}
\begin{split}
\label{eq:f}
\vec{f}(\vec{x}) & =(f_{\eta},f_{\varphi},f_{\psi},f_{\Psi},f_{\theta},f_{\Theta})^\intercal \\
&=\begin{pmatrix}
\eta\left(-Q_1 b + A_1 p\right)\\
 -Q_2 b +( A_2 -\eta\Psi)p\\
  \eta p \Psi\\
   \eta\left(-d_1 b + c_1 p\right)\\
   \eta p \Theta\\
   \eta\left(-d_2 b + c_2 p\right))\\
   \end{pmatrix}.
\end{split}
\end{equation}

The GB manifold ($\{p(\theta,\varphi)=0\}\cap\{b(\Psi,\Theta,\theta)=0\}$) is a set of equilibria of \cref{eq:f}. Let us then consider a point on the GB manifold, where $\theta=\theta_\mathrm{GB}$, $\varphi=\varphi_\mathrm{GB}$, $\Psi=\Psi_\mathrm{GB}$, $\Theta=\Theta_\mathrm{GB}$ (such that $b(\Psi_\mathrm{GB},\Theta_\mathrm{GB},\theta_\mathrm{GB})=0$ and $p(\theta_\mathrm{GB},\varphi_\mathrm{GB})=0$). We linearise \cref{eq:general} about this point to find
\begin{gather}
\label{eq:gatheredish}
 \begin{pmatrix}\eta \\   \varphi \\  \psi \\  \Psi \\ \theta \\ \Theta   \end{pmatrix}'
 =
  \begin{pmatrix}
0 & \eta A_1p_{,\varphi}           & 0 & -\eta Q_1 b_{,\Psi}    & \eta(A_1p_{,\theta}-Q_1 b_{,\theta})                 & -\eta Q_1 b_{,\Theta}  \\
0 & (A_2 -\eta\Psi)p_{,\varphi} & 0 & -Q_2 b_{,\Psi}          & (A_2-\eta\Psi)p_{,\theta}-Q_2 b_{,\theta}                & -Q_2 b_{,\Theta}  \\
0 & \eta\Psi p_{,\varphi}           & 0 & 0                              & \eta\Psi p_{,\theta}                                                  & 0 \\
0 & \eta c_1p_{,\varphi}           & 0 & -\eta d_1 b_{,\Psi}    & \eta(c_1p_{,\theta}-d_1 b_{,\theta})                    & -\eta d_1 b_{,\Theta}  \\
0 & \eta\Theta p_{,\varphi}      & 0 & 0                             & \eta\Theta p_{,\theta}                                               & 0 \\
0 & \eta c_2p_{,\varphi}          & 0 & -\eta d_2 b_{,\Psi}    & \eta(c_2p_{,\theta}-d_2 b_{,\theta})                     & -\eta d_2 b_{,\Theta}  \\
   \end{pmatrix} 
 \begin{pmatrix} \eta \\   \varphi \\ \psi \\ \Psi \\ \theta \\  \Theta   \end{pmatrix}
\end{gather}
where $(p_{,q}, b_{,q})=\left(\pdiff{p}{q}, \pdiff{b}{q}\right)$, the square matrix is evaluated on $b=p=0$, (that is, at $\theta=\theta_\mathrm{GB}$, $\varphi=\varphi_\mathrm{GB}$, $\Psi=\Psi_\mathrm{GB}$, $\Theta=\Theta_\mathrm{GB}$)
and $\varphi, \Psi, \theta, \Theta$ now denote perturbations from the GB manifold.

Then \cref{eq:gatheredish} can be written in the form
\begin{equation}
\label{eq:jacobian}
\vec{x}'=\mat{{J}}\vec{x}=\mat{{A}}\,\mat{{B}}\,\vec{x}=\left(\frac{\partial}{\partial (p, b)} \vec{f} \right)\left(\frac{\partial}{\partial\vec{x}^\intercal} (p, b)^\intercal \right)\vec{x}
\end{equation}
where $\mat{{J}}$ is the Jacobian of $\vec{f}(\vec{x})$ and the matrices $\mat{{A}}$ and $\mat{{B}}$ are given by
\begin{align}
\label{eq:matA}
\mat{{A}} & := \begin{pmatrix}
f_{\eta,p} & f_{\eta,b}\\
f_{\varphi,p} & f_{\varphi,b}\\
f_{\psi,p} & f_{\psi,b}\\
f_{\Psi,p} & f_{\Psi,b}\\
f_{\theta,p} & f_{\theta,b}\\
f_{\Theta,p} & f_{\Theta,b}\\
   \end{pmatrix}
   =\begin{pmatrix}
\eta A_1 & -\eta Q_1\\
A_2-\eta \Psi & -Q_2\\
\eta \Psi & 0\\
   \eta c_1 & -\eta d_1\\
\eta \Theta & 0\\
   \eta c_2 & -\eta d_2\\
   \end{pmatrix}
\end{align}
and
\begin{align}
\label{eq:matB}
\mat{{B}} & := \begin{pmatrix}
p_{,\eta} & p_{,\varphi} & p_{,\psi} & p_{,\Psi} & p_{,\theta} & p_{,\Theta}\\
b_{,\eta} & b_{,\varphi} & b_{,\psi} & b_{,\Psi} & b_{,\theta} & b_{,\Theta}\\
\end{pmatrix}\\
&=\begin{pmatrix}
0 & p_{,\varphi} & 0 & 0 & p_{,\theta} & 0\\
0 & 0 & 0 & b_{,\Psi} & b_{,\theta} & b_{,\Theta}\\
\end{pmatrix}.
\end{align}
In order to understand the dynamics near $b=p=0$, we must calculate the eigenvalues and eigenvectors of the Jacobian $\mat{J}=\mat{A}\, \mat{B}$. To do this we use the following result \cite[Proposition 4.4.10]{Bernstein2009}.

\begin{lemma}
If an $m\times m$ matrix $\mat{J}$ can be expressed as the matrix product of $\mat{J}=\mat{A}\,\mat{B}$ (where $\mat{A}$ is an $m\times n$ matrix, $\mat{B}$ is an $n\times m$ matrix and $m\geq n$), then the eigenvalues of $\mat{J}$ are the $n$ eigenvalues of $\mat{K}=\mat{B}\,\mat{A}$ and $m-n$ zero eigenvalues.  If $\lambda$ is an eigenvalue of $\mat{K}$, then it is also an eigenvalue of the $\mat{J}$. 

% Furthermore\john{Do we ever use this part of the remark?}\noah{Good point. I've only used this in my numerics for \cref{fig:my_ugly_simulations}}, if $\vec{v}$ is the eigenvector of $\mat{J}$ corresponding to the eigenvalue $\lambda$, then the eigenvector $\vec{u}$ of $\mat{K}$ corresponding to $\lambda$ is given by $\vec{u}=\mat{B}\vec{v}$.
\end{lemma}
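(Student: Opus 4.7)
The plan is to reduce the claim to the classical characteristic polynomial identity
$$
\det(\lambda \mat{I}_m - \mat{A}\mat{B}) \;=\; \lambda^{m-n}\,\det(\lambda \mat{I}_n - \mat{B}\mat{A}),
$$
which immediately reads off the eigenvalue multiset of $\mat{J}=\mat{A}\mat{B}$ as those of $\mat{K}=\mat{B}\mat{A}$ together with $m-n$ copies of zero.

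To establish the identity I would use the standard Schur complement trick on the $(m+n)\times(m+n)$ block matrix
$$
\mat{M} \;=\; \begin{pmatrix} \lambda\mat{I}_m & \mat{A} \\ \mat{B} & \mat{I}_n \end{pmatrix}.
$$
Clearing the $(2,1)$ block by a block row operation gives $\det\mat{M}=\det(\lambda\mat{I}_m-\mat{A}\mat{B})\cdot\det(\mat{I}_n)$. Clearing the $(1,2)$ block instead (valid for $\lambda\neq0$) gives $\det\mat{M}=\det(\lambda\mat{I}_m)\cdot\det\!\bigl(\mat{I}_n-\tfrac{1}{\lambda}\mat{B}\mat{A}\bigr)=\lambda^{m-n}\det(\lambda\mat{I}_n-\mat{B}\mat{A})$. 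Equating the two expressions and extending by continuity in $\lambda$ yields the claimed identity for all $\lambda$.

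For the final sentence (that every eigenvalue $\lambda$ of $\mat{K}$ is an eigenvalue of $\mat{J}$), the cleanest argument is constructive. If $\mat{B}\mat{A}\vec{y}=\lambda\vec{y}$ with $\vec{y}\neq\vec{0}$, set $\vec{x}=\mat{A}\vec{y}$. Then $\mat{A}\mat{B}\vec{x}=\mat{A}(\mat{B}\mat{A}\vec{y})=\lambda\mat{A}\vec{y}=\lambda\vec{x}$. For $\lambda\neq0$ one has $\vec{x}\neq\vec{0}$, since $\mat{A}\vec{y}=\vec{0}$ would force $\lambda\vec{y}=\vec{0}$; for $\lambda=0$ the claim is automatic because $\mathrm{rank}(\mat{J})\le n<m$ forces $0$ into $\mathrm{spec}(\mat{J})$.

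There is really no main obstacle here; the only point requiring mild care is the $\lambda=0$ case, where the Schur reduction is singular and one must either invoke continuity or treat it separately via the rank bound above. Apart from that, the proof is a short exercise in block-matrix determinants.
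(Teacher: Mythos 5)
The paper does not give its own proof of this lemma: it is imported verbatim as a citation to Bernstein's \emph{Matrix Mathematics} (Proposition 4.4.10), so there is no in-paper argument to compare against. Your proof is a correct rendering of the standard one. The block-matrix manipulation on $\begin{pmatrix}\lambda\mat{I}_m & \mat{A}\\ \mat{B}& \mat{I}_n\end{pmatrix}$ gives the characteristic polynomial identity $\det(\lambda\mat{I}_m-\mat{A}\mat{B})=\lambda^{m-n}\det(\lambda\mat{I}_n-\mat{B}\mat{A})$, which reads off the multiset of eigenvalues exactly as claimed, and the polynomial-continuity step to cover $\lambda=0$ is the right way to close the Schur-complement gap. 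Two cosmetic points: what you describe as clearing a block by a ``row operation'' in the first reduction is really a block \emph{column} operation (postmultiplying by $\begin{pmatrix}\mat{I}_m&0\\-\mat{B}&\mat{I}_n\end{pmatrix}$), and the complementary reduction that produces the $\mat{B}\mat{A}$ Schur complement is the one that needs $\lambda\neq 0$; the way you have labelled them is transposed, but the two determinant evaluations are right. In the constructive part, the rank bound $\operatorname{rank}\mat{J}\le n<m$ only handles the $\lambda=0$ case when $n<m$ strictly; for $n=m$ one instead notes $\det(\mat{A}\mat{B})=\det(\mat{A})\det(\mat{B})=\det(\mat{B}\mat{A})=0$. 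This is a one-line patch. Apart from these nits the argument is complete, and it is more self-contained than what the paper does (which is to cite the result outright).
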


So in \cref{eq:jacobian}, since $m=6$, the eigenvalues of $\mat{J}=\mat{A}\,\mat{B}$  are given by $n=2$  eigenvalues of  $\mat{K}=\mat{{B}}\,\mat{{A}}$, plus $m-n=4$ zero eigenvalues. 
A straightforward calculation shows that $\mat{K}$ is given by
\begin{align}
\label{eq:BA}
\mat{K} &=\begin{pmatrix}
\eta \Theta p_{,\theta} + (A_2-\eta \Psi)p_{,\varphi}            & -Q_2p_{,\varphi} \\
\eta (\Theta b_{,\theta} +c_1b_{,\Psi}+c_2b_{,\Theta})   &-\eta(d_1b_{,\Psi}+d_2b_{,\Theta})\\
\end{pmatrix}
\end{align}
evaluated on $b=p=0$. 

We shall now show that $\mat{K}$ governs the rod dynamics in the $(p,b)$-plane near $b=p=0$, the GB manifold, given by
\begin{equation}
\label{eq:jacobiany}
\vec{y}'=\mat{{K}}\,\vec{y}
\end{equation}
where $\vec{y}=(p, b)^\intercal$. 

The rate of change of $p(\theta,\varphi)$ with respect to the new time introduced in \cref{eq:general}, using \cref{eq:gatheredish}, is given by
\begin{align*}
p'&=p_{,\theta}\theta'+p_{,\varphi}\varphi'\\
%&=p_{,\theta} \eta \Theta  p+p_{,\varphi}\left(-Q_2 b +( A_2 -\eta\Psi)p\right)\\
&=p \left( \eta \Theta p_{,\theta}  + ( A_2 -\eta\Psi)p_{,\varphi}  \right) - Q_2p_{,\varphi} b
\end{align*} 
Similarly the derivative of $b(\Psi, \Theta, \theta)$ is given by
\begin{align*}
b'&=b_{,\Psi}\Psi'+b_{,\Theta}\Theta'+b_{,\theta}\theta'\\
%&=b_{,\Psi} \eta \left(-d_1 b + c_1 p\right)+b_{,\Theta} \eta \left(-d_2 b + c_2 p\right)+\eta b_{,\theta} p\Theta\\
&= p \eta(\Theta b_{,\theta} +c_1b_{,\Psi}+c_2b_{,\Theta})-  b\eta(d_1b_{,\Psi}+d_2b_{,\Theta})
\end{align*}
Hence
\begin{gather}\label{eq:p3a3}
 \begin{pmatrix}  p \\   b  \end{pmatrix}'
 =
  \begin{pmatrix}
\eta \Theta p_{,\theta} + (A_2-\eta \Psi)p_{,\varphi}            & -Q_2p_{,\varphi} \\
\eta (\Theta b_{,\theta} +c_1b_{,\Psi}+c_2b_{,\Theta})   &-\eta(d_1b_{,\Psi}+d_2b_{,\Theta})\\
\end{pmatrix} 
\begin{pmatrix}
   p \\   b
   \end{pmatrix}
\end{gather}
which agrees with \cref{eq:BA} and \cref{eq:jacobiany}. Hence $\mat{K}$ is the Jacobian of the rod dynamics in the $(p,b)$-plane near the GB manifold. 

Using expressions in \cref{sec:appA} for the coefficients $Q_i, A_i, d_i, c_i \; (i=1,2)$, we find that
\begin{align}
\label{eq:K}
\mat{K}
& = \begin{pmatrix}
K_{11} & K_{12}\\
K_{21} & K_{22} 
\end{pmatrix}\nonumber\\
& =\begin{pmatrix}
\alpha\mu\cos^2 \theta \cos^2 \varphi + \eta\Big[\Theta\big(\tan\theta-(1+\alpha)\cot\theta\big)   &  	(1+\alpha)	\alpha\mu\cos^2 \theta \cos^2 \varphi\\
-\Psi\alpha\mu\sin\theta\cos\theta\cos\varphi\Big]                                                             &    \\
\eta\Theta\cot\theta                                                                            &\eta(2\Psi\alpha\mu\sin\theta\cos\theta\cos\varphi-2\Theta\tan\theta) \\
\end{pmatrix},
\end{align}
and hence
\begin{equation}
 \label{eq:detK}
\begin{split}
\det{\mat{K}} & =\eta\alpha\cos^2\theta\cos^2\varphi \Big[\Theta\big(-2\tan\theta -(1+\alpha)\cot\theta\big)+2\Psi\alpha\mu\sin\theta\cos\theta\cos\varphi\Big]\\
&\phantom{=}\qquad +\eta^2\Big[\Theta^2\big(2(1+\alpha)-2\tan^2\theta\big)+\Theta\Psi\alpha\mu\sin\theta\cos\theta\cos\varphi\big(4\tan\theta-2(1+\alpha)\cot\theta\big) \\
&\phantom{=}\qquad\qquad\quad-2\Psi^2(\alpha\mu\sin\theta\cos\theta\cos\varphi)^2\Big].
\end{split}
\end{equation}
We note the following group symmetry:
\begin{align}
\label{eq:group}
\mat{K}|_{\varphi=\bar{\varphi},\Psi=\bar{\Psi}}&\equiv \mat{K}|_{\varphi=-\pi-\bar{\varphi},\Psi=-\bar{\Psi}}    
\end{align}
and the absence of any symmetry in $\Theta$ when $\theta\in [0,\pi/2]$ .

Locally to the GB manifold, $\vec{y}=(p,b)^\intercal=\vec{0}$, the dynamics of the rod are approximately governed by \cref{eq:jacobiany} and hence  described by the eigenvalues $\lambda_\pm$ and eigenvectors $\vec{e}_\pm$ of $\mat{K}$.
\iffalse
, given by
\begin{align}
\label{eq:evalK}
\lambda_{\pm} & = \frac{1}{2}\left ( (K_{11}+K_{22}) \pm \sqrt{(K_{11}-K_{22})^2+4K_{12}K_{21}} \right )
\end{align}
and
\begin{align}
\label{eq:evecK}
\vec{e}_{\pm} & =
\begin{bmatrix}
\frac{2K_{12}}{\pm(K_{22}-K_{11})+\sqrt{(K_{11}-K_{22})^2+4K_{12}K_{21}}}\\
1 \\
\end{bmatrix}.
\end{align}
\fi

Let us briefly recap the properties of $\mat{K}$ for the planar ($2D$) problem, where the GB manifold reduces to the four points $P^{\pm}, Q^{\pm}$, see \cref{fig:p3s} and \cite[fig.2]{Genot1999}. From \cref{eq:K} with $\varphi=-\frac{\pi}{2}$ we have
\begin{align}
\label{eq:K2d}
\mat{K}_{\varphi=-\frac{\pi}{2}} \equiv
\mat{K}_{2D}
& =\eta\Theta\begin{pmatrix}
\tan\theta-(1+\alpha)\cot\theta   &  	0\\
\cot\theta                          & -2\tan\theta\\
\end{pmatrix},
\end{align}
evaluated at $\theta=\theta_{1,2}$ from \cref{eq:theta12}. The eigenvalues $\lambda_{1,2}=\lambda_{\pm} |_{\varphi=-\frac{\pi}{2}}$ and associated eigenvectors $\vec{e}_{1,2}=\vec{e}_{\pm} | _{\varphi=-\frac{\pi}{2}}$ of $\mat{K}_{2D}$ are
\begin{align}
\label{eq:evalK2D}
\left \{ \lambda_1, \lambda_2 \right \} & = \left \{ \eta\Theta(\tan \theta - (1+\alpha) \cot \theta), -2\eta\Theta\tan \theta \right \}
\end{align}
and
\begin{align}
\label{eq:evecK2D}
\left \{ \vec{e}_1, \vec{e}_2 \right \} & = \left \{ \begin{pmatrix} 3\tan^2 \theta-(1+\alpha)\\ 1 \\ \end{pmatrix}, \begin{pmatrix} 0 \\ 1 \\ \end{pmatrix} \right \}.
\end{align}
It can be shown that $\lambda_1 = \eta\Theta(\tan \theta - (1+\alpha) \cot \theta) =0$ when $\mu = \mu_\mathrm{P}$ and that $e_{1,1} = 3\tan^2 \theta-(1+\alpha) = 0$ when $\mu = \mu_\mathrm{C}$. In summary, for the $2D$ problem, with respect to the new time introduced in  \cref{eq:general},
\begin{itemize}
\item For $\mu \in [0,\mu_\mathrm{P})$, there is no paradox.
\item For $\mu \in (\mu_\mathrm{P},\mu_\mathrm{C})$:
\begin{itemize}
\item $P^+$ is a stable node and $P^-$ is an unstable node; $\vec{e}_1$ lies in the first and third quadrants of the $(p,b)$ plane and $\vec{e}_2$ lies on the $b$-axis.
\item $Q^{\pm}$ are both saddles.
\end{itemize}
\item For $\mu > \mu_\mathrm{C}$:
\begin{itemize}
\item $P^+$ is a stable node and $P^-$ is an unstable node; $\vec{e}_1$ in the second and fourth quadrants  of the $(p,b)$ plane and $\vec{e}_2$ lies on the $b$-axis.
\item $Q^{\pm}$ are both saddles.
\end{itemize}
\end{itemize}
Mathematically we see that $\mu=\mu_\mathrm{C}$ corresponds to a change in the orientation of $\vec{e}_1$ relative to $\vec{e}_2$.

The $2D$ problem is the $\varphi=-\frac{\pi}{2}$ section of the $3D$ problem with $\Psi=0$ (\cref{fig:p3s}), so as we move around the GB manifold, as in \cref{fig:GBs_bif3}, nodes at $P^{\pm}$ must undergo a bifurcation to become saddles at $Q^{\pm}$, for any value of $\mu>\mu_\mathrm{P}$.

To fully understand the dynamics of the $3D$ problem, we will need a combination of analysis and numerical methods. We show one numerical example in \cref{fig:planesmu1p4_w_traj}, where we set $\eta=1$, $\Psi=0$, $\alpha=3$ and $\mu=1.4$. Each figure shows the eigenvectors\footnote{The eigenvectors are shown in the new time introduced in  \cref{eq:general}. Orbits in the first, second and third quadrants of \cref{fig:planesmu1p4_w_traj,fig:planesmu1p7_w_traj,fig:ppsThetapos,fig:ppsThetaneg} are artefacts.} in the $(p,b)$-plane at selected points around the GB manifold with $\Theta>0$ (shown in \cref{fig:p3.2}). The shading corresponds to that of \cref{fig:pandb}. \cref{fig:Plane1small} shows a stable node (the point $P^+$ in \cref{fig:pandb}). The strong direction (in blue) is aligned with the $b$-axis and the weak direction (in red) lies in the first and third quadrants. \cref{fig:Plane5small} shows a saddle (the point $Q^+$ in \cref{fig:pandb}). 
%In both these figures, the behaviour is the same in the $2D$ problem for $\mu \in (\mu_\mathrm{P},\mu_\mathrm{C}) = (\frac{4}{3}, \frac{8}{3\sqrt{3}}) \approx (1.3333, 1.5396)$ when $\alpha=3$. 

As we move through \cref{fig:planesmu1p4_w_traj}, we move around the GB manifold from $\theta=\theta_1$ to $\theta=\theta_2$. At a certain point,
shown in \cref{fig:Plane3small}, the eigenvalue $\lambda_+$ passes through zero, as the stable node becomes a saddle. At this point, the system is non-hyperbolic.

In each of \cref{fig:Plane2small,fig:Plane3small,fig:Plane4small}, the orbit that approaches the GB manifold along the (strong) stable manifold at that point acts as a separatrix\footnote{We use the term separatrix to mean an orbit, or set thereof, which locally separates orbits that result in different behaviours. 
%Whilst not separatrices in the formal sense in \cref{eq:jacobiany}, they separate qualitatively distinct orbits in the fourth quadrant of \cref{eq:polareqs_varphi}.
} between orbits that reach the inconsistent region and those that lift off. 

We can now see how the $2D$ picture (\cref{fig:pandb}) fits into these results. Moving around the GB manifold from \cref{fig:Plane1small}, the eigenvector frame distorts away from the $b$-axis, allowing movement from the slipping region (in yellow) to the inconsistent region (in lime). After the bifurcation (\cref{fig:Plane3small}), the eigenvector frame distorts back towards the $b$-axis, eventually giving \cref{fig:Plane5small}, the point  $Q^+$ in \cref{fig:pandb}, where crossing $p=0$ is no longer possible.

\begin{figure}[htbp]
    \centering
    \setlength{\templength}{0.19\textwidth}
        \begin{subfigure}[t]{\templength}
        \begin{overpic}[ width=\textwidth]{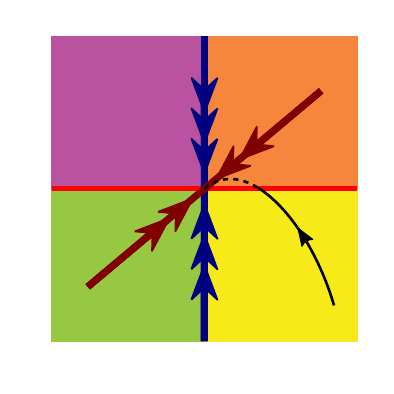}
        \put(50,0){$p$}
        \put(25,0){$-$}
        \put(75,0){$+$}
        \put(0,50){$b$}
        \put(0,25){$-$}
        \put(0,75){$+$}
        \end{overpic}
        \caption{}
        \label{fig:Plane1small}
    \end{subfigure}
    \hfil
    \begin{subfigure}[t]{\templength}
        \begin{overpic}[ width=\textwidth]{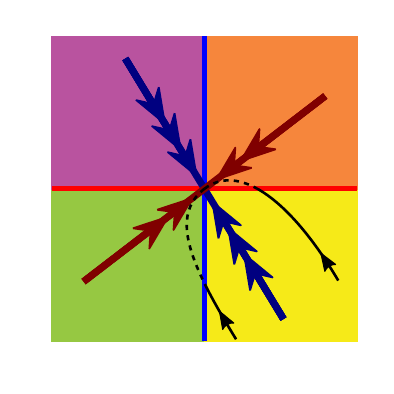}
        \put(50,0){$p$}
        \put(25,0){$-$}
        \put(75,0){$+$}
        \put(0,50){$b$}
        \put(0,25){$-$}
        \put(0,75){$+$}
        \end{overpic}
        \caption{}
        \label{fig:Plane2small}
    \end{subfigure}
    \hfil
    \begin{subfigure}[t]{\templength}
        \begin{overpic}[ width=\textwidth]{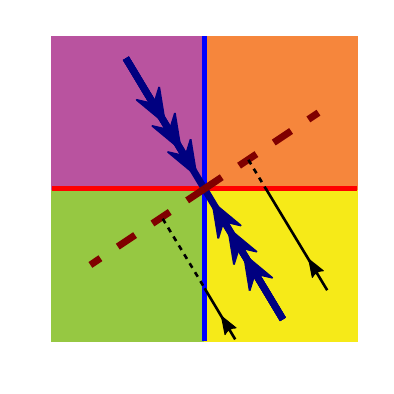}
        \put(50,0){$p$}
        \put(25,0){$-$}
        \put(75,0){$+$}
        \put(0,50){$b$}
        \put(0,25){$-$}
        \put(0,75){$+$}
        \end{overpic}
        \caption{}
        \label{fig:Plane3small}
    \end{subfigure}
    \begin{subfigure}[t]{\templength}
        \begin{overpic}[ width=\textwidth]{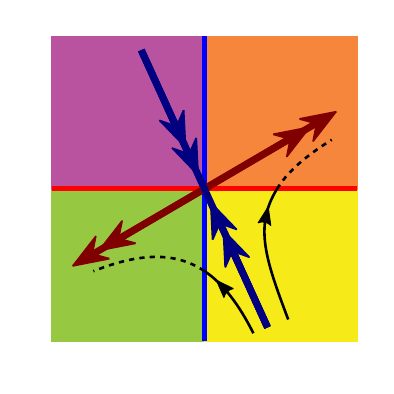}
        \put(50,0){$p$}
        \put(25,0){$-$}
        \put(75,0){$+$}
        \put(0,50){$b$}
        \put(0,25){$-$}
        \put(0,75){$+$}
        \end{overpic}
        \caption{}
        \label{fig:Plane4small}
    \end{subfigure}
    \hfil
        \begin{subfigure}[t]{\templength}
        \begin{overpic}[ width=\textwidth]{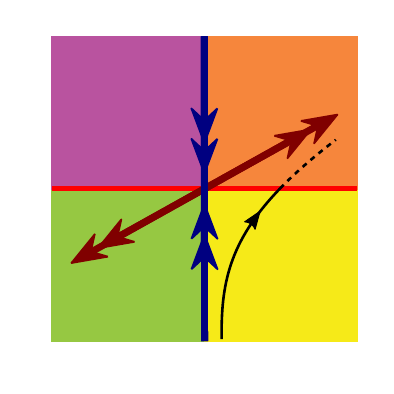}
        \put(50,0){$p$}
        \put(25,0){$-$}
        \put(75,0){$+$}
        \put(0,50){$b$}
        \put(0,25){$-$}
        \put(0,75){$+$}
        \end{overpic}
        \caption{}
        \label{fig:Plane5small}
    \end{subfigure}
    \caption{Dynamics in the $(p,b)$-plane for the $3D$ Painlev\'e paradox for $\eta=1$, $\Psi=0$, $\alpha=3$ and $\mu=1.4$ close to the GB manifold $b=p=0$ when $\Psi=0$. The shading of each region corresponds to that of \cref{fig:pandb}: slipping (yellow), lift-off (orange), indeterminate (purple), inconsistent (lime). Each subfigure shows eigenvectors at selected points around the GB manifold (see \cref{fig:p3.2}), with coordinates $(\theta, \varphi, \Psi, \Theta)=(\theta_\mathrm{GB}, \varphi_\mathrm{GB}, 0, \Theta_\mathrm{GB})$, where $\Theta_\mathrm{GB}=\sqrt{\csc \theta_\mathrm{GB}}$ from \cref{eq:b}: $(\theta_\mathrm{GB}, \varphi_\mathrm{GB})$ are given by  {\bf (a)} $(\theta_1,-\frac{\pi}{2})=(0.9702,-1.5708)$ corresponding to $P^+$ in \cref{fig:pandb}, {\bf (b)} $(1.0094,-1.7832)$, {\bf (c)} $(1.0953,-1.8793)$, {\bf (d)} $(1.1872,-1.7978)$ and {\bf (e)} $(\theta_2,-\frac{\pi}{2})=(1.2209,-1.5708)$, corresponding to $Q^+$ in \cref{fig:pandb}. The eigenvectors are shown in the new time introduced in  \cref{eq:general} which reverses $t$ in $p<0$. They are not drawn to scale.}
    \label{fig:planesmu1p4_w_traj} 
\end{figure}

We show another numerical example in \cref{fig:planesmu1p7_w_traj}, where $\eta=1$, $\Psi=0$, $\alpha=3$ and now $\mu=1.7 > \mu_\mathrm{C}= \frac{8}{3\sqrt{3}} \approx 1.5396$. \cref{fig:Plane1small1p7,fig:Plane5small1p7} correspond to the $2D$ problem. In \cref{fig:Plane1small1p7}, the strong direction (in blue) now lies in the first and third quadrants, with the weak direction (in red) aligned with the $b$-axis.  Moving around the GB manifold from \cref{fig:Plane1small1p7}, the eigenvector frame distorts, with the weak (red) eigenvector moving off the $b$-axis, into the first and third quadrants, allowing movement from the slipping region (in yellow) to the inconsistent region (in lime). After the bifurcation (\cref{fig:Plane3small1p7}), the eigenvector frame distorts even more, with the strong (blue) eigenvector eventually aligning itself with the $b$-axis, where crossing the line $p=0$ is no longer possible. The weak (red) eigenvector remains in the first and third quadrants. In each of \cref{fig:Plane2small1p7,fig:Plane3small1p7,fig:Plane4small1p7}, the orbits that approach along the (strong) stable manifold of points along the GB manifold act as a separatrix between orbits that reach the inconsistent region and those that lift off.

Note that \cref{fig:Plane1small,fig:Plane1small1p7} are the one qualitatively different pair between \cref{fig:planesmu1p4_w_traj,fig:planesmu1p7_w_traj}, which suggests that the critical value $\mu_\mathrm{C}$ maybe be less significant in the $3D$ problem.

\begin{figure}[htbp]
    \centering
    \setlength{\templength}{0.19\textwidth}
    \begin{subfigure}[t]{\templength}
        \begin{overpic}[ width=\textwidth]{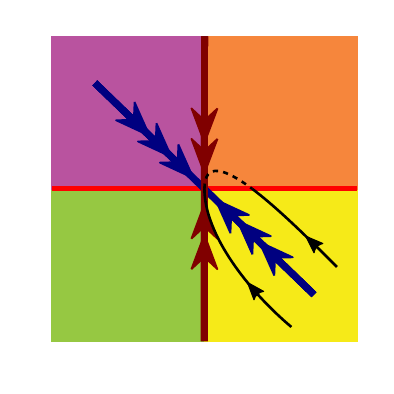}
        \put(50,0){$p$}
        \put(25,0){$-$}
        \put(75,0){$+$}
                \put(0,50){$b$}
        \put(0,25){$-$}
        \put(0,75){$+$}
        \end{overpic}
        \caption{}
        \label{fig:Plane1small1p7}
    \end{subfigure}
    \hfil
    \begin{subfigure}[t]{\templength}
        \begin{overpic}[ width=\textwidth]{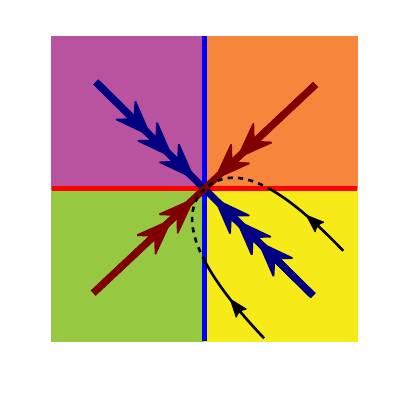}
        \put(50,0){$p$}
        \put(25,0){$-$}
        \put(75,0){$+$}
                \put(0,50){$b$}
        \put(0,25){$-$}
        \put(0,75){$+$}
        \end{overpic}
        \caption{}
        \label{fig:Plane2small1p7}
    \end{subfigure}
    \hfil
    \begin{subfigure}[t]{\templength}
        \begin{overpic}[ width=\textwidth]{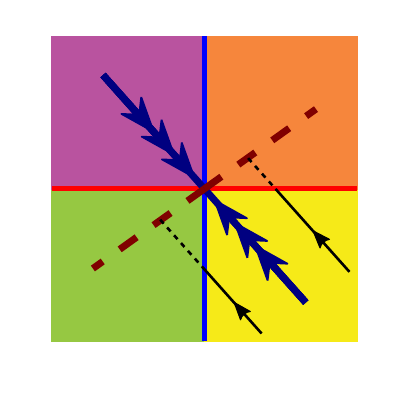}
        \put(50,0){$p$}
        \put(25,0){$-$}
        \put(75,0){$+$}
                \put(0,50){$b$}
        \put(0,25){$-$}
        \put(0,75){$+$}
        \end{overpic}
        \caption{}
        \label{fig:Plane3small1p7}
    \end{subfigure}
    \begin{subfigure}[t]{\templength}
        \begin{overpic}[ width=\textwidth]{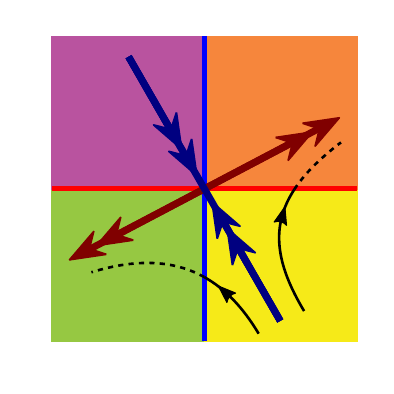}
        \put(50,0){$p$}
        \put(25,0){$-$}
        \put(75,0){$+$}
                \put(0,50){$b$}
        \put(0,25){$-$}
        \put(0,75){$+$}
        \end{overpic}
        \caption{}
        \label{fig:Plane4small1p7}
    \end{subfigure}
    \hfil
        \begin{subfigure}[t]{\templength}
        \begin{overpic}[ width=\textwidth]{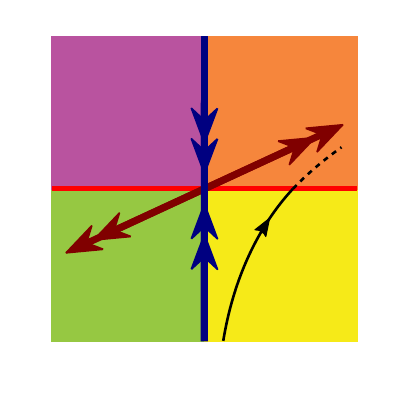}
        \put(50,0){$p$}
        \put(25,0){$-$}
        \put(75,0){$+$}
                \put(0,50){$b$}
        \put(0,25){$-$}
        \put(0,75){$+$}
        \end{overpic}
        \caption{}
        \label{fig:Plane5small1p7}
    \end{subfigure}
    \caption{As \cref{fig:planesmu1p4_w_traj}, for $\mu=1.7$. Here $(\theta_\mathrm{GB}, \varphi_\mathrm{GB})$ are given by  {(a)} $(\theta_1,-\frac{\pi}{2})=(0.7692, -1.5708)$, {(b)} $(0.8517,-1.9969)$, {(c)} $(1.0227,-2.2143)$, {(d)} $(1.2562,2.1107)$ and {(e)} $(\theta_2,-\frac{\pi}{2})=(1.3333,-1.5708)$.} 
    \label{fig:planesmu1p7_w_traj} 
\end{figure}
\cref{fig:Plane3small,fig:Plane3small1p7} show bifurcations on the GB manifold. In the following section we embark on a general study of such bifurcations. However, we shall first give some general statements about the generic dynamics local to the GB manifold.  

{\begin{theorem}\label{thm:GB}
Local to the GB manifold but for $\varphi\neq-\frac{\pi}{2}$:
\begin{enumerate}
    \item{ For $\Theta>0$, there is a separatrix between orbits that reach the inconsistent paradox and those that lift off, and there are three generic phase portraits in $(p,b)$ space (shown in \cref{fig:ppsThetaneg}).    }
    \item{ For $\Theta<0$, there are 5 generic phase portraits in $(p,b)$ space (shown in \cref{fig:ppsThetaneg}), and depending on the local phase portrait either the rod remains slipping or reaches inconsistency locally. Lift-off is not possible and the GB manifold is unstable.}
\end{enumerate}
\end{theorem}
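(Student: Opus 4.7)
The plan is to analyse the linearised reduction $\vec{y}'=\mat{K}\vec{y}$ on the $(p,b)$-plane of \cref{eq:jacobiany,eq:K}, and to track orbits starting in the slipping quadrant $\{p>0,\,b<0\}$ as they interact with its boundary. From \cref{eq:K} one reads off immediately $K_{12}=(1+\alpha)\alpha\mu\cos^2\theta\cos^2\varphi>0$ (using $\varphi\neq-\pi/2$ and $\theta\in(0,\pi/2)$) and $K_{21}=\eta\Theta\cot\theta$, whose sign coincides with that of $\Theta$. Evaluating the linear flow on the axes gives $p'|_{p=0}=K_{12}b$ and $b'|_{b=0}=K_{21}p$, so on $\{p=0,\,b<0\}$ the flow always points into $\{p<0\}$ (inconsistency), whereas the behaviour on $\{b=0,\,p>0\}$ is controlled by the sign of $\Theta$. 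The discriminant of $\mat{K}$, $\Delta=(K_{11}-K_{22})^2+4K_{12}K_{21}$, therefore also has its sign tied to that of $\Theta$ through its second summand.

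For part~(1), $\Theta>0$ forces $K_{12}K_{21}>0$, hence $\Delta>0$, so the eigenvalues are real and distinct. Away from the codimension-one locus $\det\mat{K}=0$ there are therefore exactly three generic portraits: stable node, saddle, unstable node. Because $b'|_{b=0,\,p>0}>0$ here, orbits can exit through lift-off as well, and the separatrix between the two exits is the one-dimensional dominant invariant manifold through the origin—the stable manifold in the saddle case, and the strong stable (resp.\ unstable) manifold in the stable (resp.\ unstable) node case. Identifying which orbits in $\{p>0,\,b<0\}$ lie on each side of this manifold is a routine linear-algebra exercise using the eigenvectors of $\mat{K}$ and the sign analysis above.

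For part~(2), $\Theta<0$ gives $K_{12}K_{21}<0$, permitting $\Delta<0$ and hence complex eigenvalues. Classifying by the signs of $(\det\mat{K},\,\tr\mat{K},\,\Delta)$ and discarding the codimension-one bifurcation loci, there are exactly five generic portraits: stable node, unstable node, saddle, stable focus, unstable focus. The decisive observation is $b'|_{b=0,\,p>0}=K_{21}p<0$, so orbits cannot cross $\{b=0\}$ from below in the slipping region; lift-off is forbidden locally, and the only possibilities are to remain in $\{p>0,\,b<0\}$ or to cross $\{p=0\}$ into inconsistency. Instability of the GB manifold follows from
\begin{equation*}
\tr\mat{K}=\alpha\mu\cos^2\theta\cos^2\varphi-\eta\Theta\bigl[\tan\theta+(1+\alpha)\cot\theta\bigr]+\eta\Psi\alpha\mu\sin\theta\cos\theta\cos\varphi,
\end{equation*}
whose first two summands are strictly positive when $\Theta<0$, so $\tr\mat{K}>0$ off a $\Psi$-dependent exceptional set, giving at least one eigenvalue with positive real part.

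The main obstacle is the focus subcase of part~(2): a spiralling orbit would in principle visit all four quadrants, yet the theorem asserts only slipping or inconsistency locally. The resolution is again the sign of $b'$ on $\{b=0,\,p>0\}$: a trajectory starting in $\{p>0,\,b<0\}$ cannot cross $\{b=0\}$ upwards, so its first exit from the slipping quadrant must be through $\{p=0\}$ into inconsistency—unless it is trapped in $\{p>0,\,b<0\}$ with $\omega$-limit on the GB manifold in the stable-focus/node subcase. Combined with the case enumeration above, this yields the claimed classifications and both dynamical conclusions.
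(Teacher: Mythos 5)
Your overall approach matches the paper's: both work in the reduced $(p,b)$-plane with the Jacobian $\mat{K}$, and use the signs of the off-diagonal entries $K_{12},K_{21}$ to locate eigenvectors relative to the slipping quadrant. Your axis-sign computation $p'|_{p=0}=K_{12}b$, $b'|_{b=0}=K_{21}p$ is a clean equivalent of the paper's eigenvector-product identity $\bigl(K_{12}/(\lambda_+-K_{11})\bigr)\bigl(K_{12}/(\lambda_--K_{11})\bigr)=-K_{12}/K_{21}$, and it does deliver ``lift-off is forbidden'' for $\Theta<0$ directly. But part (2) contains a genuine gap that undermines the theorem's conclusion.

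The crux of part (2) is ruling out \emph{stable} nodes and \emph{stable} foci; without that, orbits in the slipping quadrant could converge to the GB manifold, contradicting both ``the GB manifold is unstable'' and the dichotomy ``remains slipping or reaches inconsistency''. Your enumeration of the five phase portraits as ``stable node, unstable node, saddle, stable focus, unstable focus'' is simply the list of generic planar linear types, unconstrained by the problem, and it includes exactly the two types the theorem needs to exclude. Your instability argument (the first two summands of $\tr\mat{K}$ are positive for $\Theta<0$, so $\tr\mat{K}>0$ ``off a $\Psi$-dependent exceptional set'') leaves the door open: for $\Psi\cos\varphi$ sufficiently negative the third summand $\eta\Psi\alpha\mu\sin\theta\cos\theta\cos\varphi$ can dominate, and you have not shown $\det\mat{K}\le0$ there. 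The paper closes this by eliminating $\Psi$ to obtain the constraint $\det\mat{K}=-2\,\tr(\mat{K})^2+C_1\,\tr(\mat{K})+C_0$ with $C_1>0$ and $C_0<0$ when $\Theta<0$, so (Descartes' rule of signs, or directly: all three summands are negative when $\tr\mat{K}<0$) $\det\mat{K}>0$ forces $\tr\mat{K}>0$. That is the missing step, and with it the five generic portraits are refinements of \{saddle, unstable node, unstable focus\} distinguished by eigenvector quadrant positions --- not the five textbook linear types. Your closing sentence, which hedges with ``unless it is trapped $\ldots$ in the stable-focus/node subcase,'' confirms the gap: that subcase must be shown not to occur.

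A smaller slip in part (1): for the unstable node it is the \emph{weakly} unstable eigenvector (that is, $\vec{e}_-$, since $\lambda_-<\lambda_+$ and $\lambda_--K_{11}<0$ while $K_{12}>0$) that lies in the fourth quadrant and acts as the separatrix, not the strongly unstable one. Your phrasing ``strong stable (resp.\ unstable) manifold in the stable (resp.\ unstable) node case'' gets the stable case right but the unstable case wrong; the invariant is that $\vec{e}_-$ is always the quadrant-$2/4$ eigenvector when $\Theta>0$, and whether $\lambda_-$ is the strong or weak rate flips between the stable and unstable node cases.
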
}
\begin{figure}[htbp]
    \setlength{\templength}{0.19\textwidth}
    \centering
    \begin{subfigure}[t]{\templength}
        \begin{overpic}[ width=\textwidth]{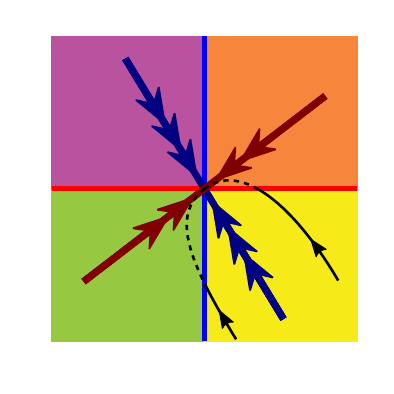}
        \put(50,0){$p$}
        \put(25,0){$-$}
        \put(75,0){$+$}
        \put(0,50){$b$}
        \put(0,25){$-$}
        \put(0,75){$+$}
        \end{overpic}
        \caption{}
        \label{fig:pp1}
    \end{subfigure}
    \hfil
        \begin{subfigure}[t]{\templength}
        \begin{overpic}[ width=\textwidth]{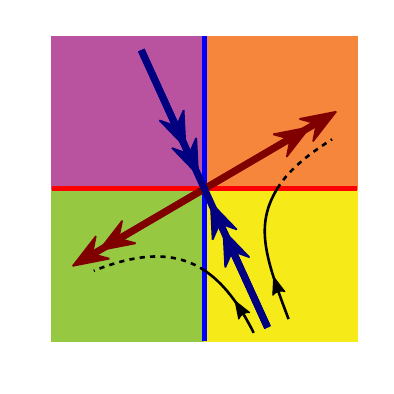}
        \put(50,0){$p$}
        \put(25,0){$-$}
        \put(75,0){$+$}
        \put(0,50){$b$}
        \put(0,25){$-$}
        \put(0,75){$+$}
        \end{overpic}
        \caption{}
        \label{fig:pp2}
    \end{subfigure}
    \hfil
        \begin{subfigure}[t]{\templength}
        \begin{overpic}[ width=\textwidth]{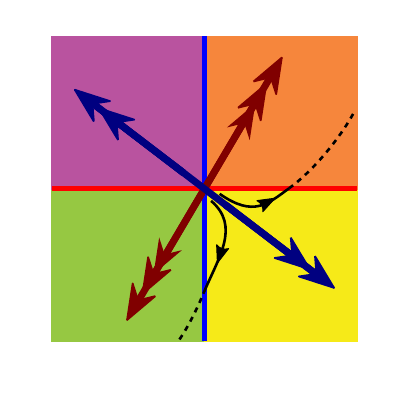}
        \put(50,0){$p$}
        \put(25,0){$-$}
        \put(75,0){$+$}
        \put(0,50){$b$}
        \put(0,25){$-$}
        \put(0,75){$+$}
        \end{overpic}
        \caption{}
        \label{fig:pp8}
    \end{subfigure}
    \caption{Possible dynamics in $(p,b)$ space local to the GB manifold for $\Theta>0$. In (a), the point on the GB manifold is a stable node where the strong eigenvector is in the fourth quadrant and the corresponding strong stable manifold acts as a separatrix between orbits that reach the inconsistent region and those that lift off. In (b), the point on the GB manifold is a saddle where stable eigenvector is in the fourth quadrant and the corresponding stable manifold acts as a separatrix between orbits that reach the inconsistent region and those that lift off. In (c), the point on the GB manifold is an unstable node where the weakly unstable eigenvector is in the fourth quadrant and the corresponding weakly unstable manifold acts as a separatrix between orbits that reach the inconsistent region and those that lift off.  }
    \label{fig:ppsThetapos} 
\end{figure}
\begin{figure}[htbp]
    \setlength{\templength}{0.19\textwidth}
    \centering
    \begin{subfigure}[t]{\templength}
        \begin{overpic}[ width=\textwidth]{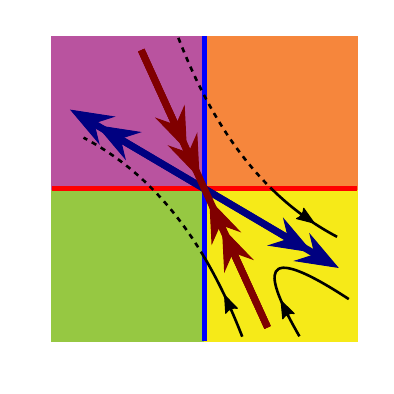}
        \put(50,0){$p$}
        \put(25,0){$-$}
        \put(75,0){$+$}
        \put(0,50){$b$}
        \put(0,25){$-$}
        \put(0,75){$+$}
        \end{overpic}
        \caption{}
        \label{fig:pp3}
    \end{subfigure}
    \hfil
        \begin{subfigure}[t]{\templength}
        \begin{overpic}[ width=\textwidth]{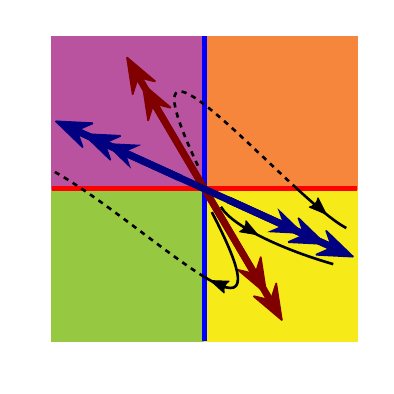}
        \put(50,0){$p$}
        \put(25,0){$-$}
        \put(75,0){$+$}
        \put(0,50){$b$}
        \put(0,25){$-$}
        \put(0,75){$+$}
        \end{overpic}
        \caption{}
        \label{fig:pp6}
    \end{subfigure}
    \hfil
        \begin{subfigure}[t]{\templength}
        \begin{overpic}[ width=\textwidth]{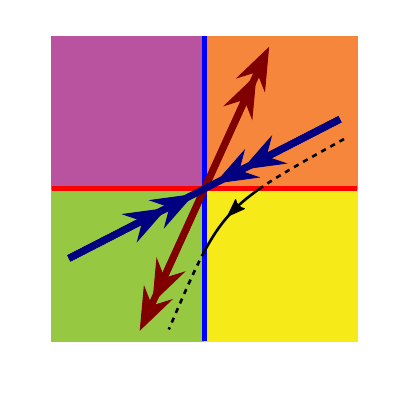}
        \put(50,0){$p$}
        \put(25,0){$-$}
        \put(75,0){$+$}
        \put(0,50){$b$}
        \put(0,25){$-$}
        \put(0,75){$+$}
        \end{overpic}
        \caption{}
        \label{fig:pp5}
    \end{subfigure}
    \hfil
        \begin{subfigure}[t]{\templength}
        \begin{overpic}[ width=\textwidth]{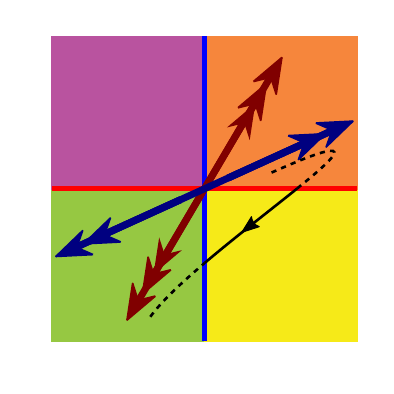}
        \put(50,0){$p$}
        \put(25,0){$-$}
        \put(75,0){$+$}
        \put(0,50){$b$}
        \put(0,25){$-$}
        \put(0,75){$+$}
        \end{overpic}
        \caption{}
        \label{fig:pp4}
    \end{subfigure}
    \hfil
        \begin{subfigure}[t]{\templength}
        \begin{overpic}[ width=\textwidth]{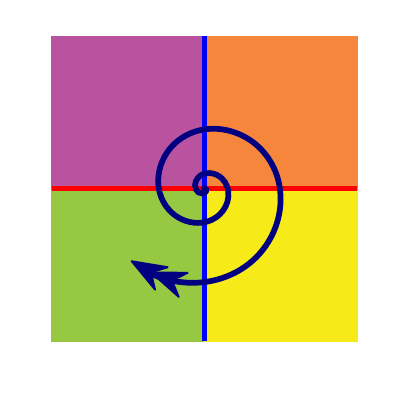}
        \put(50,0){$p$}
        \put(25,0){$-$}
        \put(75,0){$+$}
        \put(0,50){$b$}
        \put(0,25){$-$}
        \put(0,75){$+$}
        \end{overpic}
        \caption{}
        \label{fig:pp7}
    \end{subfigure}
    \caption{Possible dynamics in $(p,b)$ space local to the GB manifold for $\Theta<0$. In (a) and (b), orbits either remain slipping, following the (strongly) unstable manifold of the GB manifold in the fourth quadrant, or reach the inconsistent region, tending to the (strongly) unstable manifold of the GB manifold in the second quadrant. In (c)-(e) all local orbits tend to reach inconsistency. }
    \label{fig:ppsThetaneg} 
\end{figure}
\begin{proof}Consider the Jacobian $\mat{K}$ from \eqref{eq:K}, with eigenvectors $\lambda_\pm$ and eigenvectors $\vec{e}_\pm$ given by
\begin{equation}
    \vec{e}_\pm:=\left(\frac{K_{12}}{\lambda_\pm-K_{11}},1\right)^\intercal
    %\equiv\left(\frac{\lambda_\pm-K_{22}}{K_{21}},1\right)^\intercal
    .
\end{equation}
The product of the first component of the eigenvectors is
\begin{align}
        \left(\frac{K_{12}}{\lambda_+-K_{11}}\right)\left(\frac{K_{12}}{\lambda_--K_{11}}\right)&=\frac{K_{12}^2}{\lambda_+\lambda_- - K_{11}\left(\lambda_1+\lambda_2\right)+K_{11}^2}\nonumber\\
        &=\frac{K_{12}^2}{\left(K_{11}K_{22}-K_{12}K_{21}\right) - K_{11}\left(K_{11}+K_{22}\right)+K_{11}^2}\nonumber\\
        &\equiv-\frac{K_{12}}{K_{21}}.\label{eq:offdiagonals}
    \end{align}
\begin{enumerate}
    \item{For $\Theta>0$, the product \cref{eq:offdiagonals} is negative, from \cref{eq:K} and hence the eigenvectors are in different quadrants. Furthermore, from the signs of the off-diagonal elements of $\mat{K}$, we can infer that a point on the GB manifold is either a stable node with the strong eigenvector in the fourth quadrant (see \cref{fig:pp1}), a saddle with the stable eigenvector in the fourth quadrant (see \cref{fig:pp2}), or an unstable node with the weakly unstable eigenvector in the fourth quadrant (see \cref{fig:pp8}). Each of the cases in \cref{fig:pp1,fig:pp2,fig:pp8} can be realised.}
    \item{For $\Theta<0$, the product \cref{eq:offdiagonals} is positive, and hence the eigenvectors (if real) are in the same quadrant. Furthermore, eliminating $\Psi$ from $\mat{K}$, we find
    \begin{align}
    K_{11}&=+2\alpha\mu\cos^2\theta\cos^2\varphi-2(1+\alpha)\eta\Theta\cot\theta-\tr(\mat{K}),\\
    K_{22}&=-2\alpha\mu\cos^2\theta\cos^2\varphi+2(1+\alpha)\eta\Theta\cot\theta +2\tr(\mat{K}).
\end{align} 
where $\tr(\mat{K})$ is the trace of $\mat{K}$. Hence, the determinant of $\mat{K}$ \cref{eq:detK} can be written as the following polynomial in $\tr(\mat{K})$:
\begin{align}
    \det(\mat{K})&=K_{11}K_{22}-K_{12}K_{21}\nonumber\\
    &=C_2\tr(\mat{K})^2+C_1\tr(\mat{K})+C_0\label{eq:dettra}
    \end{align}
    where
    \begin{align}
    C_2&:=-2,\\
    C_1&:=6\left(\alpha\mu\cos^2\theta\cos^2\varphi-(1+\alpha)\eta\Theta\cot\theta\right),\\
    C_0&:=\left(-4\eta^2\Theta^2(1+\alpha)^2\cot^2\theta +7\eta\Theta (1+\alpha) \alpha\mu\cot\theta\cos^2\theta-4\alpha^2\mu^2\cos^4\theta\cos^4\varphi\right).
\end{align}
We can show that there are no negative roots of this polynomial in $\tr(\mat{K})$ when $\Theta<0$, using Descartes' rule of signs and so stable nodes and foci are not possible. From the signs of the off-diagonal entries of $\mat{K}$, we can deduce that only the phase portraits in \cref{fig:ppsThetaneg} are possible. Hence lift-off is not possible locally. Each case in \cref{fig:pp3,fig:pp4,fig:pp5,fig:pp6,fig:pp7} can be realised.}
\end{enumerate}
\end{proof}
\begin{corollary}\label{cor:main}
For $\mu>\mu_\mathrm{P}$, there exists an open set of initial conditions that reach the inconsistent region from slipping in finite time. 
\end{corollary}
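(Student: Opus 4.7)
The plan is to exhibit, for $\mu>\mu_\mathrm{P}$, a single phase-space point at which the slipping flow crosses the boundary $\{p=0\}$ of the inconsistent region transversely, and then to use continuous dependence on initial conditions to inflate this transverse crossing into an open set of initial data. Only the planar section $\{\varphi=-\pi/2\}$ is excluded by \cref{eq:pdotarg}, so generic points on the boundary of the Painlev\'e region will suffice.

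To locate the crossing point, I would use the fact that for $\mu>\mu_\mathrm{P}$ the region $\{p<0\}$ in the $(\theta,\varphi)$-plane is non-empty (see \cref{fig:badregion}), with smooth boundary arcs $\varphi=\varphi_\pm(\theta)$ for $\theta\in(\theta_1,\theta_2)$. Fix $\theta^\star\in(\theta_1,\theta_2)$ strictly in the interior and set $\varphi^\star=\varphi_-(\theta^\star)$, which lies strictly between $-\pi$ and $-\pi/2$, so that $\varphi^\star\neq -\pi/2$ and $\cos\varphi^\star\neq 0$. Complete to a phase-space point $\mathbf{x}^\star$ by taking $\Psi^\star=\Theta^\star=0$ (so that $b(\mathbf{x}^\star)=-1<0$ by \cref{eq:b}), $z=w=0$ (the slipping constraint) and any $\eta^\star>0$. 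Applying \cref{eq:pdotarg} at $\mathbf{x}^\star$ yields
\begin{equation*}
    \left.\dot{p}\right|_{\mathbf{x}^\star}=\frac{(1+\alpha)\alpha\mu}{\eta^\star}\cos^2\theta^\star\cos^2\varphi^\star\,b(\mathbf{x}^\star)<0,
\end{equation*}
so the slipping vector field is transverse to $\{p=0\}$ at $\mathbf{x}^\star$ and oriented into $\{p<0\}$.

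Finally, by continuity in initial data, every initial condition in a small one-sided neighbourhood $U$ of $\mathbf{x}^\star$ with $p>0$ small and $b<0$ yields a slipping trajectory that reaches $\{p=0\}$ in uniformly small time, with $b$ remaining strictly negative throughout the transit. Hence $U$ is an open set of slipping initial conditions whose forward orbits enter the inconsistent region in finite time. The main subtlety, which I expect to be the principal obstacle, is the singular blow-up of $F_z=-b/p$ as $p\to 0$, which prevents a naive application of standard ODE continuity theorems; this is resolved by passing to the rescaled time $s$ defined by $\eta p\,ds=dt$ in \cref{eq:general,eq:f}, in which the vector field is smooth and transverse to $\{p=0\}$ at $\mathbf{x}^\star$. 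The finite $s$-time crossing then corresponds to a finite original-time crossing, because $p$ vanishes linearly in $s$ near the crossing and so $\int_{s_0}^{s_\star}\eta p\,ds$ converges.
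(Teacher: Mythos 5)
Your proposal is correct, and it takes a genuinely different route from the paper. The paper simply invokes \cref{thm:GB} --- the detailed classification of phase portraits of the blown-up dynamics \eqref{eq:jacobiany} near the GB manifold $\{b=p=0\}$ --- to read off the existence of an open set of slipping initial conditions whose orbits enter $\{p<0,b<0\}$. Your argument instead stays entirely \emph{away} from the GB manifold: you pick a boundary point of $\{p<0\}$ with $\varphi\neq-\pi/2$ and $b=-1<0$, use the sign identity from \eqref{eq:pdotarg} to obtain a transverse crossing of $\{p=0\}$ into the inconsistent region, and then inflate this to an open set by continuity. This is more elementary and self-contained (it does not require the spectral analysis of $\mat{K}$), and it isolates the precise reason the $3D$ problem differs from the $2D$ one --- the $\cos^2\varphi$ factor in \eqref{eq:pdotarg}. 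The paper's route has the advantage of coming for free once \cref{thm:GB} is in place, and of localising the crossing orbits near the GB manifold, which matters for the separatrix picture developed afterwards.

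Two points are worth making explicit. First, taken literally in original time, $\dot p$ is not finite at $p=0$: since $F_z=-b/p$, the dominant term in $\dot p$ behaves as $b/(\eta p)$, so \eqref{eq:pdotarg} should be read as identifying the sign of $\dot p$ near $\{p=0\}$ (equivalently, as the $s$-time derivative $p'$ in the desingularised system \eqref{eq:general}); you correctly flag this and handle it via the rescaling $\eta p\,ds=dt$, which is exactly the right fix. Second, your final sentence about $p$ ``vanishing linearly in $s$'' is unnecessary: once the $s$-time crossing happens at some finite $s^\star$, the integrand $\eta p$ is bounded on $[s_0,s^\star]$ and the original-time transit $\int_{s_0}^{s^\star}\eta p\,ds$ is automatically finite; no rate of vanishing of $p$ needs to be invoked.
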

\begin{proof}
Follows from \cref{thm:GB}.
\end{proof}

In \cref{fig:pp3,fig:pp6}, orbits either follow the (strongly) unstable manifold of the GB manifold in the fourth quadrant, and remain slipping, or tend to the (strongly) unstable manifold of the GB manifold in the second quadrant, reaching the inconsistent region. In \cref{fig:pp4,fig:pp5,fig:pp7}, all orbits reach the inconsistent region, whether tending to a (strongly) unstable manifold in the third quadrant (\cref{fig:pp4,fig:pp5}) or rotating round the GB manifold (locally a set of unstable foci \cref{fig:pp7}).

In \cref{fig:pp1,fig:pp2,fig:pp3}, there exist unique orbits which reach the GB manifold. These types of orbit have been shown to be of interest \cite{Kristiansen2018}.

%We begin by considering those found in simulations, when at least one eigenvalue vanishes. 

\subsection{Zero eigenvalue}
\label{subsec:zeroeigenvalue}
\cref{fig:planesmu1p4_w_traj,fig:planesmu1p7_w_traj} show that the dynamics of the $3D$ problem  is governed, not only by changes in the topology of the GB manifold $b=p=0$, but also by changes in the sign of the eigenvalues.

At least one of the eigenvalues vanishes when $\det{\mat{K}}=\lambda_+\lambda_-=K_{11}K_{22}-K_{12}K_{21}=0$. Hence, from \cref{eq:detK}, $\det{\mat{K}}=0$ when
\begin{align}
\label{eq:detK0}
 A \Psi^2+ B \Psi\Theta+\Gamma\Psi &= \Delta\Theta^2+E \Theta
\end{align}
where
\begin{align}
\label{eq:detK0coeffs}
A & = -2\eta\alpha^2\mu^2\tan^3\theta\cos^2\varphi, \nonumber \\
B & = 2\eta\alpha\mu\cos\varphi\tan\theta(1+\tan^2\theta)[2\tan^2\theta-(1+\alpha)], \nonumber \\
\Gamma & = 2\alpha^2\mu^2\cos^3\varphi\tan^2\theta, \nonumber \\
\Delta & = 2\eta\tan\theta(1+\tan^2\theta)^2[\tan^2\theta-(1+\alpha)], \nonumber \\
E & = \alpha\mu\cos^2\varphi (1+\tan^2\theta)[2\tan^2\theta+(1+\alpha)].
\end{align}
Equations \cref{eq:detK0} and \cref{eq:detK0coeffs} must be evaluated on the GB manifold $b=p=0$. The term $p(\theta,\varphi)$ is independent of $\Psi$ and $\Theta$ \cref{eq:p}, and $b(\Psi,\Theta,\theta)=0$ can be written as 
\begin{align}
\label{eq:B}
M\Psi^2+\Theta^2=N
\end{align} 
where 
\begin{align}
\label{eq:Bcoeffs}
M & = \frac{1}{1+\tan^2\theta}, \nonumber \\
N & = \frac{(1+\tan^2\theta)^{\frac{1}{2}}}{\tan\theta}
\end{align}
\cref{eq:b}.
In $(\Psi,\Theta)$-space, \cref{eq:detK0} is a conic, depending on the sign of  the discriminant $B^2+4A\Delta$, and \cref{eq:B} is an ellipse. So in general, we can expect 0, 2 or 4 solutions of these equations, corresponding to intersections of the two conics. Eliminating $\Theta$ from \cref{eq:detK0} and \cref{eq:B} gives a quartic in $\Psi$:
\begin{align}
\label{eq:Psiquartic}
\sum_{i=0}^4C_i\Psi^i=0,
\end{align}
where
\begin{align}
C_4 & = (A+M\Delta)^2+MB^2, \nonumber \\
C_3 & = 2[A\Gamma+M(\Gamma\Delta-EB)], \nonumber \\
C_2 & = \Gamma^2-2\Delta N (A+M\Delta)-NB^2+ME^2, \nonumber \\
C_1 & = 2N(EB-\Gamma\Delta), \nonumber \\
C_0 & = N(N\Delta^2-E^2).
\end{align}
Since the $C_i$ are all real, we do indeed have either 0, 2 or 4 real roots for $\Psi$, which will depend on $\eta$ in general.

A theoretical analysis of \cref{eq:Psiquartic} becomes unwieldy very quickly. In order to progress, we focus on the case $\Psi=0$ and on the behaviour near $\mu=\mu_\mathrm{P}$ for $\Psi\ne0$. 

\subsection{The case \texorpdfstring{$\Psi=0$}{Psi=0}}
\label{subsec:bifPsi0}
%When $\Psi=0$, we show that unstable foci can exist on the GB manifold, as well as nodes and saddles, but only in $\Theta<0$. 
From \cref{eq:K}, a focus occurs when $(K_{11}-K_{22})^2+4K_{12}K_{21}<0$. Setting $\Psi=0$ in \cref{eq:K}, we see that
\begin{align}
\label{eq:surd}
 (K_{11}-K_{22})^2+4K_{12}K_{21} & = R_2\Theta^2 + R_1\Theta + R_0  
\end{align}
where
\begin{align}
\label{eq:surdcoeffs}
    R_2 & = \eta^2[3\tan\theta-(1+\alpha)\cot\theta)]^2, \nonumber \\
    R_1 & = 2\eta\alpha\mu\cos^2\theta\cos^2\varphi[3\tan\theta+(1+\alpha)\cot\theta], \nonumber \\
    R_0 & = \alpha^2\mu^2\cos^4\theta\cos^4\varphi.
\end{align}
The quadratic \cref{eq:surd} is always positive when $\Theta>0$, and so foci do not occur on the GB manifold in this case.

But it is possible for $(K_{11}-K_{22})^2+4K_{12}K_{21} <0$ for $\mu>\mu_\mathrm{C}$ when $\Theta \in (\Theta_-, \Theta_+)$ where
\begin{align}
\label{eq:Thetapm}
    \Theta_{\pm} & = -\frac{\alpha\mu\cos^2\theta\cos^2\varphi}{\eta[\sqrt{3\tan\theta}\pm\sqrt{(1+\alpha)\cot\theta}]^2}<0.
\end{align}
%In this case the real part of the corresponding eigenvalue is given by $\frac{1}{2}(K_{11}+K_{22})$ which is always positive from \cref{eq:K} for $\Theta<0$. So any focus that exists for $\Psi=0$ is always unstable. 

Numerical results show that unstable foci can also occur in $\Theta<0$ when $\Psi \ne 0$, see \cref{sec:alltogether}.
%\johnin{Can we get a stable focus for $\Psi \ne 0$? It is in the legend of \cref{fig:stabilty_on_GB}.}

When $\Psi = 0$, \cref{eq:Psiquartic} reduces to 
\begin{align}
\label{eq:C0}
C_0=0 & \iff N\Delta^2-E^2=0,
\end{align}
 since $N \ne 0$ in general\footnote{$N=0$ corresponds to $\theta=\frac{\pi}{2}$, the case of the vertical rod, which we have excluded from our analysis.}. After a lengthy calculation,  \cref{eq:C0} reduces to 
 \begin{align}
 \label{eq:ghastly}
 \alpha\mu\cos^2\varphi & =\pm2\eta\tan^{\frac{1}{2}}\theta(1+\tan^2\theta)^{\frac{5}{4}}\frac{[(1+\alpha)-\tan^2\theta]}{[(1+\alpha)+2\tan^2\theta]}
  \end{align}
where $\pm$ corresponds to $\Theta \gtrless 0$. We use \cref{eq:ghastly} as a check on our numerical results in \cref{sec:alltogether}.

%Inspecting the signs of the second third and fourth elements of $\left.{\mat{K}}\right|_{\Psi=0}$, we find that for $\Theta>0$ and $\varphi\neq-\pi/2$, points on the GB manifold are stable nodes or saddles, with either the weak or unstable eigenvectors in the first and third quadrants, and either the strong or stable eigenvectors in the second and third (as in \cref{fig:planesmu1p4_w_traj,fig:planesmu1p4_w_traj}). When $\Theta<0$ and $\varphi\neq-\pi/2$, points on the GB manifold are either saddles or unstable nodes or foci. If a saddle, the stable eigenvector lies in the first and third quadrants. Both eigenvectors must lie in the same quadrant if they exist. As \cref{eq:general} is only valid when slipping (fourth quadrant), objects in other quadrants are artefacts of this transformed system.

\subsection{Behaviour near \texorpdfstring{$\mu=\mu_\mathrm{P}$}{critical mu}}
\label{subsec:nearmuP}
We consider \cref{eq:Psiquartic} near $\mu=\mu_\mathrm{P}=\frac{2}{\alpha}\sqrt{1+\alpha}$. From \cref{fig:muPsicases}, we can see that any perturbation analysis will tell us about bifurcations in the dynamics on the GB manifold for $\Psi \lesssim \Psi_\mathrm{P}$, where $\Psi_\mathrm{P}$ is given in \cref{eq:psip}. 

At $\mu = \mu_\mathrm{P}$ we have $\varphi=-\frac{\pi}{2}$ and $\tan \theta_\mathrm{P}=\sqrt{1+\alpha}$. Let us take
\begin{align}
\label{eq:mupperturb}
\mu & = \mu_\mathrm{P}(1+\varepsilon^2\hat{\mu}) = \frac{2}{\alpha}\sqrt{1+\alpha}(1 +\varepsilon^2\hat{\mu}), \nonumber \\
\varphi & =  -\frac{\pi}{2} + \varepsilon \hat{\varphi}, \nonumber \\
\theta_{\pm}&=\theta_\mathrm{P}+\varepsilon\hat{\theta}_{\pm},
\end{align}
where $\varepsilon \ll 1$ and $\hat{\mu} \ge 0$ since there is no paradox for $\mu <\mu_\mathrm{P}$. Then 
\begin{align}
\label{eq:tanthetaperturb}
\tan \theta_{\pm}&\approx\tan \theta_\mathrm{P}+\varepsilon\hat{\theta}_{\pm}(1+\tan^2 \theta_\mathrm{P})=\sqrt{1+\alpha}+(2+\alpha)\varepsilon\hat{\theta}_{\pm}
\end{align}
and hence from \cref{eq:thetastar} we have
\begin{align}
\label{eq:thetaperturb}
\hat{\theta}_{\pm} &= \pm\frac{\sqrt{1+\alpha}}{2+\alpha}\sqrt{2\hat{\mu}-\hat{\varphi}^2}.
\end{align}
In addition, we note that $\cos \varphi =\cos (-\frac{\pi}{2} + \varepsilon \hat{\varphi}) \approx \varepsilon \hat{\varphi}$. 

So now we can obtain the leading order terms in the coefficients $A, B, \Gamma, \Delta, E$ of \cref{eq:detK0coeffs} and hence in the coefficients $C_i$ of \cref{eq:Psiquartic}. After a lengthy calculation we find, to  leading order, that \cref{eq:Psiquartic} becomes
\begin{align}
\label{eq:Psiquarticeps}
\sum_{i=0}^4C^{\varepsilon}_i\Psi^i=0,
\end{align}
where
\begin{align}
C^{\varepsilon}_4 & = (1+\alpha)^2[2(2+\alpha)\hat{\mu}-\hat{\varphi}^2], \nonumber \\
C^{\varepsilon}_3 & = 0, \nonumber \\
C^{\varepsilon}_2 & = -(1+\alpha)^{\frac{3}{2}}(2+\alpha)^{\frac{3}{2}}[4(2+\alpha)\hat{\mu}-(3+\alpha)\hat{\varphi}^2], \nonumber \\
C^{\varepsilon}_1 & = 0, \nonumber \\
C^{\varepsilon}_0 & = (1+\alpha)(2+\alpha)^4[2\hat{\mu}-\hat{\varphi}^2].
\end{align}
In \cref{fig:expan} we show agreement between \cref{eq:Psiquartic} and \cref{eq:Psiquarticeps} for $\varepsilon^2\hat{\mu} =0.01$ when $\alpha=3$, corresponding to mechanism II, \cref{fig:mechanisms}. There are several points to note. From the group symmetry property \cref{eq:group}, we plot results for $\cos\varphi>0$ only. For $|\Psi|<\Psi_\mathrm{L}=1.6119$, the perturbation analysis \cref{eq:Psiquarticeps} in blue is in good agreement with the exact results \cref{eq:Psiquartic} in red and yellow, for this type \circled{3} behaviour\footnote{From \cref{eq:Psiquarticeps}, when $\Psi=0$, we have simply $C^{\varepsilon}_0=0 \implies \hat{\mu}=\frac{1}{2}\hat{\varphi}^2$ and so $\varepsilon \hat{\varphi}\approx\pm0.1414\approx\cos \varphi$, giving symmetrically placed bifurcations around $\varphi =-\frac{\pi}{2}$.}.

Since here $\mu=1.01\mu_\mathrm{P}\approx1.3467$, we have that $\Psi_1=2.1526$, $\Psi_2=2.6220$. In \cref{fig:expan}, for $\Psi_\mathrm{L}<|\Psi|<\Psi_1$, we have type \circled{4} behaviour. The positions of the bifurcations get closer to $\varphi =-\frac{\pi}{2}$, as the topology of the GB manifold is about to change. Here the agreement between \cref{eq:Psiquarticeps} and \cref{eq:Psiquartic} is not so good. For $\Psi_1<|\Psi|<\Psi_2$, agreement is poor, because the topology of the GB manifold has completely changed.

% \begin{figure}[htbp]
% \begin{subfigure}{0.46\textwidth}
%     \begin{overpic}[width=\textwidth]{}
%     \put(0,50){$\Psi$}
%     \put(45,3){$\cos{\varphi}$}
%     \put(12,10){0}
%     \put(35,10){0.05}
%     \put(62,10){0.1}
%     \put(85,10){0.15}
%     \put(7,21){-2}
%     \put(7,36){-1}
%     \put(7,53){\phantom{-}0}
%     \put(7,69){\phantom{-}1}
%     \put(7,86){\phantom{-}2}
%     \put(15,54.5){\fbox{\begin{minipage}{0.53\textwidth}
%      {\color{md1}{$\blacksquare$}}$\sum_{i=0}^4C_i^\varepsilon\Psi^i=0$\\ 
%      {\color{md2}{$\blacksquare$}}$\left.\sum_{i=0}^4C_i\Psi^i\right|_{\theta=\theta_+}=0$\\ {\color{md3}{$\blacksquare$}}$\left.\sum_{i=0}^4C_i\Psi^i\right|_{\theta=\theta_-}=0$
%     \end{minipage}}}
%     \end{overpic}
%     \caption{}
% \end{subfigure}
% \begin{subfigure}{0.46\textwidth}
%     \begin{overpic}[width=\textwidth]{}
%     \put(0,50){$\Psi$}
%     \put(45,3){$\cos{\varphi}$}
%     \put(13,10){0}
%     \put(36,10){0.05}
%     \put(62,10){0.1}
%     %\put(85,10){0.15}
%     \put(6,15){1.8}
%     \put(6,29){1.9}
%     \put(6,42){\phantom{0.}2}
%     \put(6,56){2.1}
%     \put(6,69){2.2}
%     \put(6,83){2.3}
%     \end{overpic}
%     \caption{}
% \end{subfigure}
%           \caption{Comparison of \cref{eq:Psiquartic} and \cref{eq:Psiquarticeps} for $\varepsilon^2\hat{\mu} =0.01$, $\eta=1$, and $\alpha=3$.} 
%     \label{fig:expan}
% \end{figure}

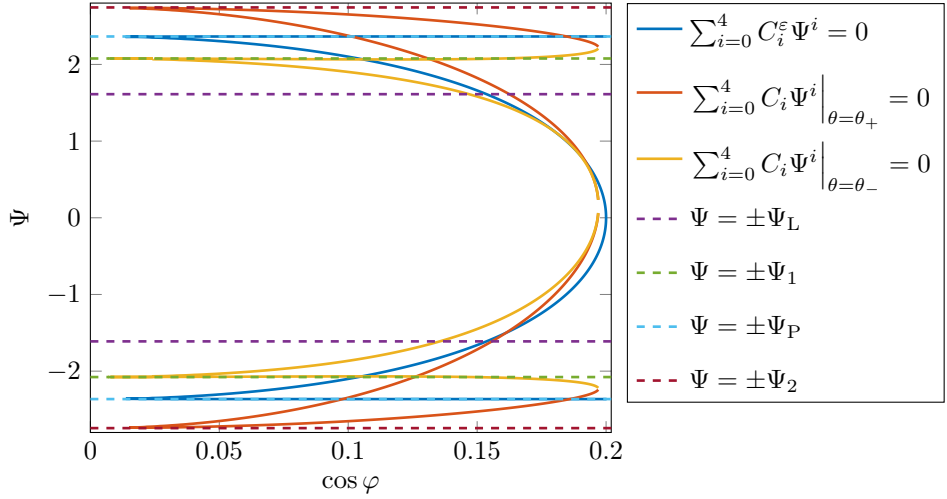
\begin{figure}
\centering
\pgfplotstableset{col sep=comma}
\pgfplotsset{scaled x ticks=false}
\begin{tikzpicture}
\begin{axis}[
    xlabel={$\cos{\varphi}$},
    ylabel={$\Psi$},
    xmin=0, xmax=0.202,%change here to zoom
    ymin=-2.8, ymax=2.8,
    xtick={0,0.05,0.1,0.15,0.2},
    xticklabels={0,0.05,0.1,0.15,0.2},
    ytick={-2,-1,0,1,2},
    legend pos=outer north east,
    legend cell align={left},
    grid style=dashed,
    unbounded coords=jump,
    legend style={row sep=8pt},
]
\addplot[color=md1,line width=1pt] table{Figs/asymptotics/data/C.txt};
\addplot[color=md2,line width=1pt] table{Figs/asymptotics/data/Cepsp.txt};
\addplot[color=md3,line width=1pt] table{Figs/asymptotics/data/Cepsm.txt};
\addplot[dashed,color=md4,line width=1pt] table{Figs/asymptotics/data/Psi_l.txt};
\addplot[dashed,color=md5,line width=1pt] table{Figs/asymptotics/data/Psi_1.txt};
\addplot[dashed,color=md6,line width=1pt] table{Figs/asymptotics/data/Psi_P.txt};
\addplot[dashed,color=md7,line width=1pt] table{Figs/asymptotics/data/Psi_2.txt};
\legend{$\sum_{i=0}^4 C_i^\varepsilon\Psi^i=0$,
        $\left.\sum_{i=0}^4 C_i\Psi^i\right|_{\theta=\theta_+}=0$,
        $\left.\sum_{i=0}^4 C_i\Psi^i\right|_{\theta=\theta_-}=0$,
        $\Psi=\pm\Psi_\mathrm{L}$,$\Psi=\pm\Psi_1$,$\Psi=\pm\Psi_\mathrm{P}$,$\Psi=\pm\Psi_2$}
\end{axis}
\end{tikzpicture}
\caption{Comparison of $\sum_{i=0}^4 C_i\Psi^i=0$ \cref{eq:Psiquartic} and $\sum_{i=0}^4 C_i^\varepsilon\Psi^i=0$ \cref{eq:Psiquarticeps} for $\varepsilon^2\hat{\mu} =0.02$, $\eta=1$, and $\alpha=3$.}
    \label{fig:expan}
\end{figure}
\iffalse
\johnin{Very nice diagram - the agreement is excellent. Still a few issues:\\
1. $\cos \varphi=0.1414$ seems to fit for maximum when $|\Psi|<\Psi_\mathrm{L}=1.6119$ and when $\Psi_1<|\Psi|<\Psi_2$. Can we modify perturbation analysis to agree with this second range of $\Psi$?
}
\noahin{1. maybe this can be extended? Is there some sort of implicit assumption that the  GB manifold can be projected down onto $(\varphi,\theta)$ as a closed curve for $\theta_\pm=\pm...$? This breaks at $\Psi=\Psi_1$ so I'm not sure where I'd start. }
\noahin{I've gone a bit loopy and done this using ``pgfplots'', this way you can zoom in on different things to your heart's content simply by changing \texttt{xmin}, \texttt{xmax} etc. \textit{within} the latex.  Changing colour, linestyle, size, ... are all equally easy}
\fi

\section{Putting it all together}
\label{sec:alltogether}
In \cref{subsec:roleofPsi}, we saw the importance of $\Psi$ in the topology of the GB manifold (to the extent that it does not exist for $|\Psi| > \Psi_2$). In \cref{sec:slip}, we saw how the dynamics can vary on the GB manifold. Here, we put both these effects together to provide a full picture of the geometry of the $3D$ Painlev\'e paradox.

In \cref{fig:stabilty_on_GB}, we see the effect the increase in $\Psi$ has on the dynamics when $\alpha=3$ for two values of $\mu$. The position of the zeros of $\det\mat{K}$ around the GB manifold is a strong function of $\Psi$.
\begin{figure}[htbp]
    \centering
    \begin{subfigure}{0.45\textwidth}
\begin{overpic}[width=\textwidth]{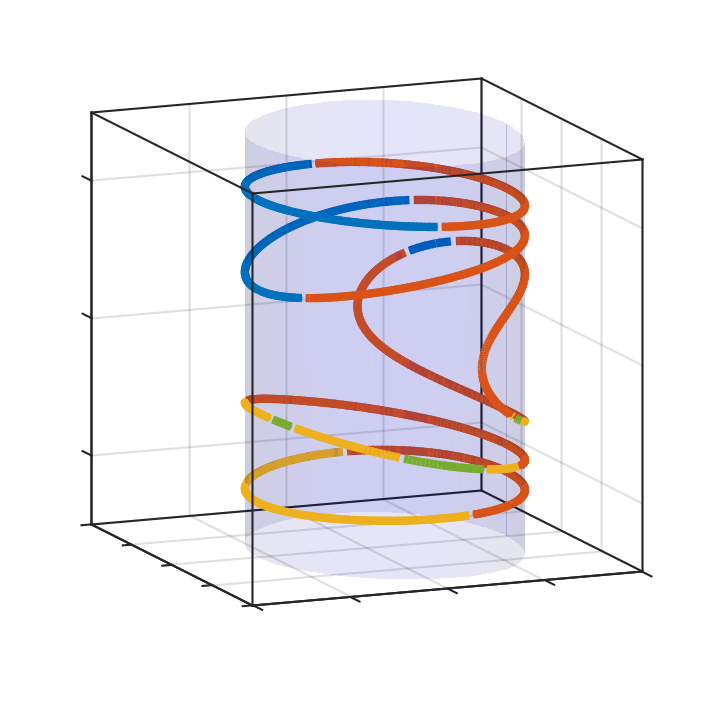}
\put(8,55){0}
\put(8,75){1}
\put(6,36){-1}
\put(0,60){$\Theta$}
\put(1,23){$-\frac{3\pi}{8}$}
\put(13,18){$-\frac{\pi}{2}$}
\put(23,11){$-\frac{5\pi}{8}$}
\put(5,10){$\varphi$}
\put(76,13){$\frac{3\pi}{8}$}
\put(50,9){$\frac{5\pi}{16}$}
\put(80,2){$\theta$}
\end{overpic}
\caption{$\mu=1.4$}
\end{subfigure}
\hfil
\begin{subfigure}{0.45\textwidth}
\begin{overpic}[width=\textwidth]{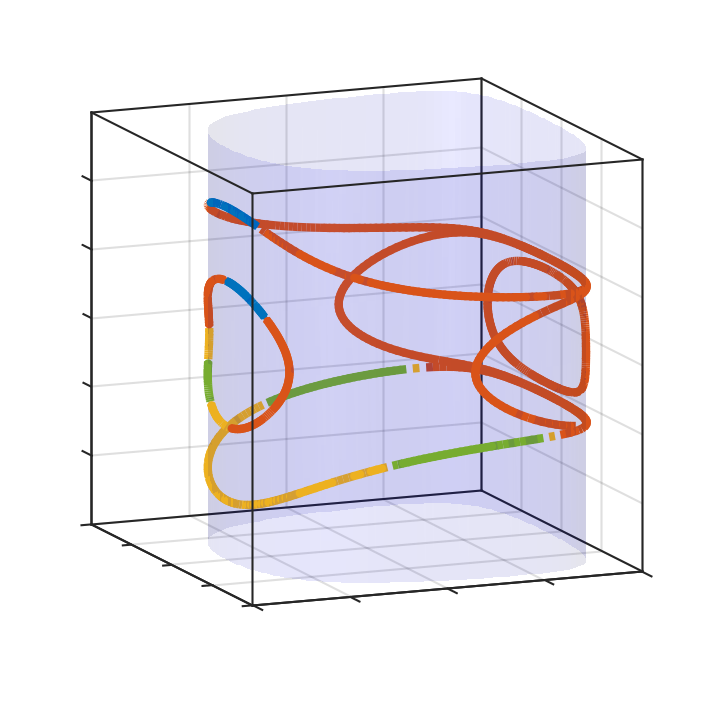}
\put(8,55){0}
\put(8,85){2}
\put(6,27){-2}
\put(0,60){$\Theta$}
\put(9,22){$0$}
\put(13,18){$-\frac{\pi}{2}$}
\put(26,12){$-\pi$}
\put(5,10){$\varphi$}
\put(36,9){$0$}
\put(64,11){$\frac{\pi}{4}$}
\put(91,13){$\frac{\pi}{2}$}
\put(80,2){$\theta$}
\end{overpic}
\caption{$\mu=6$}
\end{subfigure}
    \caption{Topology and stability of the GB manifold(s) for varying $\Psi$ when $\alpha=3$. Stable node {\color{md1}{$\blacksquare$}}, saddle {\color{md2}{$\blacksquare$}}, unstable node {\color{md3}{$\blacksquare$}}, and unstable focus {\color{md5}{$\blacksquare$}}. GB manifolds are shown for $\Psi=0$, $\Psi=(\Psi_\mathrm{L}+\Psi_1)/2$ and $\Psi=(\Psi_1+\Psi_2)/2$.}
    \label{fig:stabilty_on_GB}
\end{figure}
We demonstrate this asymmetry in another way in \cref{fig:determinant_bif}, for $\alpha=3$, $\mu=1.7$ and $\eta=10$, where $\Psi_\mathrm{P}=2.3644$ and $\Psi_\mathrm{L}=1.6119$.

% \begin{figure}
%     \centering
%     \begin{overpic}[width=0.6\textwidth]{}
%     \put(2,42){$\Psi$}
%     \put(46,3){$\cos{\varphi}$}
%     \put(9,13){0}
%     \put(9,26){1}
%     \put(9,39){2}
%     \put(9,52){3}
%     \put(9,65){4}
%     \put(17,9){-0.5}
%     \put(51,9){0}
%     \put(81,9){0.5}
%     %\put(22.5,41.5){\circle{2}}
%     %\put(84.5,65.5){\circle{2}}
%     \end{overpic}
%     \begin{overpic}[height=0.5\textwidth]{}
%         \put(5,50){{\fbox{\begin{tabular}{ll}
%          {\color{md1}{\rule[1pt]{12pt}{1pt}}}& $\left.\det\mat{K}\right|_{\theta_{+},\Theta>0}=0$\\
%         {\color{md2}{\rule[1pt]{12pt}{1pt}}}&$\left.\det\mat{K}\right|_{\theta_{-},\Theta>0}=0$\\
%         {\color{md3}{\rule[1pt]{6pt}{1pt} \rule[1pt]{6pt}{1pt}}}& $\Psi=\Psi_1$\\
%         {\color{md5}{\rule[1pt]{6pt}{1pt} \rule[1pt]{6pt}{1pt}}}&$\Psi=\Psi_2$\\
%         {\rule[2pt]{6pt}{1pt} \rule[2pt]{6pt}{1pt}}&$\Psi=1.062$
%     \end{tabular}}}}
%     \end{overpic}
%     \caption{Positions of the zeros of $\det\mat{K}$ on the GB manifold, for $\alpha=3$, $\mu=1.7$ and $\eta=10$. The section $\Psi=1.0620$ is shown in \cref{fig:pandora}. Here $\Psi_1=1.6689$ and $\Psi_2=4.3121$}
%     \label{fig:determinant_bif}
% \end{figure}

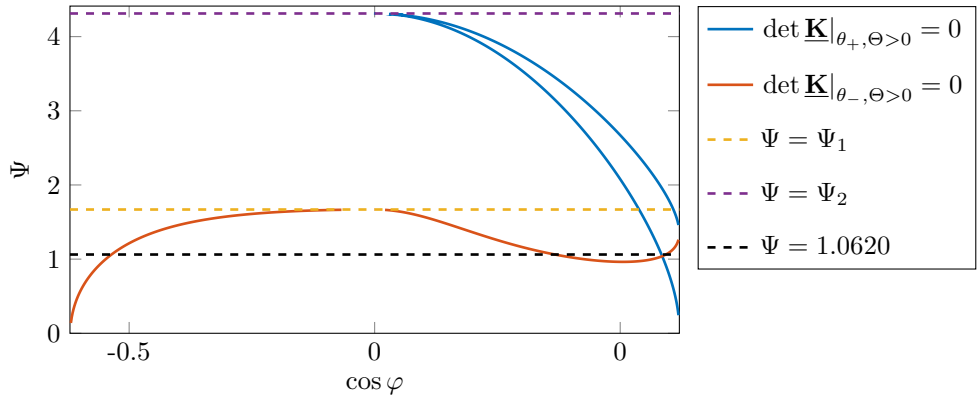
\begin{figure}
\centering
\pgfplotstableset{col sep=comma}
\pgfplotsset{scaled x ticks=false}
\pgfplotsset{width=0.6\textwidth,height=0.37\textwidth}
\begin{tikzpicture}
\begin{axis}[
    xlabel={$\cos{\varphi}$},
    ylabel={$\Psi$},
    xmin=-0.6205, xmax=0.6205,%change here to zoom
    ymin=0, ymax=4.4121,
    xtick={-0.5,0,0.5},
    xticklabels={-0.5,0,0,5},
    ytick={0,1,2,3,4},
    legend pos=outer north east,
    legend cell align={left},
    grid style=dashed,
    unbounded coords=jump,
    legend style={row sep=8pt},
]
\addplot[color=md1,line width=1pt] table{Figs/notasymptotics/data/detp.txt};
\addplot[color=md2,line width=1pt] table{Figs/notasymptotics/data/detm.txt};
\addplot[dashed,color=md3,line width=1pt] table{Figs/notasymptotics/data/Psi1.txt};
\addplot[dashed,color=md4,line width=1pt] table{Figs/notasymptotics/data/Psi2.txt};
\addplot[dashed,color=black,line width=1pt]
coordinates{(-0.6205,1.0620)(0.6205,1.0620)};
\legend{$\left.\det\mat{K}\right|_{\theta_{+},\Theta>0}=0$,
        $\left.\det\mat{K}\right|_{\theta_{-},\Theta>0}=0$,
        $\Psi=\Psi_1$,$\Psi=\Psi_2$,$\Psi=1.0620$}
\end{axis}
\end{tikzpicture}
    \caption{Positions of the zeros of $\det\mat{K}$ on the GB manifold, for $\alpha=3$, $\mu=1.7$ and $\eta=10$. The section $\Psi=1.0620$ is shown in \cref{fig:pandora}. Here $\Psi_1=1.6689$ and $\Psi_2=4.3121$}
    \label{fig:determinant_bif}
\end{figure}

In \cref{fig:pandora} we show the eigenvectors around the GB manifold when $\alpha=3$, $\mu=1.7$ and $\eta=10$ for the case $\Psi=1.0620$ from \cref{fig:determinant_bif}. There are four critical points, labelled (d), (h), (j) and (l). Since $\Psi \ne 0$, there is no symmetry about $\varphi=-\pi/2$ and hence we show eigenvectors for $\varphi\gtrless-\pi/2$. In every case, except (b) and (f) where $\varphi=-\frac{\pi}{2}$, trajectories in the slipping region (in yellow) can enter the inconsistent region (in lime). \cref{fig:evsc,fig:evse,fig:evsg,fig:evsi,fig:evsk,fig:evsm}, the phase portraits away from bifurcations and $\varphi=-\pi/2$, agree with \cref{thm:GB}.

\begin{figure}[htbp]
    \centering
    \begin{subfigure}{0.45\textwidth}
    \centering
    \begin{overpic}[width=0.99\textwidth]{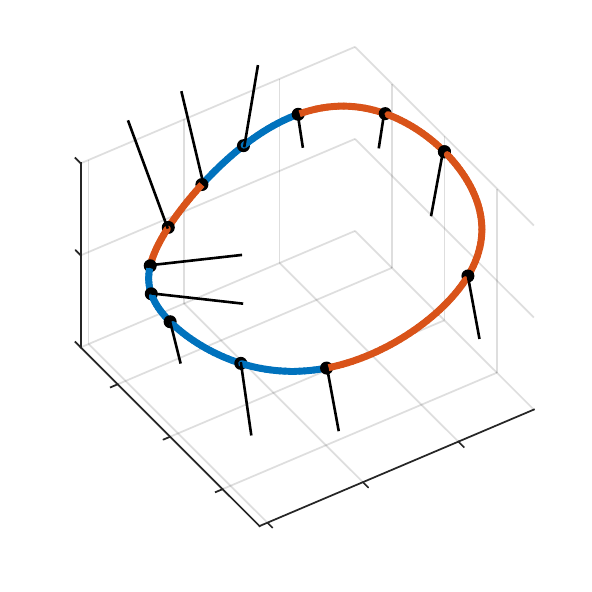}
    \put(10,10){$\varphi$}
    \put(7,33){$-\frac{3}{8}\pi$}
    \put(16,24){$-\frac{1}{2}\pi$}
    \put(25,15){$-\frac{5}{8}\pi$}
    \put(83,8){$\theta$}
    \put(45,4){$\frac{1}{4}\pi$}
    \put(60,11){$\frac{5}{16}\pi$}
    \put(77,17){$\frac{3}{8}\pi$}
    \put(0,65){$\Theta$}
    \put(5.5,43){$0.8$}
    \put(5.5,59){$0.9$}
    \put(9,74){$1$}
    \put(28,34){(b)}
    \put(40,21){(c)}
    \put(54,22){(d)}
    \put(79,37){(e)}
    \put(70,58){(f)}
    \put(61,70){(g)}
    \put(49,70){(h)}
    \put(41,91){(i)}
    \put(28,87){(j)}
    \put(18,81){(k)}
    \put(41.5,56){(l)}
    \put(42,46){(m)}
    \end{overpic}
    \caption{}
    \end{subfigure}
    \begin{minipage}{0.5\textwidth}
    \begin{subfigure}{0.48\textwidth}
    \centering
    \begin{overpic}[width=0.99\textwidth]{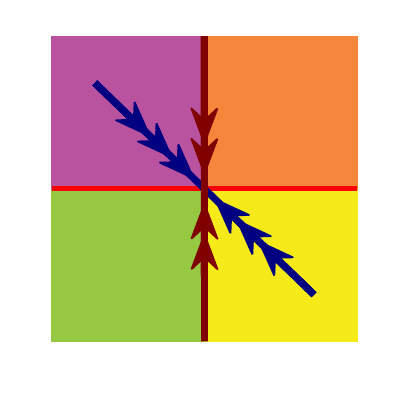}
            \put(50,0){$p$}
        \put(25,0){$-$}
        \put(75,0){$+$}
                \put(0,50){$b$}
        \put(0,25){$-$}
        \put(0,75){$+$}
    \end{overpic}
    \caption{}\label{fig:evsb}
    \end{subfigure}
        \begin{subfigure}{0.48\textwidth}
    \centering
    \begin{overpic}[width=0.99\textwidth]{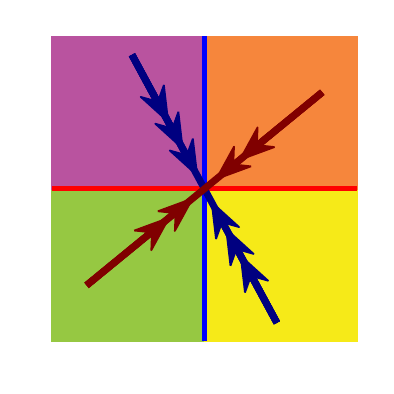}
            \put(50,0){$p$}
        \put(25,0){$-$}
        \put(75,0){$+$}
                \put(0,50){$b$}
        \put(0,25){$-$}
        \put(0,75){$+$}
    \end{overpic}
    \caption{}\label{fig:evsc}
    \end{subfigure}\\
    \begin{subfigure}{0.48\textwidth}
    \centering
    \begin{overpic}[width=0.99\textwidth]{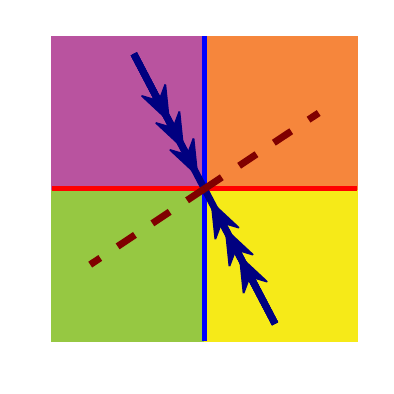}
            \put(50,0){$p$}
        \put(25,0){$-$}
        \put(75,0){$+$}
                \put(0,50){$b$}
        \put(0,25){$-$}
        \put(0,75){$+$}
    \end{overpic}
    \caption{}\label{fig:evsd}
    \end{subfigure}
        \begin{subfigure}{0.48\textwidth}
    \centering
    \begin{overpic}[width=0.99\textwidth]{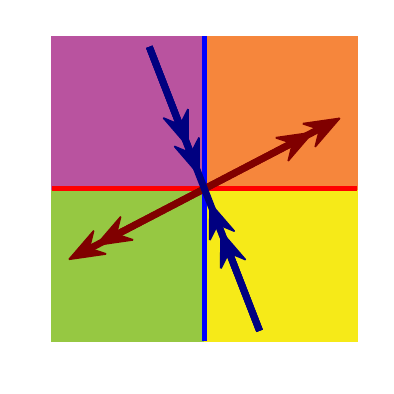}
            \put(50,0){$p$}
        \put(25,0){$-$}
        \put(75,0){$+$}
                \put(0,50){$b$}
        \put(0,25){$-$}
        \put(0,75){$+$}
    \end{overpic}
    \caption{}\label{fig:evse}
    \end{subfigure}
    \end{minipage}
    
            \begin{subfigure}{0.24\textwidth}
    \centering
    \begin{overpic}[width=0.99\textwidth]{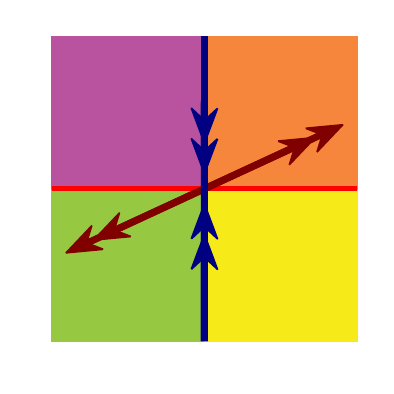}
            \put(50,0){$p$}
        \put(25,0){$-$}
        \put(75,0){$+$}
                \put(0,50){$b$}
        \put(0,25){$-$}
        \put(0,75){$+$}
    \end{overpic}
    \caption{}\label{fig:evsf}
    \end{subfigure}
                \begin{subfigure}{0.24\textwidth}
    \centering
    \begin{overpic}[width=0.99\textwidth]{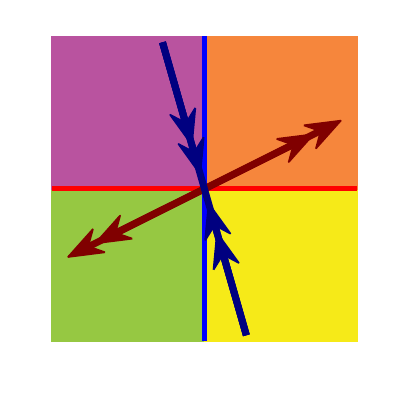}
            \put(50,0){$p$}
        \put(25,0){$-$}
        \put(75,0){$+$}
                \put(0,50){$b$}
        \put(0,25){$-$}
        \put(0,75){$+$}
    \end{overpic}
    \caption{}\label{fig:evsg}
    \end{subfigure}
                    \begin{subfigure}{0.24\textwidth}
    \centering
    \begin{overpic}[width=0.99\textwidth]{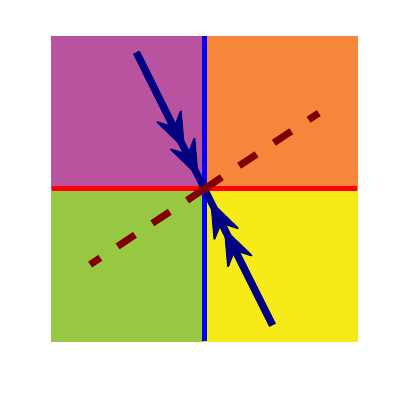}
            \put(50,0){$p$}
        \put(25,0){$-$}
        \put(75,0){$+$}
                \put(0,50){$b$}
        \put(0,25){$-$}
        \put(0,75){$+$}
    \end{overpic}
    \caption{}\label{fig:evsh}
    \end{subfigure}
    \begin{subfigure}{0.24\textwidth}
    \centering
    \begin{overpic}[width=0.99\textwidth]{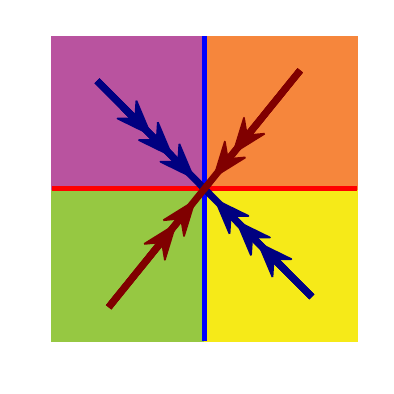}
            \put(50,0){$p$}
        \put(25,0){$-$}
        \put(75,0){$+$}
                \put(0,50){$b$}
        \put(0,25){$-$}
        \put(0,75){$+$}
    \end{overpic}
    \caption{}\label{fig:evsi}
    \end{subfigure}
    \begin{subfigure}{0.24\textwidth}
    \centering
    \begin{overpic}[width=0.99\textwidth]{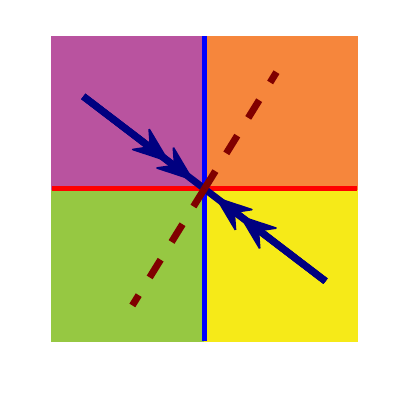}
            \put(50,0){$p$}
        \put(25,0){$-$}
        \put(75,0){$+$}
                \put(0,50){$b$}
        \put(0,25){$-$}
        \put(0,75){$+$}
    \end{overpic}
    \caption{}\label{fig:evsj}
    \end{subfigure}
    \begin{subfigure}{0.24\textwidth}
    \centering
    \begin{overpic}[width=0.99\textwidth]{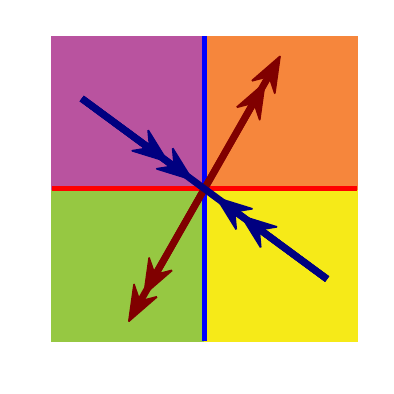}
            \put(50,0){$p$}
        \put(25,0){$-$}
        \put(75,0){$+$}
                \put(0,50){$b$}
        \put(0,25){$-$}
        \put(0,75){$+$}
    \end{overpic}
    \caption{}\label{fig:evsk}
    \end{subfigure}
    \begin{subfigure}{0.24\textwidth}
    \centering
    \begin{overpic}[width=0.99\textwidth]{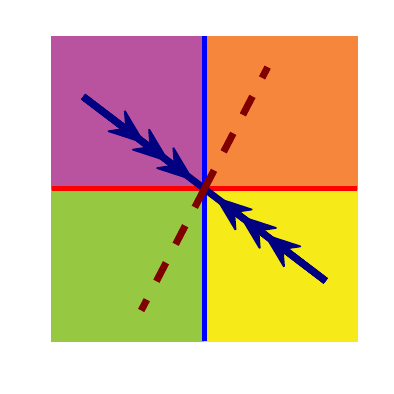}
            \put(50,0){$p$}
        \put(25,0){$-$}
        \put(75,0){$+$}
                \put(0,50){$b$}
        \put(0,25){$-$}
        \put(0,75){$+$}
    \end{overpic}
    \caption{}\label{fig:evsl}
    \end{subfigure}
    \begin{subfigure}{0.24\textwidth}
    \centering
    \begin{overpic}[width=0.99\textwidth]{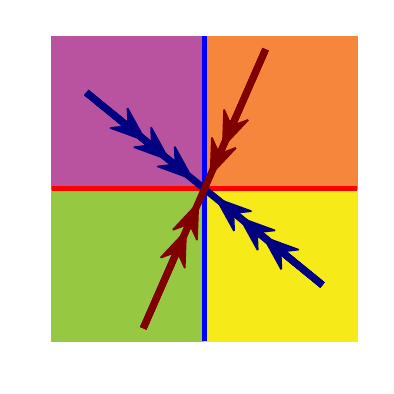}
            \put(50,0){$p$}
        \put(25,0){$-$}
        \put(75,0){$+$}
                \put(0,50){$b$}
        \put(0,25){$-$}
        \put(0,75){$+$}
    \end{overpic}
    \caption{}\label{fig:evsm}
    \end{subfigure}
    \caption{Eigenvectors along the GB manifold when $\alpha=3$, $\mu=1.7$ and $\eta=10$ for $\Psi=1.062$, with four critical points: (d), (h), (j) and (l). Since $\Psi \ne 0$, there is no symmetry about $\varphi=-\pi/2$ and hence we show eigenvectors for $\varphi\gtrless-\pi/2$.}
    \label{fig:pandora}
\end{figure}

Using numerical methods, Champneys and V\'{a}rkonyi \cite{Varkonyi18} found that trajectories are able to reach $p=0$ away from $b=0$ in for the $3D$ problem. From their results \cite[Figure 17]{Varkonyi18}, it is evident that a separatrix exists between slipping trajectories that reach $p=0$ and those that do not. In \cref{fig:my_ugly_simulations}, we see that this separatrix corresponds to the eigenvector associated with the leading eigenvalue of the linearisation of the system about the GB manifold\footnote{V\'arkonyi, P. 2021 \textit{Personal Communication} has pointed out that it was difficult to obtain the numerical results in \cite[Figure 17]{Varkonyi18}. We can now see that this is because the small eigenframe rotation around the GB manifold makes it difficult to find initial conditions between the eigenvector and $p=0$.}.
\iffalse
\begin{figure}[htbp]
        \centering
        \begin{overpic}[width=0.55\textwidth]{}
        \put(40,2){$\varphi$}
        \put(46,12){-1.9}
        \put(16,14){-1.8}
        \put(12,19){0.9}
        \put(15,44){1}
        \put(5,50){$\Theta$}
        \put(12,72){1.1}
        \put(65,11){1}
        \put(76,14){1.1}
        \put(90,17){1.2}
        \put(85,8){$\theta$}
        \put(78,68){\textcolor{red}{$b=0$}}
            \put(80,62){\textcolor{red}{/}}
            \put(28,80){\textcolor{blue}{$p=0$}}
           % \put(69,91){\textcolor{blue}{\textbf{\textbackslash}}}
            \put(30,74){\textcolor{blue}{/}}
        \end{overpic}
        \caption{Numerical integration of the equations \cref{eq:general} demonstrates that the separatrix between orbits that reach inconsistency ($p=0$) and those that lift off ($b=0$) is tangent to the eigenvector corresponding to the leading eigenvalue of the linearisation of the dynamics about each point on the GB manifold. Both solutions are for $\alpha=3$ and $\mu=1.4$. Initial conditions: {\color{md1}{$\blacksquare$}}  $\theta(0)=1.1071$, $\eta(0)=2$, $\varphi(0)=-1.8807$, $\Psi(0)=0$, $\Theta(0)=0.9787$ integrated in backward time. {\color{md2}{$\blacksquare$}} $\theta(0)=1.1031$, $\eta(0)=2$, $\varphi(0)=-1.9233$, $\Psi(0)=0$, $\Theta(0)=0.9000$ integrated in forward time.}
        \label{fig:my_ugly_simulations}
    \end{figure}
\fi
\begin{figure}
\centering
\begin{subfigure}[b]{0.45\textwidth}
\pgfplotsset{width=0.8\textwidth,height=0.8\textwidth}
\begin{tikzpicture}
\begin{axis}[
enlargelimits=false,
axis on top,,
xlabel={$p$},ylabel={$b$},
y label style={rotate=-90}
]
\addplot graphics [
xmin=-0.02,xmax=0.02,
ymin=-0.25,ymax=0.25
] {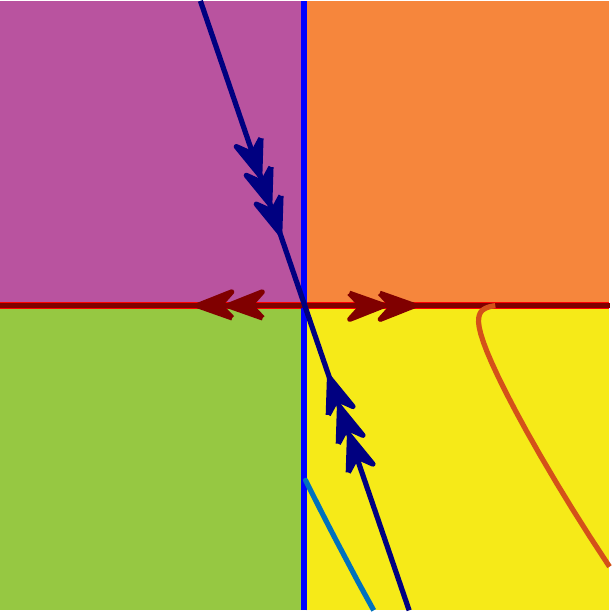};
\end{axis}
\end{tikzpicture}
\caption{}
\end{subfigure}
\hfil
\begin{subfigure}[b]{0.5\textwidth}
\begin{overpic}[width=\textwidth]{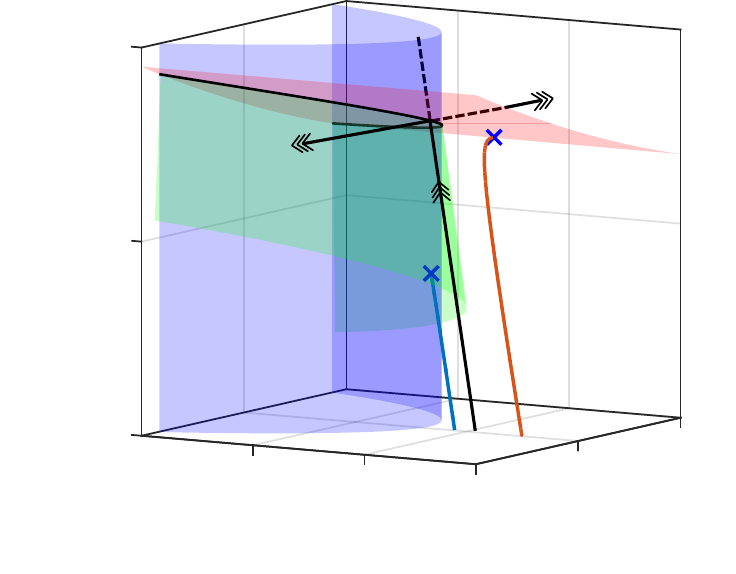}
\put(10,17){0.9}
\put(10,45){\phantom{0.}1}
\put(10,70){1.1}
\put(2,45){$\Theta$}
\put(28,3){$\varphi$}
\put(28,12){-1.85}
\put(45,10){-1.9}
\put(65,9){1}
\put(77,12){1.1}
\put(90,15){1.2}
\put(65,70){\textcolor{red}{$b(0,\Theta,\theta)=0$}}
{\color{red}\put(80,60){\line(-1,2){4}}}
%\put(80,60){\textcolor{red}{/}}
\put(5,8){\textcolor{blue}{$p(\theta,\varphi)=0$}}
%\put(30,74){\textcolor{blue}{/}}
{\color{blue}\put(15,11){\line(3,2){15}}}
\put(88,5){$\theta$}
\end{overpic}
\caption{}
\end{subfigure}
\caption{Numerical integration of the equations \cref{eq:general} demonstrates that the separatrix between orbits that reach inconsistency ($p=0$) and those that lift off ($b=0$) is tangent to the eigenvector corresponding to the leading eigenvalue of the linearisation of the dynamics about each point on the GB manifold. Both solutions are for $\alpha=3$ and $\mu=1.4$. Initial conditions: {\color{md1}{$\blacksquare$}}  $\theta(0)=1.1067$, $\eta(0)=2.1307$, $\varphi(0)=-1.8913$, $\Psi(0)=-0.1027$, $\Theta(0)=0.9000$; {\color{md2}{$\blacksquare$}} $\theta(0)=1.1031$, $\eta(0)=2$, $\varphi(0)=-1.9233$, $\Psi(0)=0$, $\Theta(0)=0.9000$. The surface in green is tangent to the eigenvector corresponding to the leading eigenvalue at each point along the GB manifold.}
        \label{fig:my_ugly_simulations}
\end{figure}

\section{Conclusion}
\label{sec:conclusion}
We have studied the problem of a rigid body slipping along a rough horizontal plane with one point of contact, subject to Coulomb friction.

In $2D$ this is the celebrated Painlev\'e problem \cite{Painleve1895, Painleve1905a, Painleve1905b}, which gives rise to paradoxes when the coefficient of friction $\mu$ exceeds a critical value $\mu_\mathrm{P}(\alpha)$, which depends on the moment of inertia of the rigid body.

In $3D$ the critical value $\mu_\mathrm{P}^*(\alpha,\varphi)$ is also dependent on the relative slip angle $\varphi$. We have shown that it is possible to avoid an existing paradox by a judicious choice of $\varphi$ (see \cref{fig:badsection}).

The $3D$ problem involves motion in the azimuthal direction $\psi$. We have shown that, in the absence of motion in the polar direction $\theta$, it is possible for the body to lift off the surface, whether or not there is a paradox, when the azimuthal angular velocity $\Psi=\dot{\psi}$ is such that $|\Psi|>\Psi_\mathrm{L}$, where $\Psi_\mathrm{L}$ is given in \cref{eq:psil}, see \cref{fig:psi1.6119}.

There are two other critical values of $\Psi$; $\Psi_1$ and $\Psi_2$ . When 
\begin{equation}
|\Psi|>\begin{cases}\Psi_1,& \mu\in(\mu_\mathrm{P},\mu_\mathrm{L}]\\
\Psi_\mathrm{L},& \mu>\mu_\mathrm{L}\\\end{cases},
\end{equation}
the rigid body can undergo an indeterminate paradox even when the polar angular velocity $\Theta=\dot{\theta}=0$ (see \cref{fig:psi1.9483}). When $|\Psi|>\Psi_2 > \Psi_1$, the inconsistent paradox vanishes (see \cref{fig:psi3.0097}).

These observations lead to the conclusion that there are three different mechanisms in the $3D$ Painlev\'e problem as $\Psi$ increases, dependent on $\mu$ (see \cref{fig:mechanisms}).

When not sticking, the rigid body can exist in four modes: slipping, lift-off, inconsistent and indeterminate. Transitions from slipping are of greatest interest. In the absence of a paradox, the rigid body can either slip or lift off, depending on the sign of the free acceleration $b$, given in \cref{eq:b}. A paradox occurs whenever the quantity $p$, given in \cref{eq:p}, becomes negative.

The surfaces $b=0$ and $p=0$ intersect in the GB manifold \cite{Varkonyi18}. We analyse the dynamics close to the GB manifold in \cref{sec:painleve3D}. In $2D$, slipping trajectories cannot pass through $p=0$ without also passing through $b=0$ \cite{Genot1999}. In $3D$ slipping trajectories cross $p=0$ away from the GB manifold except when $\varphi=-\frac{\pi}{2}$, and an open set of initial conditions that reach inconsistency from slipping (\cref{cor:main}). Hence the $2D$ problem is highly singular. For $|\Psi|>\Psi_2$, the GB manifold does not exist. In this case, slipping trajectories, which only exist in restricted parts of phase space, may lift off (unless they stick).

We have discovered bifurcations in the dynamics on the GB manifold, which explain behaviour in the $2D$ problem (see \cref{fig:stabilty_on_GB}), as well as changes in the topology of the GB manifold. Finally, we have shown that there is a separatrix between orbits that reach inconsistency ($p=0$) and those that undergo lift-off  ($b=0$) tangent to the eigenvector corresponding to the smallest eigenvalue of the linearisation of the dynamics about the GB manifold for $\Theta>0$ (see \cref{thm:GB} and \cref{fig:ppsThetapos,fig:ppsThetaneg,fig:my_ugly_simulations}).

The discovery of the Painlev\'e paradoxes caused controversy because it implied that rigid body theory and Coulomb friction could be incompatible. Lecornu \cite{Lecornu1905b} suggested that in order to escape an inconsistent Painlev\'e paradox, there should be a jump in velocity (``\textit{l'arcboutement dynamique}"). Subsequently this became known as \textit{impact without collision} or \textit{tangential impact}, which was incorporated into rigid body theory \cite{Darboux1880,Keller1986}. 

But real progress was made by taking some elasticity into account (\emph{contact regularisation}) \cite{McClamroch1989, Dupont1997, Brogliato1999, Stewart2000, Zhao2015, Champneys2016, BlumenthalsBrogliatoBertails2016, Hogan2017, Kristiansen2018}. Here the rod is assumed to be rigid, but the rough surface is taken to be elastic. Now the friction-induced couple at the moving rod tip drives the {\emph{surface}} down until the rod, still in contact with the surface, stops sliding. With the couple no longer acting, the surface rebounds and the rod lifts off. That work was done for the $2D$ problem. Its extension to the $3D$ problem we leave to further work.

%\vskip6pt

%\clearpage   
\appendix
\section{Coefficients \texorpdfstring{$Q_i, A_i, d_i, c_i \; (i=1,2)$}{Q1...}}
\label{sec:appA}
Expressions for the coefficients $Q_i, A_i, d_i, c_i \; (i=1,2)$ in \cref{eq:polareqs_varphi}, in terms of the \textit{relative slip angle} $\varphi=\beta-\psi$:
\begin{equation}\label{eq:polareqs_varphi_terms}
\begin{aligned}
Q_1(\theta,\varphi ; \mu, \alpha)&= \alpha \cos \theta \sin \varphi \left( \mu \cos \theta \sin \varphi -\sin \theta\right)- (1+\alpha)\mu \\
Q_2(\theta,\varphi  ; \mu, \alpha)&= \alpha \cos \theta \cos \varphi \left( \mu \cos \theta \sin \varphi -\sin \theta\right)\\
A_1(\Psi,\Theta,\theta,\varphi)&= \, \left(\Psi ^{2}  \cos^{2}  \theta   + \Theta ^{2}\right) \cos\theta  \sin \varphi\\
A_2(\Psi,\Theta,\theta,\varphi)&= \, \left(\Psi ^{2}  \cos^{2}  \theta   + \Theta ^{2}\right) \cos\theta  \cos \varphi\\
%p(\theta,\varphi ; \mu, \alpha)&=&   1+\alpha+\alpha\sin\theta\left(\mu\cos\theta\sin\varphi-\sin\theta\right) \\
%b(\Psi,\Theta,\theta)&=& \, \left(\Psi ^{2}  \cos^{2}  \theta   + \Theta ^{2}\right) \sin\theta - 1\\
d_1(\theta,\varphi; \mu, \alpha)&=-\alpha \mu \sec{\theta}\cos{\varphi}\\
d_2(\theta,\varphi; \mu, \alpha)&=-\alpha \mu \sin{\theta}\sin{}\varphi -\alpha \cos \theta \\
c_1(\Psi,\Theta,\theta)&=2 \Psi\Theta\tan\theta\\
c_2(\Psi,\Theta,\theta)&=- \Psi^2\sin\theta\cos\theta.
\end{aligned}
\end{equation}

\section{Bifurcations of the geometry}\label{sec:geobifs}
In \cref{fig:mechbifs}, we demonstrate the bifurcations in the geometry of the surfaces $p=0$ and $b=0$ for fixed $\Psi$ with changes in the variable $\Psi$ and the parameter $\mu$. 
\begin{figure}
    \centering
    \begin{overpic}[width=0.81\textwidth]{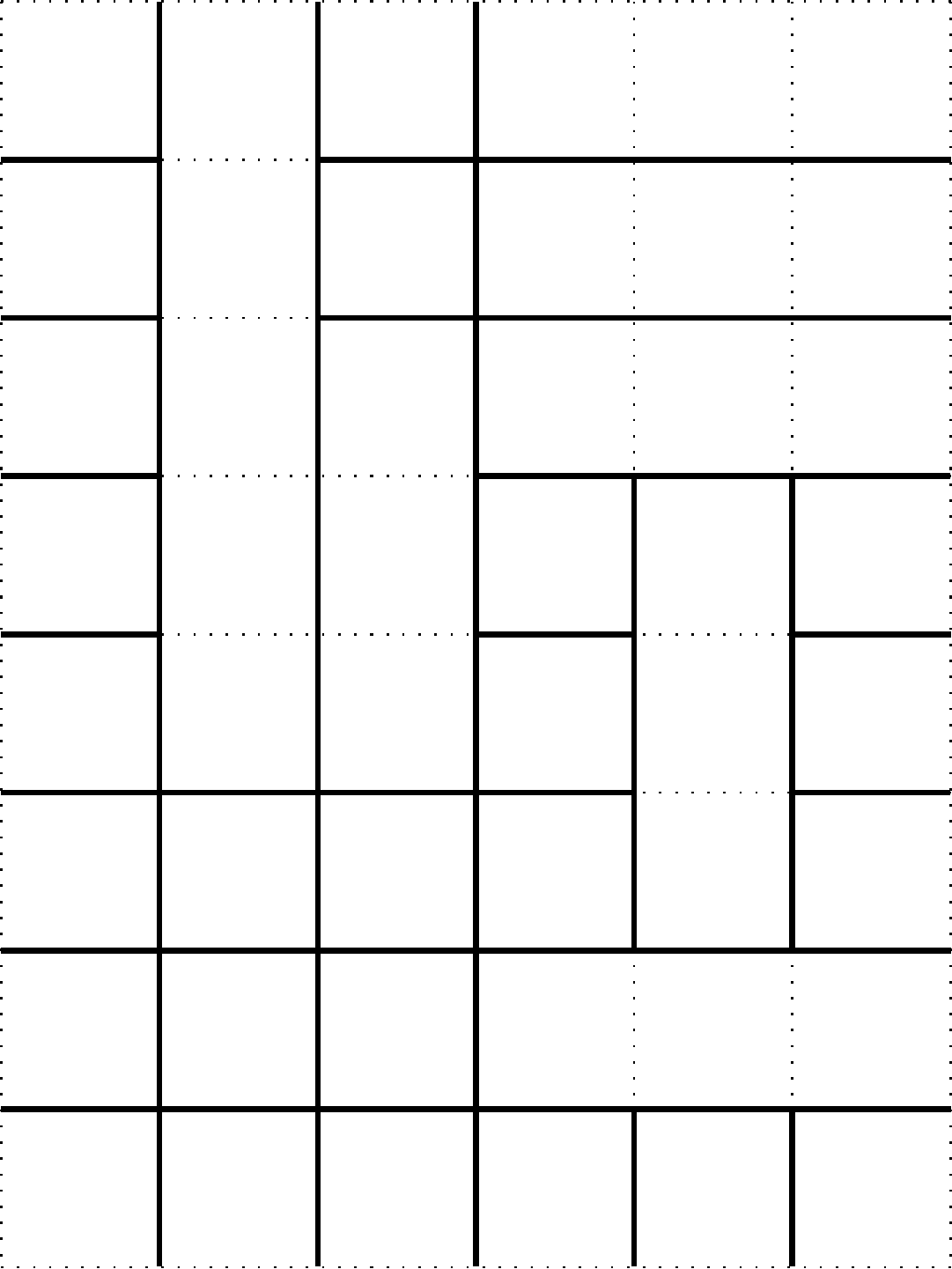}
    \put(13.3,13.1){\includegraphics[width=0.12\textwidth]{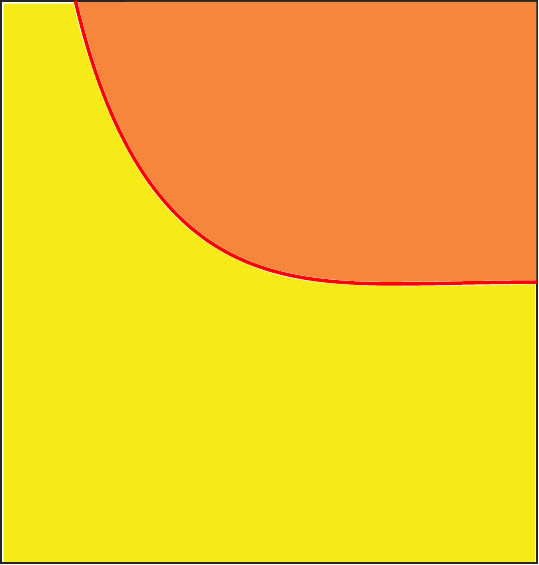}}
    \put(13.3,63){\includegraphics[width=0.12\textwidth]{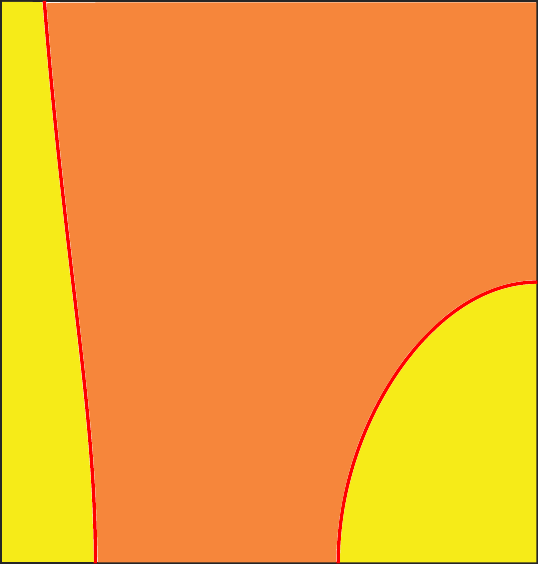}}
    \put(50.6,13.1){\includegraphics[width=0.12\textwidth]{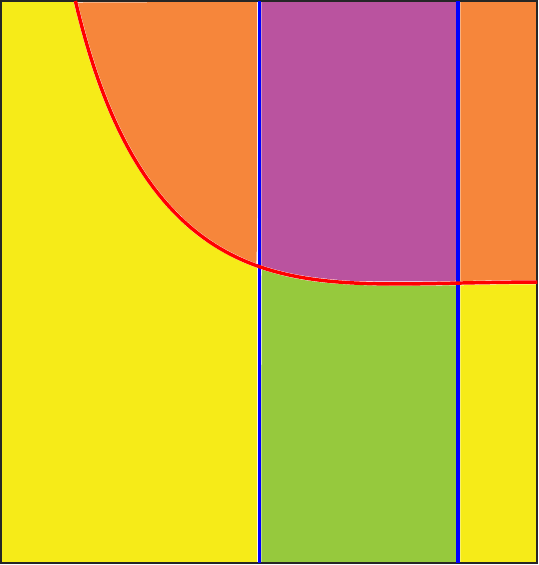}}
    \put(38,38){\includegraphics[width=0.12\textwidth]{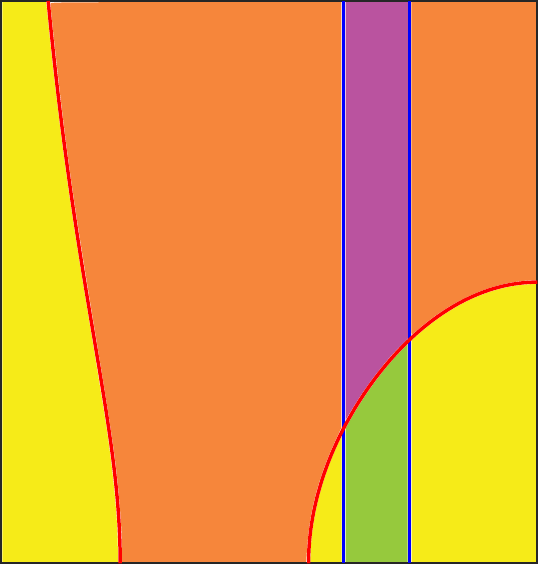}}
    \put(50.6,63){\includegraphics[width=0.12\textwidth]{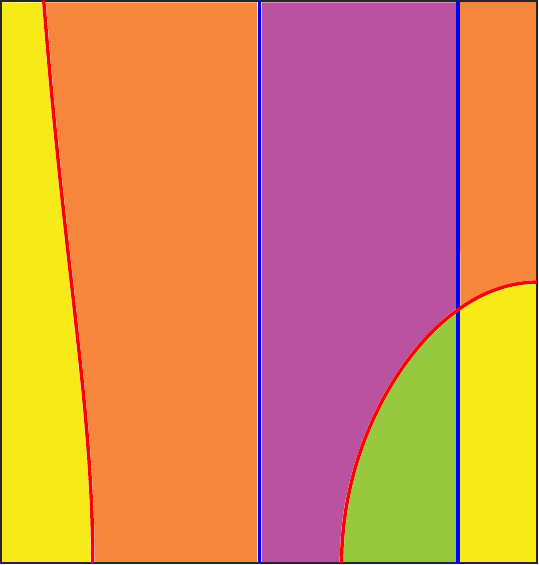}}
    \put(50.6,88){\includegraphics[width=0.12\textwidth]{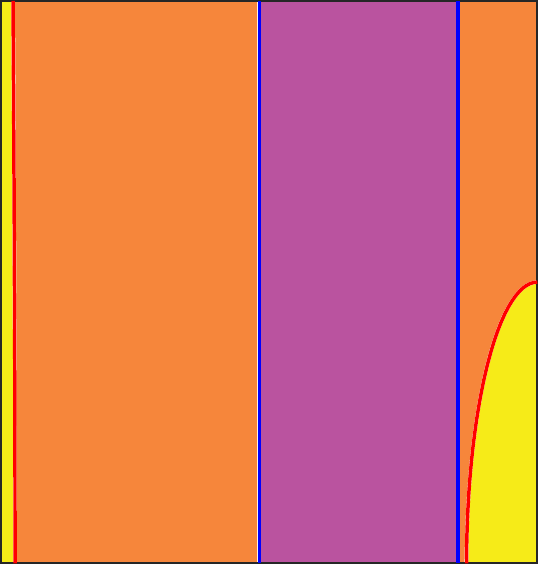}}
    \put(63,38){\includegraphics[width=0.12\textwidth]{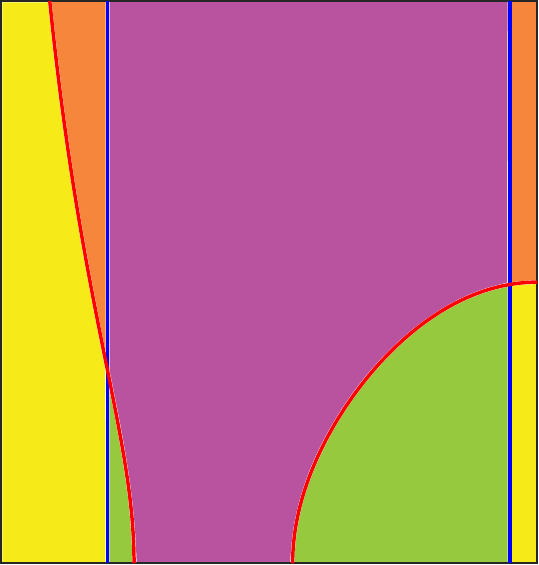}}
    \put(13.3,25.5){\includegraphics[width=0.12\textwidth]{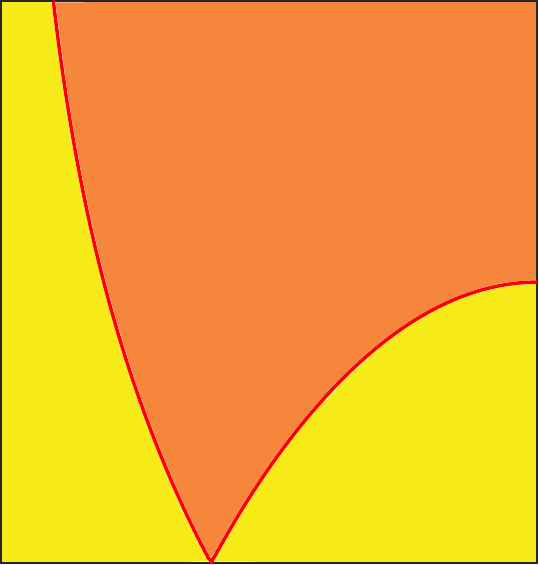}}
    \put(25.8,13.1){\includegraphics[width=0.12\textwidth]{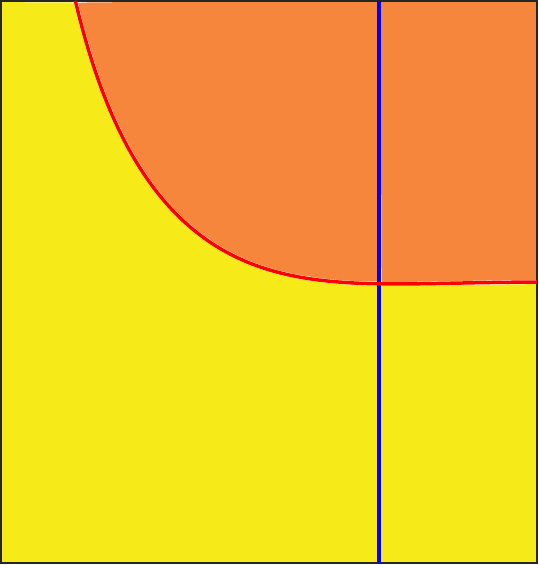}}
    \put(25.8,25.5){\includegraphics[width=0.12\textwidth]{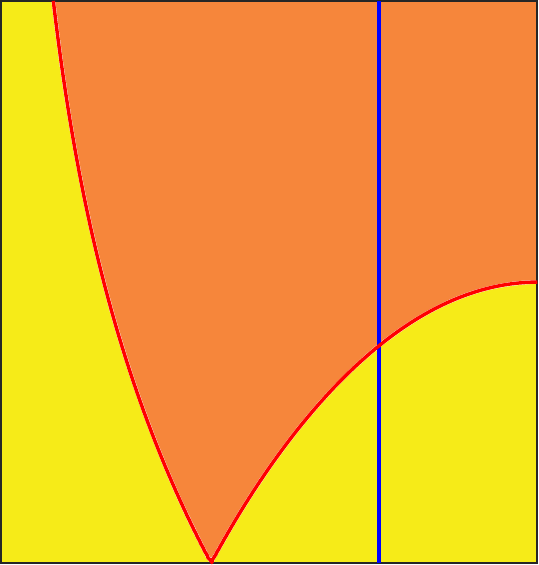}}
    \put(25.8,50.4){\includegraphics[width=0.12\textwidth]{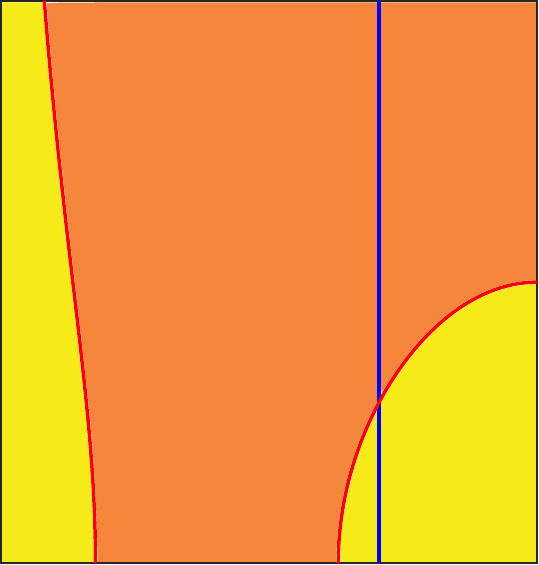}}
    \put(25.8,75.4){\includegraphics[width=0.12\textwidth]{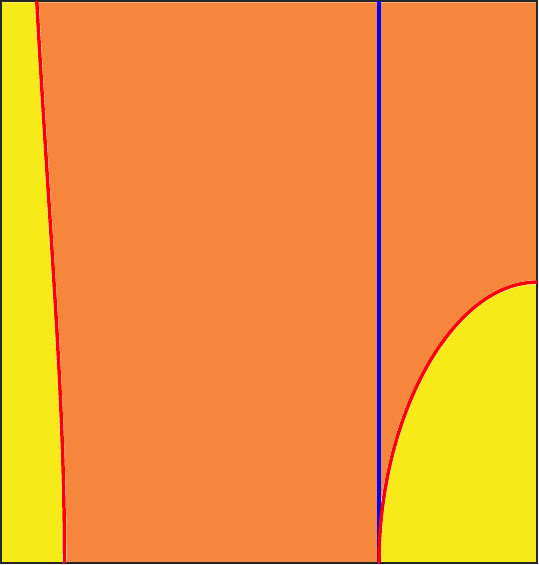}}
    \put(25.8,88){\includegraphics[width=0.12\textwidth]{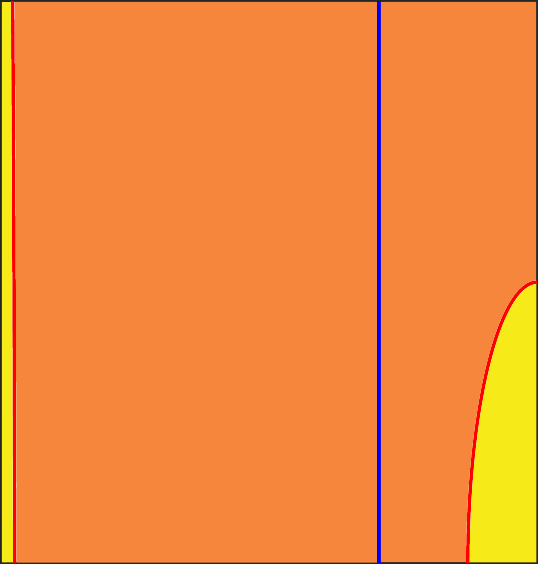}}
    \put(38,25.5){\includegraphics[width=0.12\textwidth]{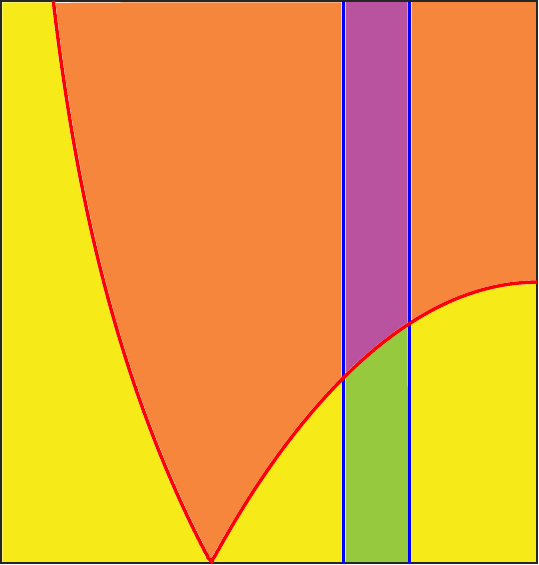}}
    \put(38,50.4){\includegraphics[width=0.12\textwidth]{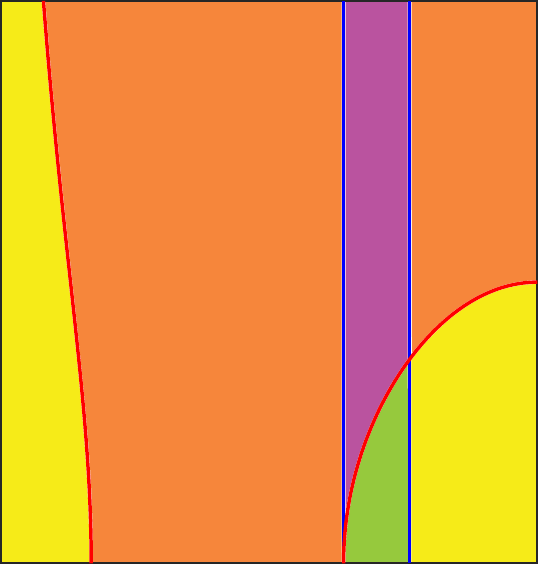}}
    \put(50.5,75.4){\includegraphics[width=0.12\textwidth]{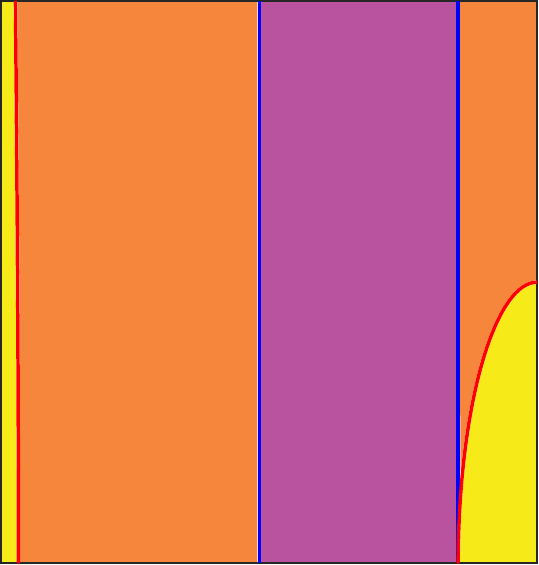}}
    \put(50.6,38){\includegraphics[width=0.12\textwidth]{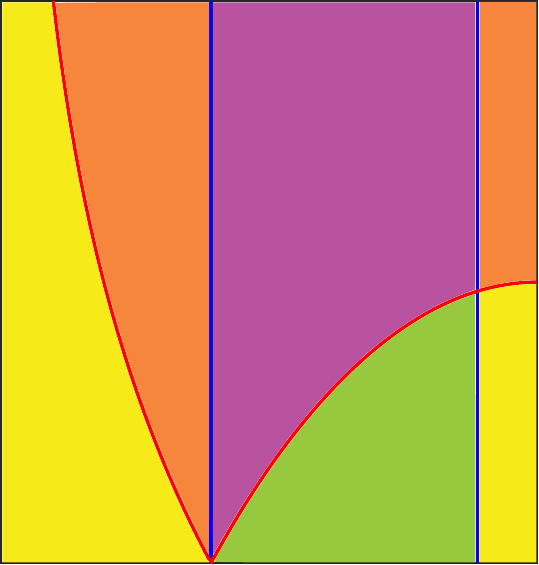}}
    \put(63,25.5){\includegraphics[width=0.12\textwidth]{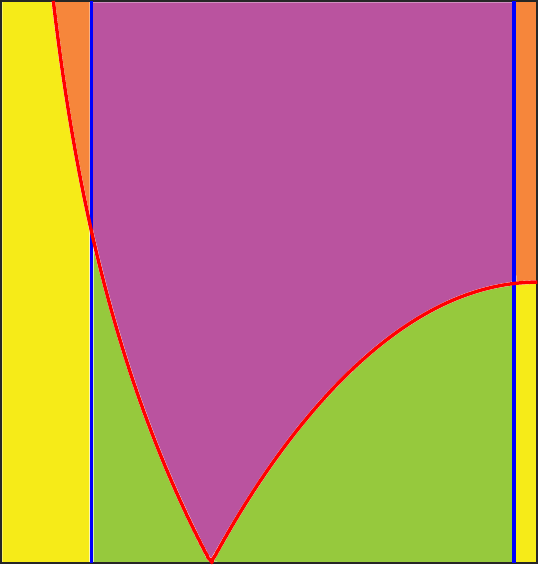}}
    \put(63,50.4){\includegraphics[width=0.12\textwidth]{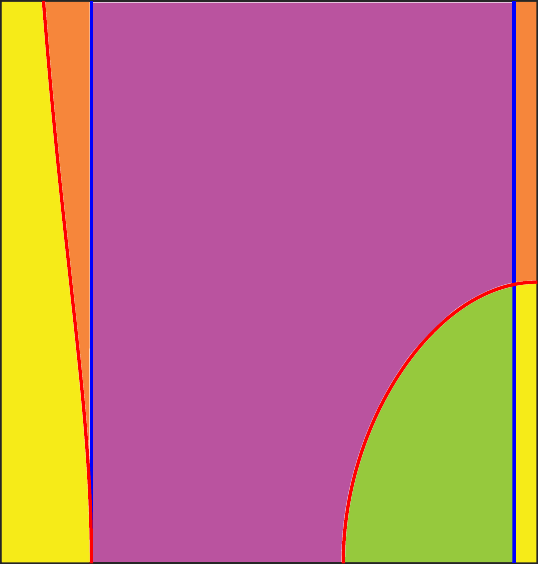}}
    \put(13.3,5){$\mu\in[0,\mu_\mathrm{P})$}
    \put(25.8,5){$\mu=\mu_\mathrm{P}$}
    \put(38,5){$\mu\in[\mu_\mathrm{P},\mu_\mathrm{L})$}
    \put(50.5,5){$\mu=\mu_\mathrm{L}$}
    \put(63,5){$\mu>\mu_\mathrm{L}$}
    \put(0,18.5){$|\Psi|\in[0,\Psi_\mathrm{L})$}
    \put(0,31.5){$|\Psi|=\Psi_\mathrm{L}$}
    \put(0,43.5){$|\Psi|\in[\Psi_\mathrm{L},\Psi_1)$}
    \put(0,55.5){$|\Psi|=\Psi_1$}
    \put(0,68.5){$|\Psi|\in[\Psi_1,\Psi_2)$}
    \put(0,80.4){$|\Psi|=\Psi_2$}
    \put(0,93.5){$|\Psi|>\Psi_2$}
    \put(13.5,14){\circled{1}}
    \put(21.3,72.5){\circled{2}}
    \put(50.8,14){\circled{3}}
    \put(40.9,39){\circled{4}}
    \put(70.5,39){\circled{5}}
    \put(52.5,72.5){\circled{6}}
    \put(51.5,88.9){\circled{7}}
    \end{overpic}
    \caption{Here we show all possible codimension-1 and codimension-2 bifurcations between cases in $(\mu,\Psi)$ space. This figure accompanies \cref{fig:mechanisms,fig:muPsicases}.}
    \label{fig:mechbifs}
\end{figure}

\bibliographystyle{siamplain}

\bibliography{library_paper}
\end{document}